\newcommand{\addresseshere}{%
  \enddoc@text\let\enddoc@text\relax
}
\definecolor{darkgreen}{rgb}{0.33, 0.42, 0.18}
\newtheorem {theorem}{Theorem}[section]
\newtheorem {lemma}[theorem]{Lemma}
\newtheorem {proposition}[theorem]{Proposition}
\newtheorem {corollary}[theorem]{Corollary}
\newtheorem {definition}[theorem]{Definition}
\newtheorem {question}[theorem]{Question}
\theoremstyle{remark}
\newtheorem {remark}[theorem]{Remark}
\newtheorem {example}[theorem]{Example}
\def\Z {{\mathbb{Z}}}
\def\Q {{\mathbb{Q}}}
\def\CP{\mathbb{CP}^2}
\def\bCP{\overline{\mathbb{CP}^2}}
\def\Arf{\operatorname{Arf}}
\def\Wo{W^{\circ}}
\newcommand{\mK} {\mkern3mu \overline{\mkern-3mu K \mkern-1mu} \mkern1mu}  
\def\F{\mathbb{F}}
\def\sigmaTL{\sigma_{\operatorname{LT}}}
\def\del{\partial}
\newcommand\intB{\smash{\mathring{B}^4}}
\newcommand\intD{\smash{\mathring{D}^2}}
\def\Span{\operatorname{Span}}
\begin{document}

\title{From zero surgeries to candidates for exotic definite four-manifolds}

\author[Ciprian Manolescu]{Ciprian Manolescu}
\address {Department of Mathematics, Stanford University\\
Stanford, CA 94305}
\email {\href{mailto:cm5@stanford.edu}{cm5@stanford.edu}}
\thanks {CM was partially supported by NSF grant DMS-2003488 and a Simons Investigator award.}

\author[Lisa Piccirillo]{Lisa Piccirillo}
\address{Department of Mathematics, Massachusetts Institute of Technology\\
Cambridge, MA 02139}
\email{\href{mailto:piccirli@mit.edu}{piccirli@mit.edu}}
\thanks{LP was partially supported by NSF postdoctoral fellowship DMS-1902735 and the Max Planck Institute for Mathematics} 

\begin{abstract}
One strategy for distinguishing smooth structures on closed $4$-manifolds is to produce a knot $K$ in $S^3$ that is slice in one smooth filling $W$ of $S^3$ but not slice in some homeomorphic smooth filling $W'$. In this paper we explore how $0$-surgery homeomorphisms can be used to potentially construct exotic pairs of this form. In order to systematically generate a plethora of candidates for exotic pairs, we give a fully general construction of pairs of knots with the same zero surgeries. By computer experimentation, we find $5$ topologically slice knots such that, if any of them were slice, we would obtain an exotic four-sphere. We also investigate the possibility of constructing exotic smooth structures on $\#^n \CP$ in a similar fashion. 
\end{abstract}

\maketitle

\section{Introduction}
Ever since the work of Freedman \cite{Freedman} and Donaldson \cite{Donaldson}, the following strategy for disproving the smooth $4$-dimensional Poincar\'e conjecture has garnered interest: Find a homotopy $4$-sphere $W$ and a knot $K \subset S^3=\partial(W \setminus \intB)$ which bounds a smoothly embedded disk in $W \setminus \intB$ but which is not slice (in $B^4$). It would then follow that $W$ is homeomorphic but not diffeomorphic to $S^4$. This strategy has a technical advantage over directly distinguishing $W$ from $S^4$ by computing some diffeomorphism invariant for $W$: there are no known diffeomorphism invariants for homotopy spheres, but there is an invariant (Rasmussen's $s$ invariant from \cite{Rasmussen}) which could obstruct $K$ in the above strategy from being slice in $B^4$. 

In \cite{FGMW}, Freedman, Gompf, Morrison and Walker explicitly attempted this strategy; for one homotopy 4-sphere from the literature they found  a knot $K$ which bounds a smooth disk in the homotopy sphere, and tried to use Rasmussen's $s$ invariant to show that $K$ is not slice in $B^4$. The $s$ invariant is known to be zero for slice knots, but (unlike other similar invariants \cite{OStau, km}) it is unknown whether $s$ necessarily vanishes when $K$ bounds a smooth disk in a homotopy $4$-ball. For the example in \cite{FGMW}, however, the $s$ invariant was $0$ and in fact the homotopy $4$-sphere was proven almost immediately to be standard \cite{akbulutCS}. 

Given that the strategy above seems to be the only presently tractable approach to the smooth $4$-dimensional Poincar\'e conjecture, it is of marked interest to pursue it more systematically.  The goal of this paper is to develop constructions of homotopy spheres $W$ which come equipped with a knot $K$ which bounds a smoothly embedded disk in $W \setminus \intB$ but which does not appear slice. Our constructions are broad in scope, but can also produce homotopy spheres and knots which are simple enough to be studied explicitly, a process we also begin here.

We use pairs $K$ and $K'$ with the same $0$-surgeries to produce such examples as follows: If $K$ is slice and $S^3_0(K)\cong S^3_0(K')$, then by gluing the complement of the slice disk for $K$ to the trace of the $0$-surgery for $K'$, we obtain a homotopy $4$-sphere $W$, such that $K'$ bounds a disk in $W \setminus \intB$. (See Problem 1.19 in \cite{KirbyList}.) If $s(K') \neq 0$, then $K'$ is not slice and $W$ is an exotic four-sphere.


In principle, the same idea can be used to produce examples of exotic smooth structures on $\#^n \CP$ for $n \geq 1$. The work of Freedman \cite{Freedman} and Donaldson \cite{Donaldson} implies that every simply-connected, positive definite, smooth, closed $4$-manifold is homeomorphic to $\#^n \CP$ for some $n$. It is unknown if $W=\#^n \CP$ admits exotic smooth structures. Let $\Wo = W \setminus \intB$ and define a knot to be {\em H-slice in $W$} it bounds a smoothly embedded nullhomologous disk in $\Wo$. If we found knots $K$ and $K'$ such that 
\[
S^3_0(K)\cong S^3_0(K'), \ \ \ K \text{ is H-slice in }W,   \ \ \ K' \text{ is not H-slice in }W,
\]
we would produce an exotic smooth structure on $W$. Note that we could obstruct $K'$ from being H-slice in $\#^n \CP$ by showing that Rasmussen's $s$ invariant satisfies $s(K') < 0$; see \cite{MMSW}.  

Techniques for constructing pairs of knots with homeomorphic $n$-surgeries first appeared in the late 70's, see \cite{Lickorish1, Akb2Dhom, Lickorish2}; for $n=0$, see  \cite{Brakes}. Other fundamentally distinct constructions were given in \cite{Osoinach} and \cite{Yasui}. Some of these constructions always produce knots such that not only are the 0-surgeries homeomorphic, but in fact the traces are diffeomorphic \cite{Akb2Dhom, Lickorish2, Brakes}. Other constructions sometimes produce knots with diffeomorphic traces \cite{AJOT}. Producing knots with diffeomorphic traces is useful for some purposes, for example for the proof that Conway's knot is not slice \cite{Pic2}. But pairs of knots with the same trace are not useful for obtaining exotic structures in the manner described above, as the trace embedding lemma (\cite{FoxMilnor}, see Lemma \ref{lem:TEL}) readily implies that if $K$ is H-slice in some manifold $W$, then so is $K'$.

In this paper, in order to produce the broadest possible selection of candidates for exotic homotopy spheres built using 0-surgery homeomorphisms we give a fully general framework for constructing pairs of knots with homeomorphic $0$-surgeries. Our framework is based on $3$-component links of the following form. 

\begin{definition} 
\label{def:RBG}
An \emph{RBG link} $L=R\cup B\cup G \subset S^3$ is a 3-component rationally framed link, with framings  $r, b, g$ respectively, such that $H_1(S^3_{r,b,g}(R\cup B\cup G);\mathbb{Z})=\mathbb{Z}$, together with homeomorphisms $\psi_B:S^3_{r,g}(R\cup G)\to S^3$ and $\psi_G:S^3_{r,b}(R\cup B)\to S^3$.
\end{definition}

\begin{theorem}
\label{thm:RBG}
Any RBG link $L$ has a pair of associated knots $K_B$ and $K_G$ and homeomorphism $\phi_L:S^3_0(K_B)\to S^3_0(K_G)$. Conversely, for any 0-surgery homeomorphism $\phi:S^3_0(K)\to S^3_0(K')$ there is an associated RBG link $L_\phi$ with $K_B=K'$, $K_G=K$, and $\phi_L = \phi$. 
\end{theorem}

We will explain how particular cases of RBG links recover other constructions from the literature, such as annulus twisting, dualizable patterns, and Yasui's construction. Moreover, there is a straightforward condition on the homeomorphism $\phi$ that guarantees that it does not extend to a {diffeomorphism} of the corresponding traces.\footnote{This condition is necessary to avoid building homotopy spheres which are immediately diffeomorphic to $S^4$; this was overlooked in  \cite{AJOT} and \cite{AbeTange}.}

In order to build homotopy spheres via 0-surgery homeomorphisms such that the accompanying  knots $K'$  are simple enough to study en masse, we study the following special type of RBG links: We take $R$ to be an $r$-framed knot with $r\in \Z$, and $B$ and $G$ to be $0$-framed unknots with linking number $l$ such that $l=0$ or $rl=2$. Further, letting $\mu_R$ denote a meridian for $R$, we ask that there exist link isotopies
$$R \cup B \cong R \cup \mu_R \cong R \cup G.$$ 
We call RBG links of this form {\em special}. Special RBG links are easy to draw, and suffice to produce many examples of knots with the same $0$-surgeries where the corresponding traces are not  even
homeomorphic. Further, special RBG links can produce pairs where both knots are very low crossing number. For example, we produce pairs $K$ and $K'$ with $12$ and $14$ crossings, respectively; to our knowledge this minimizes $c(K)+c(K')$ among all pairs of knots with homeomorphic 0-surgeries in the literature. See Example \ref{ex:smallrbg}. 

Using the computer programs \emph{SnapPy} \cite{SnapPy}, \emph{KnotTheory\!\`{}} \cite{KnotTheory}, \emph{SKnotJob} \cite{SKnotJob}, and the {\em Knot Floer homology calculator} \cite{HFK}, we investigated a $6$-parameter family consisting of $3375$ special RBG links. This yielded $21$ interesting pairs $(K, K')$ for which $S^3_0(K) \cong S^3_0(K')$,  $s(K') = -2 \neq 0$, and for which we could not determine whether $K$ is slice. Shortly after the original version of this paper was posted to the arXiv, Nathan Dunfield and Sherry Gong informed us that, using the twisted Alexander polynomial obstructions from \cite{HKL}, they were able to prove that 16 of our 21 knots are not slice \cite{DG, DG2}. Thus, we are left with $5$ knots $K$ with the following property.

\begin{theorem}
\label{thm:21}
If any of the $5$ knots shown in Figure~\ref{fig:21a} are slice, then an exotic four-sphere exists.
\end{theorem}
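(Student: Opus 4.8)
The plan is to assemble the general mechanism already sketched in the introduction into a proof for these five specific knots, feeding in the computational fact that each comes with a $0$-surgery partner of nonzero Rasmussen invariant. Fix one of the five knots and call it $K$; by the computer search and Theorem~\ref{thm:RBG} it is equipped with a companion knot $K'$, a $0$-surgery homeomorphism $\phi\colon S^3_0(K)\to S^3_0(K')$, and the computed value $s(K')=-2$. Suppose, toward producing a nonstandard sphere, that $K$ is slice, so it bounds a smooth disk $D\subset B^4$.

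First I would build the candidate homotopy sphere. Let $X=B^4\setminus\nu(D)$ be the exterior of the slice disk, so $\partial X=S^3_0(K)$, and let $X_0(K')$ be the $0$-trace of $K'$, with $\partial X_0(K')=S^3_0(K')$; glue them along $\phi$ to form the closed oriented $4$-manifold $W=X\cup_\phi X_0(K')$. I then check that $W$ is a homotopy $4$-sphere. On homology this is a Mayer--Vietoris computation: $X$ has the integral homology of $S^1$, generated by the meridian $\mu$ of $D$, while $X_0(K')$ has the homology of $S^2$, and the $0$-framing together with $\phi$ arranges that these classes cancel, leaving $H_*(W)\cong H_*(S^4)$. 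For the fundamental group, van Kampen gives $\pi_1(W)=\pi_1(X)/\langle\!\langle\,\mathrm{im}\,\pi_1(S^3_0(K))\,\rangle\!\rangle$, since $\pi_1(X_0(K'))=1$; as $\pi_1(X)$ is normally generated by the boundary meridian $\mu$, this quotient is trivial. Hence $W$ is a simply connected homology $4$-sphere, and Freedman's solution to the topological Poincar\'e conjecture in dimension four shows $W$ is homeomorphic to $S^4$.

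Next I would exhibit the disk and extract the contradiction. By construction $X_0(K')$ embeds smoothly in $W$, so the trace embedding lemma (Lemma~\ref{lem:TEL}) shows $K'$ is H-slice in $W$; that is, $K'$ bounds a smooth disk in $\Wo$. If $W$ were diffeomorphic to $S^4$, then $\Wo\cong B^4$ and an H-slice disk there is just a slice disk, forcing $K'$ to be slice. But $s$ vanishes on slice knots while $s(K')=-2\neq0$, a contradiction. Thus $W$ is homeomorphic but not diffeomorphic to $S^4$, i.e. an exotic four-sphere.

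The genuinely difficult work lies upstream and is already supplied by the rest of the paper: producing, through the special RBG construction, a knot $K$ whose sliceness is undecided together with a partner $K'$, and computing $s(K')=-2$ reliably. Within the present argument the only delicate point is the simple-connectivity of $W$ --- confirming that the $0$-surgery truly kills the $\pi_1$ contributed by the slice-disk exterior --- which I would pin down by tracking the boundary meridian $\mu$ through $\phi$; the remaining homological bookkeeping and the appeals to Freedman and to the $s$-invariant are routine.
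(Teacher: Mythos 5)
Your proposal is correct and follows essentially the same route as the paper: the paper's proof simply invokes Lemma~\ref{lem:slicehomeo} (whose proof is exactly your gluing $B^4\setminus\nu(D)$ to the trace of $K'$, with the same Mayer--Vietoris/van Kampen and Freedman steps) and then derives the contradiction from $s(K')=-2$. The only cosmetic differences are that the paper glues an upside-down $X(-K')$ and exhibits the H-slice disk for $K'$ directly (product annulus plus the $2$-handle core) rather than citing the trace embedding lemma, which sidesteps the small orientation bookkeeping your version leaves implicit.
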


\captionsetup[subfigure]{font=normalsize, labelformat=empty}
\begin{figure}[htb]
\centering
\subfloat[$K_1$]{\includegraphics[scale=.15]{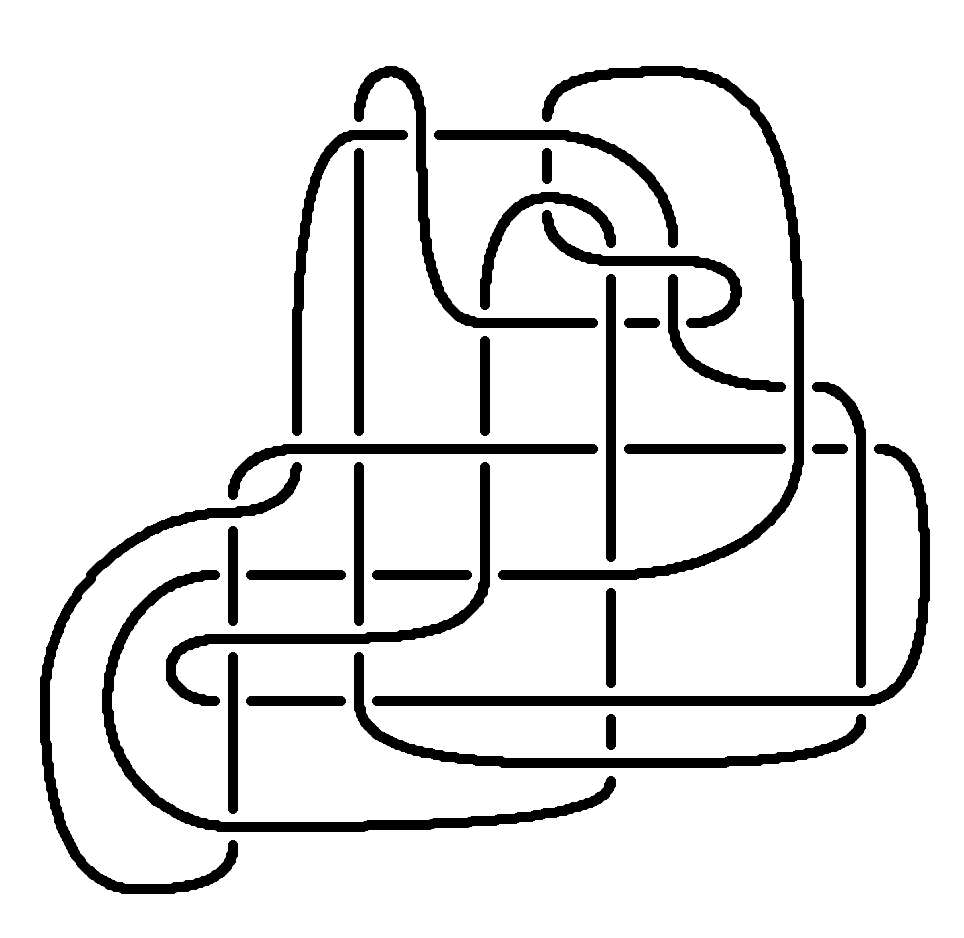}}\ \ \
\subfloat[$K_2$]{\includegraphics[scale=.08]{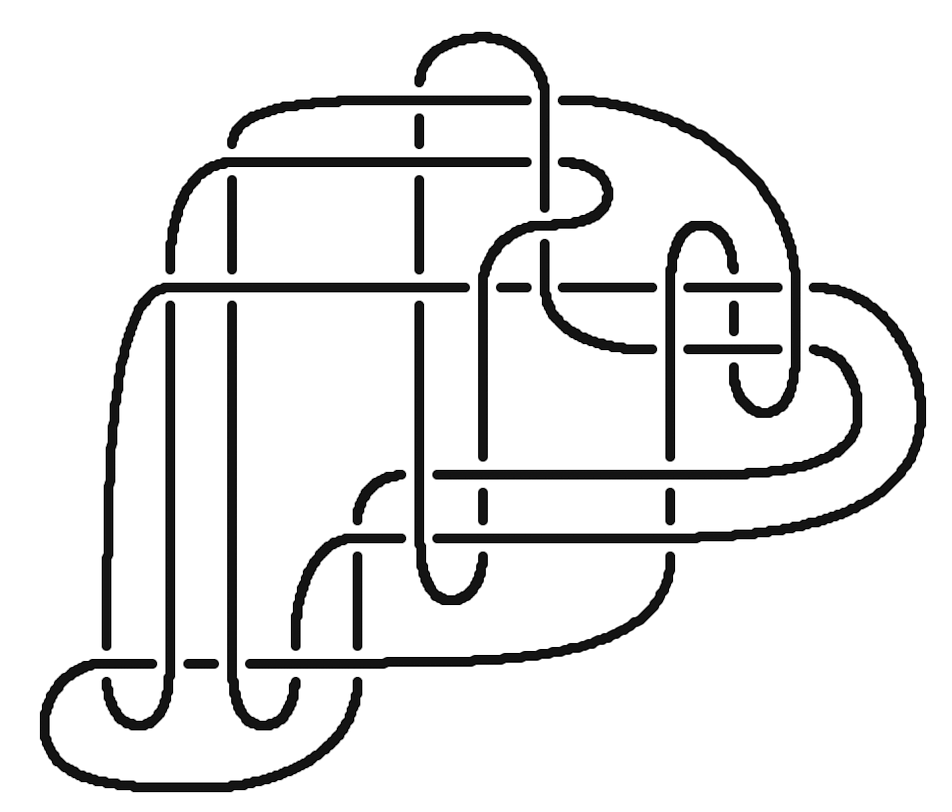}}\ \ \
\subfloat[$K_3$]{\includegraphics[scale=.08]{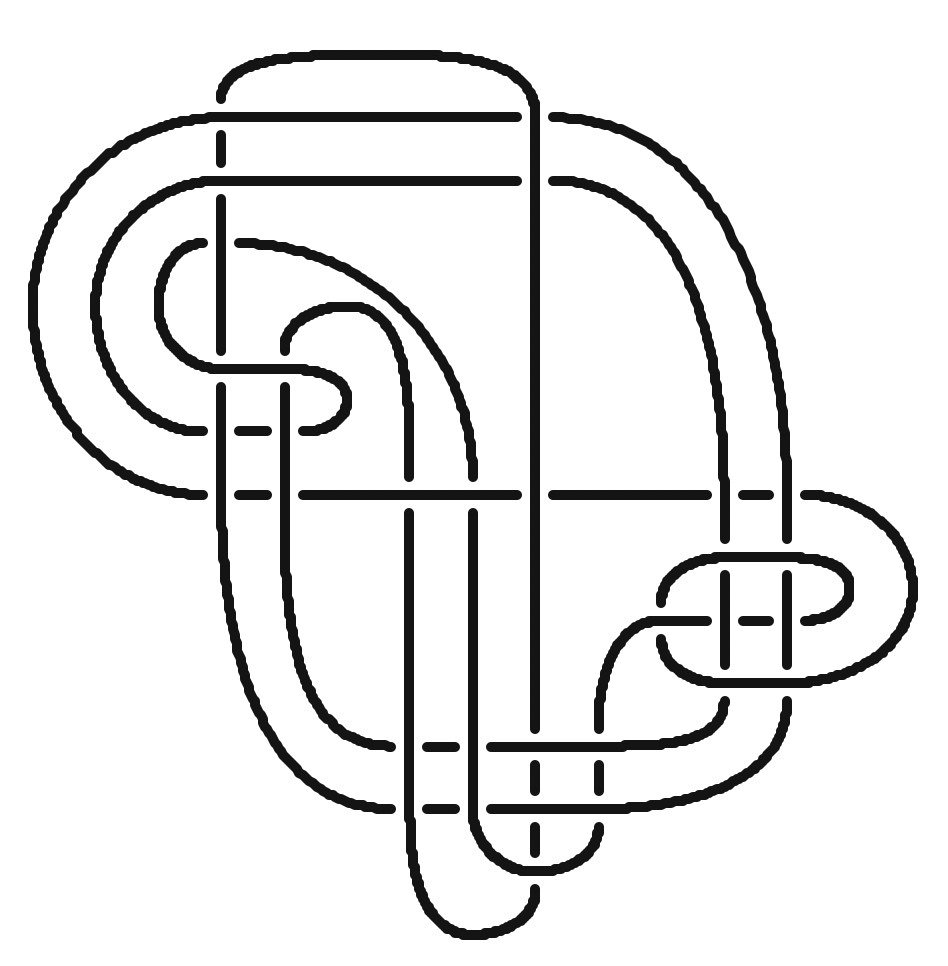}}\ \ \
\subfloat[$K_4$]{\includegraphics[scale=.08]{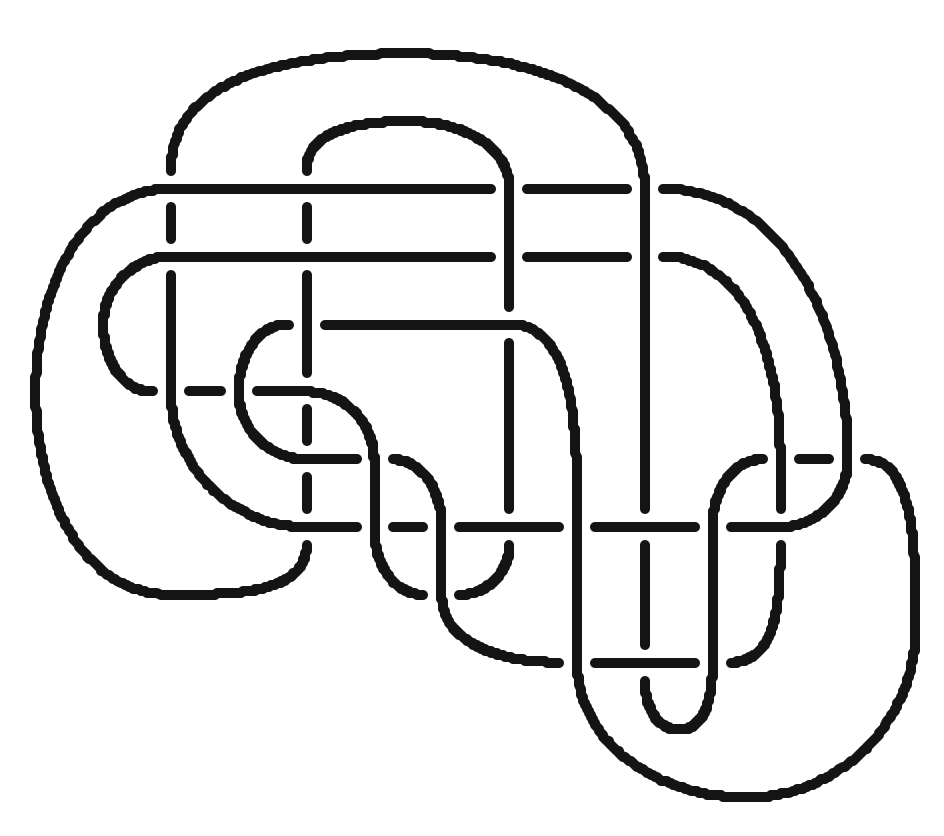}}\ \ \
\subfloat[$K_5$]{\includegraphics[scale=.08]{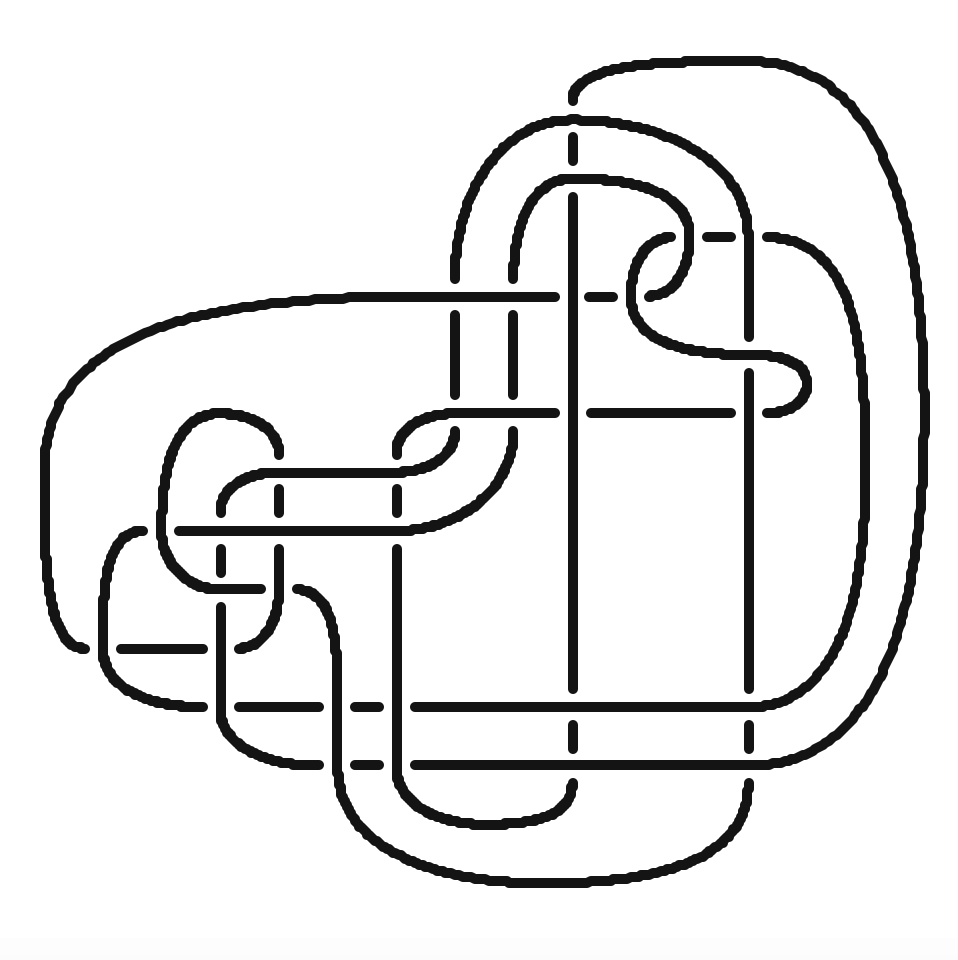}}
\caption{Candidates for slice knots.}
\label{fig:21a}
\end{figure}

We have verified that these knots pass many of the known obstructions to sliceness. Specifically:
\begin{itemize}
\item They have Alexander polynomial 1, and are therefore topologically slice by \cite{Freedman}. 
\item Their $\tau$, $\epsilon$ and $\nu$ invariants from knot Floer homology vanish;
\item Rasmussen's $s$ invariant equals zero;
\item The variants $s^{\F_2}$ and $s^{\F_3}$ of Rasmussen's invariant (from Khovanov homology over the fields $\F_2$ and $\F_3$) also vanish;
\item The Lipshitz-Sarkar $\operatorname{Sq}^1$ $s$-invariants vanish;
\item For at least $3$ of these knots ($K_1$, $K_4$ and $K_5$), the given homeomorphism $\phi: S^3_0(K) \to S^3_0(K')$ does not extend to a trace diffeomorphism, so the non-sliceness of $K'$ does not immediately obstruct $K$ from being slice. 
\end{itemize}

The other $16$ knots from our original list are denoted $K_6$ through $K_{21}$ and shown in Figure~\ref{fig:2}. They are algebraically but not topologically slice, and satisfy
\begin{equation}
\label{eq:sliceinvariants}
\tau=\epsilon=\nu=s=s^{\F_2}=s^{\F_3}=s^{\operatorname{Sq}^1} =0.
\end{equation}
This leaves open the possibility that they could lead to exotic structures on $\#^n \CP$. Moreover, our computer experiments produced two other knots ($K_{22}$ and $K_{23}$ in Figure~\ref{fig:2}) which are not even algebraically slice (they fail the Fox-Milnor condition on the Alexander polynomial), but have vanishing Levine-Tristram signature function, satisfy \eqref{eq:sliceinvariants}, and have a companion knot $K'$ with $s(K')=-2< 0$. These two knots are additional candidates for producing exotic smooth structures on $\#^n \CP$. 

\begin{theorem}
\label{thm:23}
If any of the $23$ knots shown in Figures~\ref{fig:21a} and \ref{fig:2} are H-slice in $\#^n \CP$ for some $n$, then an exotic $\#^n \CP$ exists.
\end{theorem}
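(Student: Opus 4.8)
The plan is to run, uniformly over all $23$ pairs, the relative version of the trace-gluing construction from the introduction, using Rasmussen's invariant of the companion knot to force exoticness. Fix one of the $23$ knots; call it $K$. It is one member of a pair arising from an RBG link, so Theorem~\ref{thm:RBG} supplies a companion $K'$ and a homeomorphism $\phi\colon S^3_0(K)\to S^3_0(K')$, and the computer calculations give $s(K')=-2$ in every case. The only external obstruction I need is \cite{MMSW}: any knot that is H-slice in the standard $\#^n\CP$ has $s\ge 0$, so $s(K')=-2<0$ shows $K'$ is \emph{not} H-slice in $\#^n\CP$ for any $n$.

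I would then build a competitor manifold. Suppose, aiming to rule out standardness, that $K$ is H-slice in $\#^n\CP$, witnessed by a nullhomologous disk $D\subset \#^n\CP\setminus\intB$. Because $D$ is nullhomologous its normal framing is the $0$-framing, so a tubular neighborhood of $D$ together with the deleted ball is exactly the trace $X_0(K)$ of the $0$-surgery; thus Lemma~\ref{lem:TEL} gives a smooth embedding $X_0(K)\hookrightarrow\#^n\CP$. Let $C$ be the complement, so $\partial C\cong S^3_0(K)$, and set
\[
W'=C\cup_{\phi}X_0(K'),
\]
regluing the $0$-trace of the companion along $\phi$. The other direction of Lemma~\ref{lem:TEL}, applied to $X_0(K')\subset W'$, shows that $K'$ is H-slice in $W'$.

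The heart of the argument is to show $W'$ is homeomorphic to $\#^n\CP$. Both $X_0(K)$ and $X_0(K')$ are simply connected, with $H_2\cong\Z$ generated by a capped Seifert surface of square zero. Since $\pi_1(X_0(K'))=1$, van Kampen gives $\pi_1(W')=\pi_1(C)/\langle\langle\operatorname{im}\pi_1(\partial C)\rangle\rangle$, which is the same quotient that computes $\pi_1(\#^n\CP)=1$, as $\phi$ merely relabels $\partial C$. A Mayer--Vietoris comparison then identifies $H_*(W')$ with $H_*(\#^n\CP)$: $\phi$ is a homeomorphism, hence sends the meridional generator of $H_1(S^3_0(K))$ to that of $H_1(S^3_0(K'))$, and the two traces are homologically indistinguishable, so the square-zero trace class contributes nothing new and the intersection form of $W'$ is the positive-definite $n\langle 1\rangle$ carried by $C$. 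As $W'$ is smooth its Kirby--Siebenmann invariant vanishes, so Freedman's classification \cite{Freedman} gives $W'\cong\#^n\CP$.

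Finally, if $W'$ were \emph{diffeomorphic} to the standard $\#^n\CP$, then $K'$ would be H-slice there, contradicting $s(K')=-2<0$ via \cite{MMSW}; hence $W'$ is an exotic $\#^n\CP$, and the same argument applies verbatim to each of the $23$ knots. I expect the step requiring the most care to be the homeomorphism $W'\cong\#^n\CP$: one must pin down orientation conventions so that $W'$ is genuinely positive definite (so the \cite{MMSW} bound applies with the correct sign), confirm that regluing the square-zero trace leaves $b_2$ unchanged (no spurious $S^2\times S^2$ summand appears), and check that the H-slice disk produced for $K'$ is indeed nullhomologous, which is automatic once the intersection form is positive definite.
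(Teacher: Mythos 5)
Your proposal is correct and follows essentially the same route as the paper: the paper's proof of Theorem~\ref{thm:23} invokes Lemma~\ref{lem:slicehomeo}, whose proof is exactly your cut-and-reglue construction (excise the exterior of the H-slice disk to get $V=C$ with $\partial V\cong S^3_0(K)$, glue the companion's trace back along $\phi$, check simple connectivity and the homology type, and apply Whitehead/Freedman), and then concludes with the obstruction $s(K')=-2<0$ from \cite{MMSW}. The one caveat is the orientation bookkeeping you already flag: with the paper's conventions the embedded piece is $X(-K)$ rather than $X_0(K)$ (cf.\ Lemma~\ref{lem:TEL}), and correspondingly the paper glues in $X(-K')$, but this does not affect the substance of the argument.
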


By contrast, one can show that all $23$ of these knots are H-slice in $\#^n \bCP$ for some $n > 0$. Knots that are H-slice in both $\#^n \bCP$ and $\#^n \CP$ (for some $n$) are called \emph{biprojectively H-slice}, or \emph{BPH-slice}. BPH-slice knots have vanishing Levine-Tristram signature function, and satisfy $\tau=\epsilon=s=0$; cf. \cite{CochranHarveyHorn}, \cite{MMSW}. We observe in Section \ref{sec:BPH} that many of the small knots for which these invariants vanish can be shown to be BPH-slice. 

So far we have focused on pairs of knots $(K, K')$ with the same $0$-surgery, for which we know that $K'$ is not slice (or not H-slice in $\#^n \CP$), and we are unsure about $K$. We could also look at pairs $(K, K')$ with the same $0$-surgery for which we know that $K$ is slice (or H-slice in $\#^n \CP$), and $s(K')=0$. (If $s(K')\neq 0$, this would be a different paper.) There are plenty of such examples coming from special RBG links or from annulus twisting. In some situations, we know that $K'$ is in fact slice, and in others we are not sure. In either case, interesting homotopy $4$-spheres can be constructed using the RBG link $L_\phi$ from a  homeomorphism $\phi: S^3_0(K) \to S^3_0(K')$. The challenge then becomes to determine whether these homotopy 4-spheres are standard. In Figure~\ref{fig:htpy88} we exhibit an explicit infinite family of examples of homotopy 4-spheres constructed by this method.

\begin{remark}
\label{rem:nakamura}
 After our paper was posted on the arXiv, Nakamura \cite{Nakamura} showed that the knots in Figures~\ref{fig:21a} and \ref{fig:2} are not H-slice in $\#^n \CP$ for any $n$ (and in particular not slice). Thus, they cannot be used to produce an exotic $S^4$ or $\#^n \CP$. More generally, he proved that the $s$-invariant cannot be used to construct an exotic $\#^n \CP$ by starting from a special RBG link where the $R$ component is the unknot. Furthermore, Nakamura also showed that the homotopy 4-spheres from Figure~\ref{fig:htpy88} are standard. Nevertheless, the methods developed in this paper could potentially still be used to find exotic $S^4$ or $\#^n \CP$, by considering either more general RBG links or other concordance invariants.
\end{remark}

\subsection{Organization of the paper} In Section~\ref{sec:BPH} we introduce BPH-slice knots and give examples. In Section~\ref{sec:RBG} we present the general RBG construction and prove Theorem~\ref{thm:RBG}, and discuss when 0-surgery homeomorphisms extend to trace homeomorphisms or diffeomorphisms. In Section~\ref{sec:smallspecial} we restrict attention to special RBG links, and introduce a concept (small RBG links) that ensures the resulting diagrams for $K_B$ and $K_G$ are manageable. In Section~\ref{sec:computer} we describe our computer experiments, and explain how we arrived at the knots  in Figures~\ref{fig:21a} and \ref{fig:2}. In Section~\ref{sec:annulus} we review how annulus twisting gives rise to 0-surgery homeomorphisms, and give examples of homotopy 4-spheres arising from this construction. Finally, in Section~\ref{sec:otherconstr} we relate RBG links to other known ways to produce 0-surgery homeomorphisms: annulus twisting, Yasui's construction, and dualizable patterns.

\subsection{Conventions}
All manifolds are smooth and oriented and all homeomorphisms are orientation preserving. Boundaries are oriented with outward normal first. Slice refers to the existence of a smooth disk, and topologically slice to that of a locally flat disk. Homology has integral coefficients. The symbol $\nu$ denotes a tubular neighborhood, and $U$ denotes the unknot.

\subsection{Acknowledgements.} We would like to thank the organizers of the CRM 50th Anniversary Program in Low-Dimensional Topology (Montr\'eal, 2019), where this collaboration started. We thank Dror Bar-Natan, Kyle Hayden, Chuck Livingston, Marco Marengon, Allison Miller and Qianhe Qin for helpful comments on previous versions of this paper. In particular, we are grateful to Kyle Hayden for pointing out that some of the homotopy $4$-spheres we previously considered were in fact standard, and to Nathan Dunfield and Sherry Gong for checking that some of the knots in our list were not topologically slice. We are also grateful to the referee for the careful reading of our paper.


\section{BPH-slice knots}
\label{sec:BPH}
Let $W$ be a closed, smooth, oriented $4$-manifold. We let $\Wo := W \setminus \intB$.
\begin{definition}
We say that a knot $K \subset S^3$ is {\em H-slice in $\Wo$} if it bounds a smooth, properly embedded disk $\Delta$ in $\Wo$, such that $[\Delta]=0\in H_2(\Wo, \del \Wo).$ For convenience, we will also sometimes use the terminology {\em H-slice in $W$} to mean H-slice in $\Wo$.
\end{definition}
Observe that if a knot is slice in the usual sense, then it is H-slice in any $W$.

\begin{remark}
It is shown in \cite[Corollary 1.5]{MMP} and that the set of H-slice knots can detect exotic smooth structures on some 4-manifold with indefinite intersection form. See \cite{MMP, IMT} for more obstructions to H-sliceness in such manifolds.
\end{remark}

Knots that are H-slice in some simply connected $4$-manifold with a positive definite (or  negative definite, resp.) intersection form are called {\em $0$-positive} ({\em $0$-negative}, resp.) in \cite{CochranHarveyHorn}. Note that $K$ is $0$-negative iff the mirror $\mK$ is $0$-positive. Several obstructions to $0$-positivity are collected in \cite[Proposition 1.1]{CochranHarveyHorn}. If $K$ is $0$-positive, then
\begin{itemize}
\item The signature of the knot satisfies $\sigma(K) \leq 0$;
\item More generally, the Levine-Tristram signature function $\sigmaTL(K)$ (evaluated away from the roots of the Alexander polynomial) is non-positive;
\item The Ozsv\'ath-Szab\'o concordance invariant satisfies $\tau(K) \geq 0$; cf \cite[Theorem 1.1]{OStau}.
\end{itemize}
There are additional obstructions from the Heegaard Floer correction terms of cyclic branched covers or $\pm 1$-surgeries on $K$, and from Yang-Mills theory; see \cite{CochranHarveyHorn}, \cite[Corollary 5.5]{km} and \cite[Theorem 4.1]{DaemiScaduto}.

When $W = \#^n \CP$, another obstruction comes from Khovanov homology: From \cite[Corollary 1.9]{MMSW}, it follows that if $K$ is H-slice in $\#^n \CP$ for some $n$, then Rasmussen's $s$ invariant satisfies
\begin{equation}
\label{eq:sless}
 s(K) \geq 0.
 \end{equation}
 
 \begin{remark}
Note that invariants such as $\sigma$, $\sigma_{LT}$ and $\tau$ behave in the same way with regard to H-slice knots in $\#^n \CP$ as in any other positive definite manifold, whereas for Rasmussen's invariant, the inequality \eqref{eq:sless} was only proved for H-slice knots in $\#^n \CP$. This is what makes the $s$ invariant of particular interest for the purpose of detecting exotic smooth structures on $\#^n \CP$ via H-sliceness.
 \end{remark}

Examples of knots that are H-slice in $ \#^n \CP$ can be easily constructed using the following well-known lemma.
\begin{lemma}
\label{lem:HCP}
Let $K, K' \subset S^3$ be knots such that $K'$ is obtained from $K$ by changing a negative crossing to a positive crossing in a diagram of $K$. If $K$ is H-slice in $\#^{n-1} \CP$, then $K'$ is H-slice in $\#^n \CP$.
\end{lemma}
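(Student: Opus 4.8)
The plan is to realize the crossing change by attaching a single $+1$-framed $2$-handle, so that the given disk for $K$ extends, across an embedded product annulus, to a nullhomologous disk for $K'$ in the enlarged manifold. The starting point is the standard description of a crossing change by surgery: there is an unknot $C\subset S^3\setminus K$ that encircles the two strands of the crossing, bounding a disk meeting $K$ in exactly two points of opposite sign (so that $C$ is nullhomologous in $S^3\setminus K$), with the property that $K$, viewed in $S^3\setminus C$, becomes $K'$ in the surgered manifold $S^3_{+1}(C)$. Since $C$ is unknotted, $S^3_{+1}(C)\cong S^3$. The one genuinely delicate point is the sign bookkeeping: one must check, with the orientation conventions of the paper, that it is precisely the $+1$-surgery (rather than the $-1$-surgery) that converts the given \emph{negative} crossing into a \emph{positive} one. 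This is the step I expect to be the main obstacle, and it can be pinned down by a consistency check, e.g. that the positive trefoil, obtained from the unknot by one negative-to-positive change, is H-slice in $\CP^\circ$ and carries $s=2\geq 0$, matching the $s\geq 0$ obstruction of \eqref{eq:sless}.

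Next I would promote this to four dimensions. Write $X^\circ=(\#^{n-1}\CP)^\circ$ and let $\Delta\subset X^\circ$ be the hypothesized disk with $\partial\Delta=K$ and $[\Delta]=0$. Attaching a $+1$-framed $2$-handle to $X^\circ$ along $C\subset\partial X^\circ=S^3$ is supported in a collar ball, so it produces the boundary connected sum $X^\circ\,\natural\,\CP^\circ=(\#^n\CP)^\circ=:Y^\circ$, introducing a new exceptional sphere $E$ with $[E]^2=+1$; the new boundary is $S^3_{+1}(C)\cong S^3$, in which $K$ has become $K'$. The trace of this handle attachment contains the product annulus $A=K\times I$ running from $K$ in the old boundary to $K'$ in $\partial Y^\circ$. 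Because $K$ is disjoint from $C$, this annulus is embedded, lies in the collar/handle region, and may be taken disjoint from $E$.

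Finally I would glue and check homology. Set $\Delta'=\Delta\cup_K A\subset Y^\circ$, an embedded disk with $\partial\Delta'=K'$; it remains to verify $[\Delta']=0\in H_2(Y^\circ,\partial Y^\circ)$. Since $\#^n\CP$ is simply connected with unimodular intersection form $\langle 1\rangle^{\oplus n}$, a relative class vanishes precisely when it pairs trivially with each generator $e_1,\dots,e_n$ of $H_2(Y^\circ)$. For the new generator $e_n=[E]$ this holds because $\Delta'$ was arranged disjoint from $E$; for each old generator $e_i$ with $i<n$ it holds because $A$ lies in the collar/handle region away from the old $\CP$-lines, so $[\Delta']\cdot e_i=[\Delta]\cdot e_i=0$ by the hypothesis $[\Delta]=0$. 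Hence $[\Delta']=0$, and $K'$ is H-slice in $\#^n\CP$. The key structural feature that makes the conclusion nullhomologous (rather than carrying a $\pm2[E]$ term, as a naive double-point resolution would) is that no interior double point is ever created: the crossing change is realized by the genuinely embedded product annulus in the surgered collar, which is disjoint from the exceptional sphere.
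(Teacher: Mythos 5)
Your argument is essentially the paper's proof: both realize the negative-to-positive crossing change by attaching a $+1$-framed $2$-handle along an unknot encircling the two strands (producing the cobordism $\CP\setminus(\intB\sqcup\intB)$, with the new boundary identified with $S^3$ by blowing down), and both cap the given nullhomologous disk with the embedded product annulus, checking triviality in $H_2(\,\cdot\,,\partial)$ via intersection numbers. The only cosmetic difference is that you assert the annulus can be made geometrically disjoint from the exceptional sphere, whereas all that is needed (and all that is immediate from $\operatorname{lk}(K,C)=0$) is that the algebraic intersection number vanishes; either way the conclusion follows.
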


\begin{figure}

{
   \fontsize{9pt}{11pt}\selectfont
   \def\svgwidth{3in}
\begingroup%
  \makeatletter%
  \providecommand\color[2][]{%
    \errmessage{(Inkscape) Color is used for the text in Inkscape, but the package 'color.sty' is not loaded}%
    \renewcommand\color[2][]{}%
  }%
  \providecommand\transparent[1]{%
    \errmessage{(Inkscape) Transparency is used (non-zero) for the text in Inkscape, but the package 'transparent.sty' is not loaded}%
    \renewcommand\transparent[1]{}%
  }%
  \providecommand\rotatebox[2]{#2}%
  \newcommand*\fsize{\dimexpr\f@size pt\relax}%
  \newcommand*\lineheight[1]{\fontsize{\fsize}{#1\fsize}\selectfont}%
  \ifx\svgwidth\undefined%
    \setlength{\unitlength}{208.16441345bp}%
    \ifx\svgscale\undefined%
      \relax%
    \else%
      \setlength{\unitlength}{\unitlength * \real{\svgscale}}%
    \fi%
  \else%
    \setlength{\unitlength}{\svgwidth}%
  \fi%
  \global\let\svgwidth\undefined%
  \global\let\svgscale\undefined%
  \makeatother%
  \begin{picture}(1,0.41871695)%
    \lineheight{1}%
    \setlength\tabcolsep{0pt}%
    \put(0,0){\includegraphics[width=\unitlength,page=1]{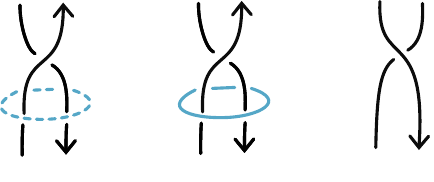}}%
    \put(0.13528679,0.27414406){\color[rgb]{0,0,0}\makebox(0,0)[lt]{\lineheight{1.25}\smash{\begin{tabular}[t]{l}$c$\end{tabular}}}}%
    \put(0.04362443,0.0136739){\color[rgb]{0,0,0}\makebox(0,0)[lt]{\lineheight{1.25}\smash{\begin{tabular}[t]{l}$\partial^-X$\end{tabular}}}}%
    \put(0.18884707,0.12229624){\color[rgb]{0.30196078,0.63921569,0.76862745}\makebox(0,0)[lt]{\lineheight{1.25}\smash{\begin{tabular}[t]{l}$\gamma$\end{tabular}}}}%
    \put(0.46337704,0.0115969){\color[rgb]{0,0,0}\makebox(0,0)[lt]{\lineheight{1.25}\smash{\begin{tabular}[t]{l}$\partial^+X$\end{tabular}}}}%
    \put(0.83087507,0.0115969){\color[rgb]{0,0,0}\makebox(0,0)[lt]{\lineheight{1.25}\smash{\begin{tabular}[t]{l}$\partial^+X\cong S^3$\end{tabular}}}}%
    \put(0.61399181,0.12229624){\color[rgb]{0.30196078,0.63921569,0.76862745}\makebox(0,0)[lt]{\lineheight{1.25}\smash{\begin{tabular}[t]{l}$1$\end{tabular}}}}%
    \put(0,0){\includegraphics[width=\unitlength,page=2]{crossingchange.pdf}}%
  \end{picture}%
\endgroup%

}
\caption{An annular cobordism in $\CP \setminus (\intB \sqcup \intB)$ from $K$ to $K'$}
\label{fig:crossingchange}
\end{figure}

\begin{proof}
Let $c$ be a negative-to-positive crossing change in a diagram of $K$ which yields $K'$.  Consider $K\subset S^3\times \{0\}\subset S^3\times I$ and attach a 1-framed 2-handle to $S^3\times \{1\}$ along a curve $\gamma$ which links $c$ as in Figure \ref{fig:crossingchange}. This handle attachment yields the cobordism $X=\CP \setminus (\intB \sqcup \intB)$ and there is a natural nullhomologous annular cobordism from $K\subset \partial^-X$ to the knot $K^+\subset \partial^+X$ depicted in the center frame of Figure \ref{fig:crossingchange}. When we identify $\partial^+X$ with the standard diagram of $S^3$ (via, say, a Rolfsen twist) as in the right frame of Figure \ref{fig:crossingchange}, we can identify $K^+$ as $K'$. The claim follows by stacking $X$ on top of $\#^{n-1} \CP$.
\end{proof}

More generally, the conclusion of Lemma~\ref{lem:HCP} also holds when $K'$ is obtained from $K$ by adding a generalized positive crossing in the sense of  \cite[Definition 2.7]{CochranTweedy}.

The following concept will be of particular interest to us.
\begin{definition}
A knot $K \subset S^3$ is called {\em biprojectively H-slice} (or {\em BPH-slice}) if it is H-slice in both $\#^n \CP$ and $\#^n \bCP$, for some $n \geq 0$.
\end{definition}
In \cite{CochranHarveyHorn}, knots that are both $0$-positive and $0$-negative are called $0$-bipolar. BPH-slice knots are $0$-bipolar. Moreover, note that every simply connected, positive definite, smooth closed $4$-manifold is homeomorphic to $\#^n \CP$ for some $n$, and there are no known exotic smooth structures on such manifolds. Thus, $0$-bipolar and BPH-slice might be the same notion.

By applying the obstructions above for both $K$ and $\mK$, we see that for BPH-slice knots they become equalities instead of inequalities. Therefore, if $K$ is BPH-slice then:
$$\sigma(K)=0, \ \ \sigmaTL(K)=0, \ \ \tau(K) = 0, \ \ s(K)=0.$$
Further, Hom's $\epsilon$ invariant from knot Floer homology \cite{Hom} also has to vanish for BPH-slice knots; see \cite[Proposition 4.10]{CochranHarveyHorn}.

Slice knots are BPH-slice. We see that many of the obstructions that vanish for ordinary slice knots also vanish for BPH-slice knots. The main difference is the Fox-Milnor condition on the Alexander polynomial, which does not need to hold for BPH-slice knots.

The following is an immediate consequence of Lemma~\ref{lem:HCP}. 
\begin{lemma}
\label{lem:ConstructBPH}
Suppose that $K \subset S^3$ is a knot with the following properties:
\begin{itemize}
\item There exists a diagram of $K$ and a negative crossing in that diagram, such that when we change it to a positive crossing, we get a BPH-slice knot;
\item There exists a (possibly different) diagram of $K$ and a positive crossing in that diagram, such that when we change it to a negative crossing, we get a BPH-slice knot.
\end{itemize}
Then, $K$ is BPH-slice.
\end{lemma}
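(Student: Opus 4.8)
The plan is to derive the two halves of the BPH-slice condition separately, each from one of the two hypotheses, by reading Lemma~\ref{lem:HCP} (and its mirror) in the reverse direction and then stabilizing.

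First I would handle H-sliceness in the $\CP$ summands. By the second hypothesis there is a diagram of $K$ containing a positive crossing whose change to a negative crossing produces a BPH-slice knot $J_2$. Read backwards, this says exactly that $K$ is obtained from $J_2$ by changing a negative crossing to a positive one. Since $J_2$ is BPH-slice it is in particular H-slice in $\#^{m}\CP$ for some $m$, so Lemma~\ref{lem:HCP}, applied with $J_2$ in the role of $K$ and $K$ in the role of $K'$, yields that $K$ is H-slice in $\#^{m+1}\CP$.

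Next I would handle the $\bCP$ summands using the mirror of Lemma~\ref{lem:HCP}. Taking mirror images turns a negative-to-positive crossing change into a positive-to-negative one, and converts H-sliceness in $\#^n\CP$ into H-sliceness in $\#^n\bCP$ (as $\overline{\#^n\CP}=\#^n\bCP$ and $K$ is H-slice in $W$ exactly when $\mK$ is H-slice in $\overline{W}$). Thus Lemma~\ref{lem:HCP} immediately gives its mirror: if a knot is obtained from a knot that is H-slice in $\#^{n-1}\bCP$ by a positive-to-negative crossing change, then it is H-slice in $\#^n\bCP$. By the first hypothesis there is a diagram of $K$ with a negative crossing whose change to positive gives a BPH-slice knot $J_1$, so equivalently $K$ arises from $J_1$ by a positive-to-negative crossing change. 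As $J_1$ is H-slice in $\#^{m'}\bCP$ for some $m'$, the mirror lemma gives that $K$ is H-slice in $\#^{m'+1}\bCP$.

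Finally I would combine the two conclusions. Setting $N=\max(m+1,m'+1)$ and invoking the elementary stabilization fact that H-sliceness in $\#^n\CP$ implies H-sliceness in $\#^{n+1}\CP$ (the nullhomologous disk includes into the larger connect sum, performed away from the disk, with its relative homology class remaining zero, and likewise for $\bCP$), we conclude that $K$ is H-slice in both $\#^N\CP$ and $\#^N\bCP$; that is, $K$ is BPH-slice. The only point requiring care — the main obstacle — is the orientational bookkeeping: Lemma~\ref{lem:HCP} \emph{adds} a $\CP$ summand precisely when a crossing is made more positive, so one must run it in the correct direction and pair the first hypothesis with the $\bCP$ conclusion and the second hypothesis with the $\CP$ conclusion. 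Once these crossing-change directions and the corresponding signs of the summands are matched, the argument is immediate.
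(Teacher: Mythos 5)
Your proof is correct and is essentially the argument the paper intends: the paper simply declares the lemma ``an immediate consequence of Lemma~\ref{lem:HCP},'' and your write-up supplies exactly the expected details (reading the crossing changes in reverse, applying Lemma~\ref{lem:HCP} and its mirror, and stabilizing to a common $N$), with the orientation bookkeeping done correctly.
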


\begin{example}
The simplest nontrivial BPH-slice knot is the figure-eight $4_1$. Its standard diagram (shown in  Figure~\ref{fig:41818}) has two negative and two positive crossings, and changing the sign of any of the crossings produces the unknot.
\end{example}

\begin{figure}

{
   \fontsize{9pt}{11pt}\selectfont
   \def\svgwidth{3.5in}
   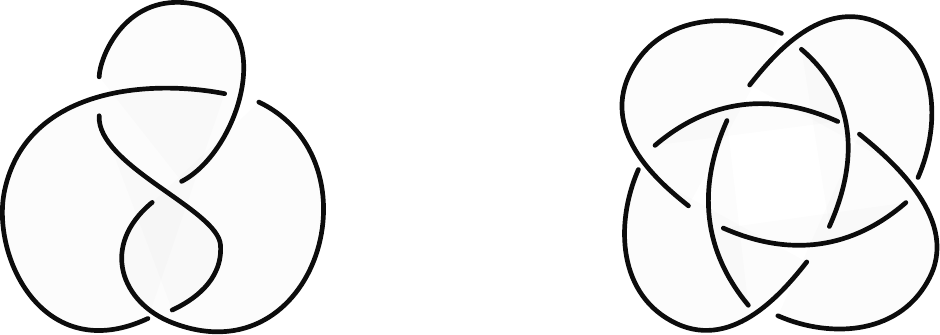
}
\caption{Left: the BPH-slice knot $4_1$. Right: The knot $8_{18}$, whose BPH-sliceness is unknown.}
\label{fig:41818}
\end{figure}

Using Lemma~\ref{lem:ConstructBPH}, we found that BPH-slice knots are significantly more common than slice knots among small knots. Indeed, most small knots with $\sigma=0$ are BPH-slice. Here is a list of all the prime knots with at most 9 crossings and $\sigma=0$:
\begin{itemize}
\item {\em slice knots}: $6_1, 8_8, 8_9, 8_{20}, 9_{27}, 9_{41}, 9_{46}$; 
\item {\em amphichiral non-slice knots}: $4_1, 6_3, 8_3, 8_{12}, 8_{17}, 8_{18}$;
\item {\em non-amphichiral, non-slice knots}: $7_7, 8_1, 8_{13}, 9_{14}, 9_{19}, 9_{24}, 9_{30}, 9_{33}, 9_{34}, 9_{37}, 9_{44}$.
\end{itemize}

Of these, all except $8_{18}$ can be shown to be BPH-slice by starting with the diagrams found in knot tables, checking which crossing changes produce smaller BPH-slice knots, and applying Lemma~\ref{lem:ConstructBPH}.

We could not determine if $8_{18}$ is BPH-slice. Changing any of the crossings in its minimal diagram (shown in  Figure~\ref{fig:41818}) gives a trefoil knot, whose signature is nonzero. Observe, however, that for the composite knot $3_1 \# (-3_1)$, it is also true that changing any crossing gives a trefoil; nevertheless, the knot is slice.

\section{A general RBG construction}
\label{sec:RBG}
In this section we give a fully general framework for describing homeomorphisms between manifolds arising as $0$-surgeries on knots. Using this framework we discuss when a 0-surgery homeomorphism can be extended to a trace homeomorphism or diffeomorphism. In Section \ref{sec:smallspecial} we will make some simplifying assumptions that lead to more user-friendly results and examples.

Our construction is based on certain three-component links, called {\em RBG links}, which generalize those already considered in \cite[Section 2]{Pic1}.

\subsection{RBG links}
Let $\vec{f}$ denote a finite ordered list with values in $\mathbb{Q}\cup\{\infty\}\cup\{*\}$. We will use the notation $\smash{S^3_{\vec{f}}(L)}$ to denote the $\smash{\vec{f}}$ surgery on a framed link $L$, where an $*$ denotes a complement (i.e., removing a neighborhood of that component and not filling it in). Given a 3-manifold homeomorphism $\phi:M\to N$ we will sometimes abusively still use $\phi$ to refer to a restriction of $\phi$ to some codimension zero submanifold of $M$.

{
\renewcommand{\thetheorem}{\ref{def:RBG}}
\begin{definition}
An \emph{RBG link} $L=R\cup B\cup G \subset S^3$ is a 3-component rationally framed link, with framings  $r, b, g$ respectively, such that $H_1(S^3_{r,b,g}(R\cup B\cup G);\mathbb{Z})=\mathbb{Z}$, together with homeomorphisms $\psi_B:S^3_{r,g}(R\cup G)\to S^3$ and $\psi_G:S^3_{r,b}(R\cup B)\to S^3$.
\end{definition}
\addtocounter{theorem}{-1}
}

For examples of RBG links, see Section~\ref{sec:smallspecial}.

{
\renewcommand{\thetheorem}{\ref{thm:RBG}}
\begin{theorem}
Any RBG link $L$ has a pair of associated knots $K_B$ and $K_G$ and homeomorphism $\phi_L:S^3_0(K_B)\to S^3_0(K_G)$. Conversely, for any 0-surgery homeomorphism $\phi:S^3_0(K)\to S^3_0(K')$ there is an associated RBG link $L_\phi$ with $K_B=K'$, $K_G=K$, and $\phi_L = \phi$. 
\end{theorem}
\addtocounter{theorem}{-1}
}

\begin{remark}
We fix particular homeomorphisms $\psi_B$ and $\psi_G$ in Definition~\ref{def:RBG} because for other  choices $\psi_B'$ and $\psi_G'$ (which are necessarily isotopic to $\psi_B$ and $\psi_G$, since there is a unique homeomorphism of $S^3$ up to isotopy) it is possible to produce homeomorphisms $\phi_L$ and $\phi_L'$ which are distinct \emph{up to isotopy}. This can be seen by choosing say $\psi_B'$ to be $\tau\circ\psi_B$ where $\tau$ is a homeomorphism of $S^3$ inducing a nontrivial symmetry of $K_B$.  

For the remainder of the paper, when we are only concerned with the existence of a homeomorphism $\phi_L$, rather than the particular isotopy class of the homeomorphism,  we will not reference the choices of $\psi_B$ and $\psi_G$.  
\end{remark}

\begin{proof}
For the first claim, define $(K_B, f_b)$ to be the framed knot in $S^3$ satisfying $\psi_B^*:S^3_{r,b,g}(L)\to S^3_{f_b}(K_B)$, where $\psi_B^*$ is induced from $\psi_B$ by pushing forward the framed knot $(B,b)$. Similarly take $(K_G, f_g)$ to be the framed knot satisfying $\psi_G^*:S^3_{r,b,g}(L)\to S^3_{f_g}(K_G)$. Then $\phi_L:= \psi_G^*\circ \psi_B^{*(-1)}$ is a homeomorphism $\phi_L:S^3_{f_b}(K_B)\to S^3_{f_g}(K_G)$; we define $\phi_L$ to be the homeomorphism associated to the RBG link.  The homology assumption on $S^3_{r,b,g}(L)$ implies $f_b=f_g=0$. 

For the second claim let $\mu_{K'}$ be the meridian for $K'$, and let $(R,r)$ be the framed curve given as the image of $(\mu_{K'}, 0)$ under the homeomorphism $\phi^{-1}$.  We will define our RBG link $L$ to be $R\cup K\cup \mu_R$, with $b=g=0$. The homeomorphism $\psi_B: S^3_{r,0,0}(R,K,\mu_R)\to S^3_0(K)$ is given by the slam dunk on $R$ and $\mu_R$.  Pushing $R$ and $\mu_R$ across $\phi$ induces a homeomorphism $\phi^*: S^3_{r,0,0}(R,K,\mu_R)\to S^3_{0,0,0}(\mu_{K'},K',\mu_{\mu_{K'}})$.  There is a natural slam dunk homeomorphism $s:S^3_{0,0,0}(\mu_{K'},K',\mu_{\mu_{K'}})\to S^3_0(K')$. Taking $\psi_G$ to be $s\circ \phi^*$, we have that the homeomorphism induced by $L$ is $\psi_G\circ\psi_B^{-1}$ which is isotopic to $\phi$.
\end{proof}

\begin{remark}
There can be many distinct RBG links producing the same $0$-surgery homeomorphism $\phi: S^3_0(K) \to S^3_0(K')$.
\end{remark}

There are three primary techniques presently in the literature for constructing a 0-surgery homeomorphism: dualizable patterns \cite{Akb2Dhom, BakerMotegi, Brakes, GM, Lickorish2}, annulus twisting \cite{Osoinach}, 
and Yasui's construction \cite{Yasui}. In Section \ref{sec:otherconstr} we will give explicit RBG links for each of these constructions. 

\subsection{Constructing H-slice knots and candidates for exotic pairs}
 In order to build knots $K'$ that are H-slice in an exotic copy of a simply connected four-manifold $W$, we will begin instead with a knot $K$ which is H-slice in $W$ and then construct a $K'$ with $S^3_0(K)\cong S^3_0(K')$. That $K'$ is then H-slice in a homotopy  $W$ follows from the following folklore:

\begin{lemma}\label{lem:slicehomeo}
Let $W$ be a smooth, closed, oriented, simply connected four-manifold. If there is a homeomorphism  $\phi:S^3_0(K)\to S^3_0(K')$ and $K$ is H-slice in $W$, then $K'$ is H-slice in a 4-manifold $X$ with the homotopy type of $W$. 
\end{lemma}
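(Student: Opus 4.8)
The plan is to build the homotopy-$W$ copy $X$ by cut-and-paste surgery along the zero-surgery $3$-manifold, exploiting the homeomorphism $\phi$ to glue together the ``good'' slice-disk side coming from $K$ with the trace side coming from $K'$. Since $K$ is H-slice in $W$, it bounds a smooth nullhomologous disk $\Delta$ in $W^\circ = W \setminus \intB$. Removing a tubular neighborhood $\nu(\Delta) \cong \Delta \times D^2$ of this disk from $W^\circ$ produces a manifold whose boundary is exactly $S^3_0(K)$: indeed, the boundary of $W^\circ$ is $S^3$, containing $K$, and performing the disk-complement construction replaces $\nu(K) = K \times D^2$ in the boundary with the $0$-framed surgery. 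Thus I would set $W_K := W^\circ \setminus \nu(\Delta)$, a smooth four-manifold with $\partial W_K \cong S^3_0(K)$.

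Next I would produce the complementary piece from $K'$. The trace $X_0(K')$ of the $0$-surgery on $K'$ is the four-manifold obtained by attaching a $0$-framed $2$-handle to $B^4$ along $K'$; its boundary is $S^3_0(K')$, and the cocore of this $2$-handle is a smooth nullhomologous disk in $X_0(K')$ bounded by $K' \subset S^3 = \partial B^4$. I would then form
\[
X := W_K \cup_\phi X_0(K'),
\]
gluing $\partial W_K \cong S^3_0(K)$ to $\partial X_0(K') \cong S^3_0(K')$ via the homeomorphism $\phi$ (or rather an orientation-reversing version so the orientations match up along the common boundary). By construction $X$ is a smooth, closed, oriented four-manifold, and $K'$ bounds the cocore disk inside $X$, which lives away from the gluing region; I would then need to check this disk is nullhomologous in $X$ to conclude that $K'$ is H-slice in $X$.

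The two remaining points are that $X$ is closed and that $X$ has the homotopy type of $W$. For closedness I would verify that the boundary pieces match up correctly under $\phi$, using that $W^\circ$ caps off to the closed manifold $W$ once the disk $\Delta$ is replaced. For the homotopy type, the key computation is via Mayer--Vietoris: the gluing is performed along $S^3_0(K)$, which has $H_1 = \Z$ and $H_2 = \Z$, and I would track how these classes die or survive when the two solid-torus-like pieces (the disk complement on one side, the $2$-handle on the other) are assembled. The nullhomologous hypothesis $[\Delta] = 0$ and the homology condition on the $0$-surgery should force the intersection form and fundamental group of $X$ to agree with those of $W$. Since $W$ is simply connected, van Kampen applied to the decomposition should give $\pi_1(X) = 1$, and then Freedman-style homotopy-theoretic arguments (or a direct CW-model comparison) identify the homotopy type.

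The main obstacle I anticipate is controlling the homotopy type of $X$ rather than merely its homology: getting $\pi_1(X) = 1$ requires checking that the meridian of $K'$ (which normally generates $\pi_1$ of the surgered manifold) becomes nullhomotopic after the gluing, and that no exotic $\pi_1$ is introduced along the $S^3_0(K)$ interface. Once simple-connectivity and the intersection form are pinned down, the statement ``homotopy type of $W$'' follows from the classification of simply connected four-manifolds up to homotopy by their intersection forms; the delicate step is verifying that the glued intersection form is genuinely isomorphic to that of $W$, which is where the nullhomologous condition on $\Delta$ and the precise effect of $\phi$ on $H_2(S^3_0(K))$ must be used carefully.
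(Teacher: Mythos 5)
Your construction is essentially the paper's: they set $V = W^\circ\setminus\nu(\Delta)$ (your $W_K$), form $X = X(-K')\cup_\phi V$, and exhibit the slice disk for $K'$ inside the trace piece after deleting an interior ball. Two small corrections to your set-up: the disk in the trace bounded by $K'$ is the \emph{core} of the $2$-handle (extended by a product annulus in the collar $S^3\times I$ once a ball is removed from the $0$-handle), not the cocore --- the cocore is bounded by the meridian $\mu_{K'}$; and there is no ``orientation-reversing version of $\phi$'' to invoke --- the correct bookkeeping is to reverse the orientation of the trace piece itself, i.e.\ to glue $X(-K')$ upside down along $-\partial X(-K')=S^3_0(K')$, which is exactly what the paper does.

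The substantive gap is in the fundamental group step, which you flag as the main obstacle but do not resolve, and which is where the real content of the lemma lies. The danger is not that ``exotic $\pi_1$ is introduced along the interface,'' nor that $\mu_{K'}$ fails to die (it bounds the cocore, so it dies for free): it is that $\pi_1(W_K)$ can itself be a nontrivial group, and you must show the gluing kills it. The missing idea is that, because $W^\circ$ is simply connected, $\pi_1(W_K)=\pi_1(W^\circ\setminus\nu(\Delta))$ is normally generated by the meridian of $\Delta$, hence by the image of $\pi_1(\partial W_K)$. Gluing on the trace amounts to attaching a single $2$-handle along $\phi^{-1}(\mu_{K'})\subset\partial W_K$ followed by a $4$-handle, so $\pi_1(X)=\pi_1(W_K)/\langle\langle\phi^{-1}(\mu_{K'})\rangle\rangle$; since $\pi_1(S^3_0(K'))/\langle\langle\mu_{K'}\rangle\rangle=\pi_1(S^3)=1$, the entire image of $\pi_1(\partial W_K)$ is killed in this quotient, and normal generation then forces $\pi_1(X)=1$. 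Without the normal-generation statement your van Kampen computation does not close up. The remaining steps (Mayer--Vietoris for the homology, where $[\Delta]=0$ is used, followed by the homotopy classification of simply connected $4$-manifolds via Whitehead's theorem) are routine and match the paper.
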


To prove the lemma, we require a definition. 
\begin{definition}
The \emph{trace} of a knot $K$, denoted $X(K)$, is the 4-manifold obtained by attaching a single 0-framed 2-handle to $B^4$ along $K$. 
\end{definition}
\begin{proof}[Proof of Lemma \ref{lem:slicehomeo}]
Since $K$ is H-slice in $W$, we can choose a slice disk for $K$ in $\Wo$ and consider the 4-manifold $V$ obtained by excising an open tubular neighborhood of that disk from $\Wo$. It is routine to confirm that $\partial V\cong S^3_0(K)$ and that $\pi_1(V)$ is normally generated by $\iota_*(\pi_1(\partial V))$, where $\iota: \partial V \to V$ is the inclusion.

Now consider the 4-manifold $$X:=X(-K')\cup_\phi V,$$ where $\phi$ is the assumed homeomorphism from $-\partial(X(-K')) = S^3_0(K')$ to $\partial V \cong S^3_0(K)$. Let $\mu_{K'}$ denote the meridian of $K'$ in $S^3_0(K')$. By thinking of gluing an upside down $X(-K')$ onto a rightside up $V$, we see that $X$ has a handle diagram obtained from that of $V$ by adding an additional 2-handle along the framed curve $\phi(\mu_{K'}, 0)$, followed by a 4-handle. As such, $\pi_1(X)=\pi_1(V)/\langle[\phi^{-1}(\mu_{K'})]\rangle$. Since $\pi_1(S^3_0(K'))/\langle[\mu_{K'}]\rangle=1$ we have  $\pi_1(\partial V)/\langle[\phi^{-1}(\mu_{K'})]\rangle=1$. Since $\pi_1(V)$ is normally generated by $\pi_1(\partial V)$, we have that $\pi_1(X)=1$. It is routine to confirm that $X$ has the homology type of $W$, and the intersection form on $H_2(X)$ and $H_2(W)$ coincide. It is then a consequence of Whitehead's Theorem (see \cite[p.103, Theorem 1.5]{MilnorSBF}  or \cite[Theorem 1.2.25]{GS}) that $X$ is homotopy equivalent to $W$. (In fact, $X$ is homeomorphic to $W$ by Freedman's theorem \cite{Freedman}.)

To see that $K'$ is H-slice in $X$, let $X^\circ(-K')$ denote $X(-K')$ with an open ball removed, and observe that $K'\subset S^4\subset \partial(X^\circ(-K'))$ bounds a disk $D$ in $X^\circ (-K')$ made up of the product cobordism in $S^3\times I$ and the core of the 2-handle. This slice disk survives into $X$, and it is again routine to check that $[D]=0\in H_2(X,\partial X)$.
\end{proof}

\subsection{Trace homeomorphisms}
To build an exotic copy of some closed simply connected four-manifold $W$, we will want to start with a knot $K$ that is H-slice in $W$ and construct a knot $K'$ with $S^3_0(K)\cong S^3_0(K')$ such that $K'$ is hopefully not $H$-slice in $W$. We now observe that the following lemma implies that it is unproductive to construct a $K'$ with the stronger property that $X(K)$ is diffeomorphic to $X(K')$:

\begin{lemma}[Trace Embedding Lemma, originally \cite{FoxMilnor}, cf. \cite{HP19} Lemma 3.3]\label{lem:TEL}
Let $W$ be a smooth, closed 4-manifold. Then $K\subset \partial S^3$ is H-slice in $W$ if and only if there is a smooth embedding of $X(-K)$ in $W$ which induces the 0-map on $H_2$.  
\end{lemma}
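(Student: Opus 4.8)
The plan is to prove the Trace Embedding Lemma by converting between the two geometric objects: a nullhomologous slice disk $\Delta$ for $K$ in $\Wo$, and a smooth embedding of the trace $X(-K)$ into $W$ inducing the zero map on $H_2$. The key structural observation is that the trace $X(-K)$ and the complement of a neighborhood of a slice disk fit together to reconstruct a neighborhood of $K$ and its meridian, so the two formulations are really two descriptions of the same decomposition of $W$.

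For the forward direction, suppose $K$ is H-slice in $W$, so it bounds a nullhomologous disk $\Delta \subset \Wo$. First I would take $X(-K)$ and observe that there is a natural way to embed it near $\Delta$: the disk $\Delta$ has a tubular neighborhood $\nu(\Delta) \cong \Delta \times D^2$ inside $\Wo$, and I claim this neighborhood is diffeomorphic to $X(-K)$. Indeed, $\nu(\Delta)$ is $D^2 \times D^2$ with boundary decomposing as $(\Delta \times \del D^2) \cup (\del \Delta \times D^2)$; reading this as a handle attachment shows the framing is determined by the (nullhomologous) normal data, giving precisely the $0$-framed $2$-handle attached along $-K$, i.e. $X(-K)$. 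I would then verify that this embedding induces the zero map on $H_2$, which follows exactly from the hypothesis $[\Delta]=0 \in H_2(\Wo, \del\Wo)$: the generator of $H_2(X(-K))$ is represented by the core disk capped with a Seifert surface pushed in, and its image in $H_2(W)$ is the class $[\Delta]$ (capped appropriately), which vanishes.

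For the converse, suppose $X(-K) \hookrightarrow W$ is a smooth embedding inducing the zero map on $H_2$. The core of the $2$-handle of $X(-K)$ is a disk $D$ bounded by $-K$ sitting in a copy of $S^3$; more precisely $X(-K)$ is built from $B^4$ with a $0$-framed handle on $-K$, and reversing orientation I would extract from the embedded $X(-K)$ a properly embedded disk for $K$ in the complement $W \setminus \interior(X(-K))$ together with the boundary $S^3$ on which $K$ lives. The disk is the union of the cocore-type disk coming from the handle structure and a collar; I would then check that this disk is nullhomologous, which is where the "$0$-map on $H_2$" hypothesis is used: the class of the disk in $H_2(\Wo, \del \Wo)$ is Poincar\'e--Lefschetz dual to the intersection pairing against classes coming from $H_2(X(-K))$, and the zero-map condition forces it to vanish.

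The main obstacle, and the step deserving the most care, is making the identification of $\nu(\Delta)$ with $X(-K)$ and the framing/orientation bookkeeping fully rigorous --- in particular tracking the orientation reversal (why $-K$ rather than $K$) and confirming that the $0$-framing is forced. The natural framing on $\nu(\Delta)$ is the one induced by the trivialization of the normal bundle of the nullhomologous disk, and one must check this agrees with the $0$-framing on the knot; this is a standard but delicate point, and the nullhomology hypothesis is exactly what guarantees the normal bundle is trivial so that such a framing exists and is canonical. Once this diffeomorphism $\nu(\Delta) \cong X(-K)$ is established, both directions of the equivalence follow by cutting and regluing along $S^3_0(K) = \del X(-K)$, and the homological conditions match up on the nose. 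I would lean on the cited references \cite{FoxMilnor, HP19} for the precise framing computation rather than reproducing it in full.
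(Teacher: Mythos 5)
The paper does not actually prove this lemma---it is quoted from the references---so there is no in-house argument to compare against; your overall architecture (trade the nullhomologous disk for the trace by a handle decomposition of a neighborhood, and reverse the process for the converse) is the standard one. However, as written your proof contains a genuine error at its central step. The claim $\nu(\Delta)\cong X(-K)$ is false: a tubular neighborhood of a properly embedded disk is a $D^2$-bundle over $D^2$, hence a $4$-ball with boundary $S^3$, whereas $\partial X(-K)=S^3_0(-K)$. What is true is that $X(-K)\cong B^4\cup_{\nu(K)}\nu(\Delta)$, where $B^4$ is the ball deleted from $W$ to form $\Wo$: you must glue the removed $0$-handle back in and attach $\nu(\Delta)$ to it as a $2$-handle along $\nu(K)\subset S^3=\partial B^4$. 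This is not optional bookkeeping---it is exactly where the orientation reversal you worry about occurs ($S^3=\partial B^4=-\partial \Wo$, which is what turns $K$ into $-K$), and without the $0$-handle the final "cutting and regluing along $S^3_0(K)=\partial X(-K)$" makes no sense, since $\partial\nu(\Delta)\cong S^3$. The same slip recurs in the converse: the slice disk is the core of the $2$-handle, which lies \emph{inside} the embedded $X(-K)$, not in $W\setminus \interior (X(-K))$; the correct move is to delete the interior of the $0$-handle of the embedded trace, identify the complement with $\Wo$, and observe that the core becomes a properly embedded disk in $\Wo$ with boundary $K\subset\partial \Wo$.

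Two smaller points. The normal bundle of a disk is always trivial (a disk is contractible), so nullhomology is not what guarantees triviality; it is what forces the induced framing to be the $0$-framing, via $f=[\Delta\cup\hat{F}]^2=0$ where $\hat{F}$ is a pushed-in Seifert surface. And in the converse, deducing vanishing from Poincar\'e--Lefschetz duality and the intersection pairing only shows the class is torsion; the clean argument is that under $H_2(\Wo,\partial \Wo)\cong H_2(W,B^4)\cong H_2(W)$ the class of the core corresponds to the image of the generator of $H_2(X(-K))$, which is zero by the hypothesis on the induced map. With the $0$-handle restored and these homological identifications made directly, your outline becomes a correct proof.
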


Hence, in this paper we are particularly interested in knots with homeomorphic zero-surgeries which do not have diffeomorphic traces. In full generality, it is a subtle problem to demonstrate that a pair of knot traces with homomorphic boundaries are not diffeomorphic, see \cite{Yasui, HMP19}. In fact, even the easier problem of determining whether given zero surgery homeomorphism can be extended to a trace diffeomorphism is open in general. There is some luck though: many knots have that the mapping class group of $S^3_0(K)$ is just a single element, hence if there was a trace diffeomorphism it would restrict to any given boundary homeomorphism. That $MCG(S^3_0(K))=1$ for a particular knot $K$ can be verified in {\em SnapPy} and {\em Sage} \cite{SnapPy, Sage}. \setlength{\footnotemargin}{20pt} \footnote{{\em SnapPy} computes the symmetry group of a hyperbolic manifold by finding a canonical cellulation. This is done using numerical  methods. If one wants a mathematical proof, then the {\em SnapPy} answer needs to be certified rigorously, e.g. using interval arithmetic as in \cite{DHL}.\label{snap}}

Therefore, we will be especially interested in  zero-surgery homeomorphisms which do not extend to trace diffeomorphisms. First, we remind the reader that the zero-surgery homeomorphisms which do not extend to trace \emph{homeomorphisms} are well understood:

\begin{definition} A 0-surgery homeomorphism $\phi:S^3_0(K)\to S^3_0(K')$ is \emph{even} if the 4-manifold $Z:=X(K')\cup_\phi -X(K)$ has even intersection form, and is \emph{odd} otherwise. An RBG link is \emph{even} (resp. \emph{odd}) if the associated 0-surgery homeomorphism is even (resp odd). 
\end{definition}
\begin{theorem}[\cite{Boyer} Theorem 0.7 and Proposition 0.8]\label{thm:Boyer}
A 0-surgery homeomorphism $\phi:S^3_0(K)\to S^3_0(K')$ extends to a trace homeomorphism $\Phi:X(K)\to X(K')$ if and only if $\phi$ is even. 
\end{theorem}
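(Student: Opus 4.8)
The plan is to study the closed, oriented four-manifold $Z=X(K')\cup_\phi -X(K)$ and to reduce the extension question to Boyer's classification of simply connected four-manifolds with a fixed boundary. First I would record the homology. A Mayer--Vietoris computation across $M:=S^3_0(K)\cong_\phi S^3_0(K')$, together with van Kampen (each trace is simply connected, with fundamental group a quotient of $\pi_1(M)$, so $\pi_1(Z)=1$), shows that $Z$ is closed and simply connected with $H_2(Z;\Z)\cong\Z^2$. Since $\sigma(X(K))=\sigma(X(K'))=0$, Novikov additivity gives $\sigma(Z)=0$, so the intersection form of $Z$ is a rank-two indefinite unimodular form of signature $0$: the even hyperbolic form $H=\left(\begin{smallmatrix}0&1\\1&0\end{smallmatrix}\right)$ when $\phi$ is even, and $\langle 1\rangle\oplus\langle -1\rangle$ when $\phi$ is odd. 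Equivalently, $\phi$ is even exactly when $Z$ is spin.

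The forward implication is elementary. If $\phi$ extends to a homeomorphism $\Phi\colon X(K)\to X(K')$, then re-gluing $Z$ through $\Phi$ turns the boundary identification into the identity, so $Z$ is homeomorphic to the boundary double $D(X(K))=X(K)\cup_{\mathrm{id}}-X(K)$; the two handle cores give $H_2$-generators of self-intersection $0$ pairing to $1$, so $D(X(K))$ carries the even form $H$ and $\phi$ is even. For the converse I would invoke Boyer's realization theorem for simply connected topological four-manifolds rel boundary. Both $X(K)$ and $X(K')$ are simply connected, share the boundary $M$ via $\phi$, and carry the zero intersection form $(0)$ on $\Z$; hence $\phi$ automatically transports the boundary linking data of one trace onto that of the other and is covered by an isometry of intersection forms. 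Boyer's theorem then asserts that such a boundary-compatible isometry is realized by a homeomorphism extending $\phi$ precisely when a single $\Z/2$ Kirby--Siebenmann/Rokhlin-type obstruction vanishes, and his Proposition 0.8 computes this obstruction as the parity of the intersection form of the capped-off manifold $Z$. Since $Z$ is even, the obstruction vanishes and the extension exists.

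I expect the main obstacle to be exactly this converse. The honest content is Boyer's Freedman-type realization result---topological surgery rel boundary in the simply connected category---together with the identification of the abstract extension obstruction with the spin-ness (i.e.\ the parity) of $Z$. Unwinding the linking form of $M$ and the self-pairings in order to confirm that the algebraic compatibility hypotheses genuinely hold for the two traces is routine, but the realization step and the obstruction computation carry all the difficulty.
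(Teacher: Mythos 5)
Your proposal takes essentially the same route as the paper: the paper also proves only the ``only if'' direction, by noting that a trace homeomorphism extending $\phi$ identifies $Z=X(K')\cup_\phi -X(K)$ with the double of a trace (i.e.\ $X(K)$ with a $0$-framed $2$-handle attached along $\mu_K$ and a $4$-handle), whose intersection form is the even hyperbolic form, contradicting oddness of $\phi$; and, like you, it defers the converse entirely to Boyer's Theorem 0.7 and Proposition 0.8. The only cosmetic difference is which trace gets doubled, so your argument is correct and equivalent to the paper's.
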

\begin{proof}
For the reader's convenience, we include a proof of the easy `only if' direction.

Suppose for a contradiction that $\phi$ is not even, and that there is some homeomorphism $\Phi:X(K)\to X(K')$ extending $\phi$. Let $W$ denote the 4-manifold obtained from $X(K)$ by attaching a 0-framed 2-handle along $\mu_K$ followed by a 4-handle, and observe that $W$ has even intersection form. Define $Z:=X(K')\cup_\phi -X(K)$, which has odd intersection form by hypothesis. Observe that $\Phi$ gives a natural homeomorphism from $W$ to $Z$, a contradiction. 
\end{proof}

\begin{remark}
We emphasize that Theorem \ref{thm:Boyer} only shows that a particular boundary homeomorphism does not extend, \emph{not} that the traces are not homeomorphic. For example, there exist  boundary homeomorphisms $X(U)\to X(U)$ that don't extend to trace homeomorphisms \cite{gluck}.
\end{remark}

We observe that the knots whose 0-surgery admits an odd homeomorphism are somewhat restricted.

\begin{lemma}\label{lem:oddArf}
If a homeomorphism $\phi:S^3_0(K)\to S^3_0(K')$ is odd then $\Arf(K)=\Arf(K')=0$.
\end{lemma}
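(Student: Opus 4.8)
The plan is to reinterpret the even/odd dichotomy in terms of spin structures, and then to play the Arf invariant off against the Rokhlin invariant of the two spin structures on the $0$-surgery. First I would record that each trace $X(K)$ is simply connected with intersection form $[0]$, hence even, so $X(K)$ is spin and, being simply connected, carries a unique spin structure; let $\mathfrak{s}_K$ denote the spin structure it induces on $\partial X(K)=S^3_0(K)$, and similarly $\mathfrak{s}_{K'}$ on $S^3_0(K')$. Recall that for a closed oriented $4$-manifold ``even intersection form'' is equivalent to ``spin.'' Applying this to $Z=X(K')\cup_\phi -X(K)$, and noting that the spin structures coming from $X(K')$ and $-X(K)$ glue to a spin structure on $Z$ exactly when $\phi$ matches their boundary restrictions, I obtain the translation: $\phi$ is even iff $\phi_*(\mathfrak{s}_K)=\mathfrak{s}_{K'}$, and $\phi$ is odd iff $\phi_*(\mathfrak{s}_K)$ is the \emph{other} spin structure on $S^3_0(K')$ (there are exactly two, since $H^1(S^3_0(K);\mathbb{Z}/2)=\mathbb{Z}/2$).

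The second ingredient is the Rokhlin invariant $\mu(\cdot)\in\mathbb{Z}/16$ of a spin $3$-manifold, defined as $\sigma(W)\bmod 16$ for any smooth spin filling $W$. This is well defined even though $b_1(S^3_0(K))=1$, because any two fillings glue along the matching boundary spin structure to a closed spin $4$-manifold, whose signature is $\equiv 0 \bmod 16$ by Rokhlin's theorem, so Novikov additivity forces the two signatures to agree mod $16$. Since $X(K)$ is a spin filling of $(S^3_0(K),\mathfrak{s}_K)$ with $\sigma(X(K))=0$, I get $\mu(S^3_0(K),\mathfrak{s}_K)=0$. The crucial input, which I would invoke as a classical fact, is that the two spin structures on $S^3_0(K)$ have Rokhlin invariants differing by $8\operatorname{Arf}(K)$; hence the non-trace spin structure has Rokhlin invariant $8\operatorname{Arf}(K)\bmod 16$, and likewise for $K'$.

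Finally I would combine these. Because $\phi$ is a homeomorphism of (uniquely smoothable) $3$-manifolds, it carries each spin structure to one with the same Rokhlin invariant. In the odd case, $\phi_*$ sends the trace spin structure $\mathfrak{s}_K$ (Rokhlin invariant $0$) to the non-trace spin structure of $S^3_0(K')$ (Rokhlin invariant $8\operatorname{Arf}(K')$), forcing $8\operatorname{Arf}(K')\equiv 0\bmod 16$, i.e. $\operatorname{Arf}(K')=0$; symmetrically $\phi_*$ sends the non-trace spin structure of $S^3_0(K)$ (Rokhlin invariant $8\operatorname{Arf}(K)$) to $\mathfrak{s}_{K'}$ (Rokhlin invariant $0$), forcing $\operatorname{Arf}(K)=0$. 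This yields $\operatorname{Arf}(K)=\operatorname{Arf}(K')=0$, as desired.

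The main obstacle is the second ingredient: pinning down that the Rokhlin invariants of the two spin structures on $S^3_0(K)$ differ by exactly $8\operatorname{Arf}(K)$. The clean absolute value $\mu(S^3_0(K),\mathfrak{s}_K)=0$ for the trace spin structure is immediate from $\sigma(X(K))=0$, but identifying the change under the other spin structure with the Arf invariant is where the real content lies; I would either cite the standard Arf--Rokhlin correspondence or supply the short argument that builds a spin filling for the second spin structure from a pushed-in Seifert surface of $K$ and computes its signature mod $16$.
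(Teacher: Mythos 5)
Your argument is correct, and it takes a genuinely different route from the paper's. The paper applies Robertello's $4$-dimensional characterization of the Arf invariant directly: it forms $Z=X(K')\cup_\phi -X(K)$, deletes a $0$-handle to get a filling $X$ of $S^3$ in which the core disk $D$ of the $2$-handle represents a class $\alpha$ with $\alpha\cdot\alpha=0$ in the basis where $Q_X=\left(\begin{smallmatrix}0&1\\1&n\end{smallmatrix}\right)$, observes that oddness of $\phi$ makes $n$ odd and hence $\alpha$ characteristic, and reads off $\Arf(K')=\tfrac{0-\sigma(X)}{2}=0$ from Robertello's congruence. You instead translate the parity of $\phi$ into whether it matches the (unique) trace spin structures, and then compare Rokhlin invariants; your bookkeeping is sound, including the well-definedness of $\mu$ for $b_1>0$ via Rokhlin's theorem and Novikov additivity, the computation $\mu(S^3_0(K),\mathfrak{s}_K)=\sigma(X(K))=0$, and the observation that an odd $\phi$ must swap trace and non-trace spin structures on both sides. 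What your approach buys is a clean conceptual statement (odd $=$ spin structures mismatched) that makes the symmetry between $K$ and $K'$ transparent without "turning $Z$ upside down." What it costs is that your crucial input --- that the non-trace spin structure on $S^3_0(K)$ has Rokhlin invariant $8\Arf(K)\bmod 16$ --- is essentially Robertello's theorem in disguise (the standard proof produces a spin filling from a pushed-in Seifert surface and runs the same characteristic-surface congruence), so you have not avoided the external classical input, only repackaged it; you should make sure to either cite it precisely or include the short Seifert-surface argument you sketch, since that is where all the content of the lemma actually lives.
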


\begin{proof}
Robertello \cite{robertello} showed that if $X$ is a simply connected smooth 4-manifold with $S^3$ boundary and $K\subset S^3$ bounds a smooth disk $D$ in $X$ such that $[D]\in H_2(X,\partial X)\cong H_2(X)$ is characteristic then $$\Arf(K) = \frac{[D]\cdot[D]-\sigma(X)}{2} \ \text{mod 2.}$$ 
Consider the 4-manifold $Z$ obtained by gluing $X(K)$ (upside down) to $X(K')$ (right side up) along $S^3_0(K')$ via $\phi$. Remove the 0-handle of $X(K')$ to get $X$ with $S^3$ boundary, and observe that $K'\subset S^3$ bounds a 0-framed disk $D$ in $X$ (the core of the 2-handle). Further,  this handle decomposition of $X$ gives a natural presentation of $H_2$ with basis $\{\alpha,\beta\}$ and intersection form
$$Q_X=\begin{pmatrix}
0 & 1\\
1 & n
\end{pmatrix}.$$ Further, we see that $[D]=\alpha$ and, since $n$ is odd, $\alpha$ is characteristic. Then Robertello's result applies and we get $\Arf(K')=0$. We can obtain the same conclusion about $K$ by turning $Z$ upside down. 
\end{proof}

\begin{remark}
The converse is false, as can be seen by considering the identity homeomorphism on $S^3_0(K)$ for any knot with $\Arf(K)=0$. 
\end{remark}

We remark that Lemma \ref{lem:oddArf} pairs with Theorem \ref{thm:Boyer} to demonstrate that 
\begin{corollary}
If $\Arf(K)=1$ then every 0-surgery homeomorphism $\phi:S^3_0(K)\to S^3_0(K')$ extends to a trace homeomorphism. 
\end{corollary}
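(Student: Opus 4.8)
The plan is to deduce this immediately from the two preceding results by contraposition, since the phrasing ``Lemma~\ref{lem:oddArf} pairs with Theorem~\ref{thm:Boyer}'' already signals that no new geometric input is needed. By Theorem~\ref{thm:Boyer}, a $0$-surgery homeomorphism $\phi:S^3_0(K)\to S^3_0(K')$ extends to a trace homeomorphism precisely when $\phi$ is even. Thus it suffices to prove that, under the hypothesis $\Arf(K)=1$, every such $\phi$ is forced to be even.

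First I would fix an arbitrary $0$-surgery homeomorphism $\phi:S^3_0(K)\to S^3_0(K')$ and argue by contradiction: suppose $\phi$ were odd. Then Lemma~\ref{lem:oddArf} applies and yields $\Arf(K)=\Arf(K')=0$. In particular $\Arf(K)=0$, which contradicts the standing assumption $\Arf(K)=1$. Hence no odd $\phi$ can exist, so every $0$-surgery homeomorphism out of $S^3_0(K)$ is even, and Theorem~\ref{thm:Boyer} then provides the desired extension $\Phi:X(K)\to X(K')$.

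There is essentially no obstacle here beyond keeping the logical direction straight: the content is entirely carried by Lemma~\ref{lem:oddArf} and Theorem~\ref{thm:Boyer}, and the corollary is just their contrapositive combination, valid uniformly over all partner knots $K'$ because the hypothesis constrains only the invariant $\Arf(K)$ of the fixed knot $K$. The one point worth stating explicitly is that the argument is independent of the choice of $K'$ and of $\phi$, since the parity of $\phi$ is the only feature entering, and its oddness is obstructed solely by $\Arf(K)\neq 0$.
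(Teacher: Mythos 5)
Your argument is correct and is precisely the one the paper intends: the corollary is stated as an immediate consequence of pairing Lemma~\ref{lem:oddArf} with Theorem~\ref{thm:Boyer}, and your contrapositive combination (odd would force $\Arf(K)=0$, so $\phi$ must be even, so it extends) is exactly that. Nothing is missing.
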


\subsection{Trace diffeomorphisms}
It remains well out of reach to classify when 0-surgery homeomorphisms extend to trace diffeomorphisms.  We give a sufficient condition for a homeomorphism to extend. 

\begin{definition} Let $\phi: S^3_0(K) \to S^3_0(K')$ be a $0$-surgery homeomorphism. Let $\gamma\subset S^3_0(K')$ be the framed knot given by the image of the 0-framed meridian of $K$ under $\phi$. We say that $\phi$ has \emph{property $U$} if there is some diagrammatic choice of $\gamma$ in the standard diagram of $S^3_0(K')$ where  $\gamma$ is $0$ framed and appears unknotted in the diagram. 
\end{definition}

\begin{theorem}\label{thm:propU}
If $\phi$ has property $U$ then there exists a diffeomorphism $\Phi:X(K)\to X(K')$ with $\Phi|_\partial=\phi$. 
\end{theorem}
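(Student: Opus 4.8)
The plan is to turn the trace $X(K)$ upside down and transport it across $\phi$. Recall that $X(K)$, read from its boundary, carries the handle decomposition
\[
X(K) \cong \bigl(S^3_0(K)\times I\bigr)\cup_{\mu_K} h^2 \cup h^4,
\]
where the $2$-handle is attached along the $0$-framed meridian $\mu_K$ (the belt circle of the original $2$-handle) and $h^4$ caps the $S^3$ produced at the inner boundary. Using the product structure, $\phi$ extends to a diffeomorphism
\[
X(K)\ \cong\ \hat Y := \bigl(S^3_0(K')\times I\bigr)\cup_{\gamma} h^2 \cup h^4
\]
that restricts to $\phi$ on $S^3_0(K')\times\{0\}$, since $\gamma=\phi(\mu_K)$ and $\phi$ preserves the $0$-framing. (Here I use that surgery on $\gamma$ in $S^3_0(K')$ yields $S^3$, which holds because $\phi$ carries the $\mu_K$-surgery, giving $S^3$, to the $\gamma$-surgery, so the inner boundary of $\hat Y$ really is an $S^3$ to be capped.) Thus it suffices to produce a diffeomorphism $\hat Y \cong X(K')$ that is the identity on $S^3_0(K')$; composing gives the desired $\Phi$, and the final $h^4$ is attached uniquely up to isotopy by Cerf's theorem.

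Next I would set up the Kirby picture supplied by property $U$. By hypothesis the standard diagram of $S^3_0(K')$ is $K'$ with framing $0$, and $\gamma$ may be drawn in it as a $0$-framed unknot; so in $S^3$ we have a two-component framed link $K'(0)\cup\gamma(0)$ with $\gamma$ unknotted. Because the inner boundary of $\hat Y$ is $S^3 = S^3_{0,0}(K'\cup\gamma)$, the linking matrix $\left(\begin{smallmatrix}0 & \ell\\ \ell & 0\end{smallmatrix}\right)$, with $\ell = \operatorname{lk}(K',\gamma)$, must be unimodular; hence $\ell = \pm 1$. In particular $\hat Y = B^4\cup h^2 = X(\tilde K)$, where $\tilde K$ is the dual knot of $\gamma$ (its belt circle $\mu_\gamma$) viewed in the top $S^3$, with framing $0$ (forced because $H_2(\hat Y)=\mathbb{Z}$ carries trivial self-pairing). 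Everything now reduces to identifying $\tilde K$.

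The crux is to show $\tilde K$ is isotopic to $K'$; for this it suffices to show that the $0$-framed unknot $\gamma$ is framed-isotopic in $S^3_0(K')$ to the meridian $\mu_{K'}$. Granting this, the dual of $\mu_{K'}$ is $K'$ itself (undoing the surgery returns $(S^3,K')$, in which $\mu_\gamma=\mu_{\mu_{K'}}$ is isotopic to $K'$), so $\hat Y\cong X(K')$ rel boundary and we are done. To prove the isotopy I would start from a spanning disk $D\subset S^3$ of the unknot $\gamma$; it meets $K'$ algebraically $\ell=\pm 1$ times but possibly more geometrically. Since we have passed to $S^3_0(K')$, the $0$-framed longitude of $K'$ bounds the meridian disk of the surgery solid torus, and this disk can be used, via the Norman trick, to tube away the excess $\pm$ intersection pairs of $D$ with $K'$ while keeping $\partial D=\gamma$ fixed and $D$ embedded. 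After reducing the geometric intersection number to one, $D$ exhibits $\gamma$ as a meridian of $K'$, i.e.\ $\gamma\simeq\mu_{K'}$; a framing check (both $0$) finishes the argument.

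The main obstacle is exactly this last isotopy: an algebraically-once-linking unknot need not be geometrically a meridian in $S^3$, and the reduction genuinely takes place in $S^3_0(K')$ rather than in $S^3$ (it is the surgery disk that makes the tubings available). Ensuring that the Norman-trick tubings keep $D$ embedded and leave the framing equal to $0$ is the delicate part, and it is precisely where unknottedness of $\gamma$ is used: a knotted $\gamma$ would produce a knotted or satellite dual $\tilde K\neq K'$, so property $U$ is what guarantees the dual knot is $K'$ on the nose.
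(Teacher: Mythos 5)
The reduction to producing a diffeomorphism $\hat Y\cong X(K')$ that restricts to the identity on $S^3_0(K')$ is fine, but the step you call the crux is false, and it cannot be repaired along these lines. If $\gamma=\phi(\mu_K)$ were framed-isotopic to $\mu_{K'}$ in $S^3_0(K')$, then composing $\phi$ with that isotopy and deleting tubular neighborhoods would give a homeomorphism of knot exteriors $S^3\setminus\nu(K)\to S^3\setminus\nu(K')$ (the exterior of $\mu_K$ in $S^3_0(K)$ is the exterior of $K$ in $S^3$, since $\mu_K$ is isotopic to the core of the surgery solid torus), and Gordon--Luecke would force $K=K'$. But property $U$ holds for many $0$-surgery homeomorphisms between \emph{distinct} knots --- for instance all dualizable-pattern homeomorphisms, as noted at the end of Section 3 --- so $\gamma$ is in general not isotopic to $\mu_{K'}$, even though the theorem's conclusion still holds. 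Relatedly, the dual knot $\tilde K$ of $\gamma$ is (the image of) $K$, not $K'$: that is exactly the statement $\hat Y\cong X(K)$, which you already know. So ``show $\tilde K\cong K'$'' is a restatement of the theorem rather than a reduction, and no $3$-dimensional isotopy of $\gamma$ inside $S^3_0(K')$ can deliver it. (The Norman-trick step is also not available in dimension $3$: tubing together two excess intersection points of the spanning disk $D$ along an arc of $K'$ drops the Euler characteristic by two, so you leave the class of disks; and the meridian disk of the surgery solid torus is bounded by the longitude $\lambda_{K'}$, which is not something you can tube into while fixing $\partial D=\gamma$.)

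The actual proof must use the unknottedness of $\gamma$ four-dimensionally rather than through an ambient isotopy. The paper's argument carves out of $X(K')$ a tubular neighborhood of the slice disk that the unknot $\gamma$ bounds in the $0$-handle, and shows the complement $Y_1$ is diffeomorphic to $B^4$; this is where the real content lies, and it is proved by observing that $\partial Y_1=S^3_{0,0}(K'\cup\gamma)$ with $\gamma$ unknotted, performing the surgery on $\gamma$ first to view $\partial Y_1\cong S^3$ as surgery on a knot in $S^1\times S^2$, and invoking Gabai's property $R$ to cancel the resulting $1$--$2$ handle pair. The corresponding complement in $X(K)$ (of a neighborhood of the cocore of the $2$-handle) is obviously $B^4$, the two $D^2\times D^2$ pieces are matched by a bundle map compatible with $\phi$ because the framings agree, and Cerf's theorem extends the resulting boundary homeomorphism $S^3\to S^3$ over the two copies of $B^4$. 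Note that this route never identifies the dual knot with $K'$ and never produces an isotopy from $\gamma$ to $\mu_{K'}$; it only needs both complements to be standard balls.
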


\begin{proof}
Let $X\subset X(K)$ be a (closed) tubular neighborhood of the cocore disk of the 2-handle, which is naturally identified with $D^2\times D^2$, and let $X'\subset X$ be the open neighborhood of the cocore disk, which is a $D^2\times \intD$. Since the image $\phi(\mu_K)$ appears unknotted in the standard handle diagram of $X(K')$, we can identify the standard slice disk $\Delta$ for $\phi(\mu_K)$ in the $0$-handle $B^4$. Let $Y\cong D^2\times D^2$ be a closed tubular neighborhood of $\Delta$ and let $Y'\cong D^2\times \intD \subset Y$ an open neighborhood. Since $\phi$ preserves the 0-framing on $\mu_K$, the natural bundle diffeomorphism $F':X\to Y$ has that $F'|_{\partial(X(K))}$ agrees with $\phi|_{\nu(\mu_K)}$. 

Let $X_1$ denote $X(K)\setminus X'$ and $Y_1$ denote $X(K')\setminus Y'$. It is evident that $X_1$ is diffeomorphic to $B^4$. We will argue momentarily that $Y_1$ is also diffeomorphic to $B^4$. Assuming this for now, we will finish the argument. Observe that $\phi|_{\partial X(K)\setminus \nu(\mu_K)}$ and $F'|_{D^2\times\partial(D^2)}$ give a piecewise homeomorphism $f$ from $\partial X_1\cong S^3$ to $\partial Y_1 \cong S^3$. Since there is only one homeomorphism of $S^3$ up to isotopy \cite{Cerf}, and it extends smoothly over $B^4$, we can extend $f$ to a diffeomorphism $F:X_1\to Y_1$. By construction $F$ and $F'$ together produce a  diffeomorphism from $X(K)$ to $X(K')$ extending $\phi$. 

Now we argue that $Y_1$ is diffeomorphic to $B^4$. Observe that $Y_1$ has $S^3$ boundary and a handle decomposition given by dotting $\phi(\mu_K)$ in the standard handle decomposition of $X(K')$. So $\partial Y_1$ is naturally described as $(0,0)$ surgery on the 2-component link $K' \cup \phi(\mu_K) \subset S^3$, and one link component is an unknot. Then, by performing the 0-surgery on $\phi(\mu_K)$ first, we can think of $\partial Y_1$ as $S^3$ obtained by surgery on some knot $\ell$ in $S^1\times S^2$. By Gabai's proof of property $R$ \cite{GabaiR}, $\ell$ is isotopic to $S^1\times \{pt\}$. Performing this isotopy on the attaching sphere of our 2-handle (sliding over the 1-handle as needed) yields a handle decomposition of $Y_1$ which is just a canceling 1-2 pair, hence $Y_1$ is diffeomorphic to $B^4$. 
\end{proof}
\begin{question}\label{ques:propU}
Is the converse of Theorem \ref{thm:propU} true?
\end{question}

We comment now about these properties (parity and property $U$) for some constructions of 0-surgery homeomorphisms from the literature. (These constructions are discussed further in Section~\ref{sec:otherconstr}.) Dualizable pattern homeomorphisms \cite{Lickorish1, Brakes, GM} are all even and have property $U$. Annulus twisting  \cite{Osoinach} can be odd, and sometimes has property $U$; see Figure \ref{fig:Jmktw} and Remark \ref{rem:annulusparity}. Yasui's homeomorphisms \cite{Yasui} are constructed such that the boundary diffeomorphism extends to a homeomorphism, hence are all even. Yasui's homeomorphism sometimes does not have property $U$, in fact it may never have property $U$; see Section \ref{sec:Yasui}.

\section{Special and small RBG links}\label{sec:smallspecial}
We are interested in constructing many explicit pairs $K$ and $K'$ with homeomorphic 0-surgeries, such that $K$ and $K'$ are both simple enough that we have some hope of computing their concordance invariants or constructing slice disks for them in practice. Towards these ends, in this section we collect several user-friendly results about certain subclasses of RBG links.

\subsection{Special RBG links}
\begin{definition}
Let $\mu_R$ denote a meridian of $R$. An RBG link $L$ is \emph{special} if $b=g=0$, $r \in \Z$, and there exist link isotopies
$$R \cup B \cong R \cup \mu_R \cong R \cup G.$$ 
\end{definition} 

Notice that for a special RBG link, $H_1(S^3_{r, b, g}(L); \Z) \cong \Z$ if and only if the determinant of the framing matrix is zero. Let $l$ denote the linking number of $B$ with $G$. Then we have
\[
\begin{vmatrix}
r & 1 & 1 \\
1 & 0 & l \\
1 & l & 0
\end{vmatrix} = 2l - rl^2 = 0.
\]
Therefore, special RBG links have either $l=0$ or $rl = 2$.
\begin{figure}
{
   \fontsize{9pt}{11pt}\selectfont
   \def\svgwidth{3.8in}
   \input{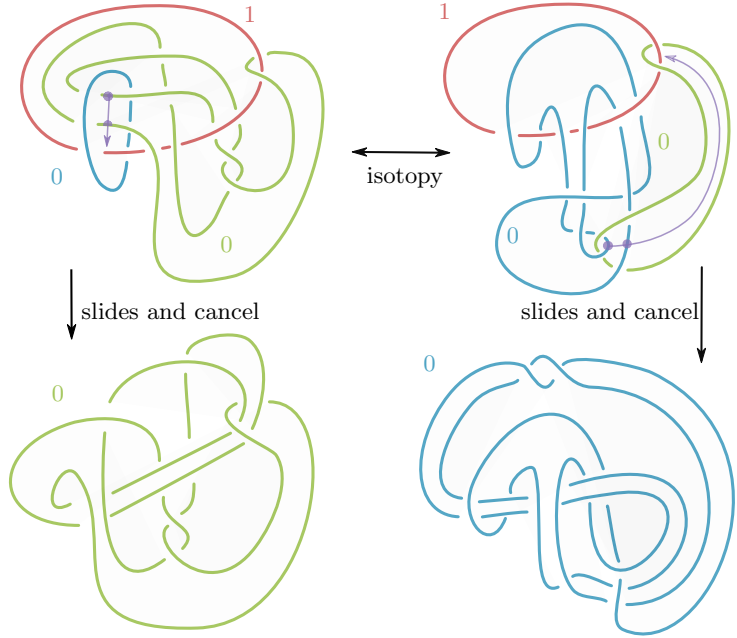}
}
\caption{A small RBG link and the associated knots $K_G$ and $K_B$.}
\label{fig:smallRBG}
\end{figure}

For a special RBG link, the homeomorphism $\psi_G:S^3_{r,0}(R,B)\to S^3$ is given by the slam-dunk homeomorphism of $B$ over $R$. 
Therefore, to exhibit the knot $K_G$  one should slide $G$ over $R$ until $G$ no longer intersects the disk $\Delta_B$ bounded by $B$. The knot $K_B$ can be exhibited in the same manner, everywhere reversing the roles of $B$ and $G$. See Figure \ref{fig:smallRBG} for an example where these slides are marked and performed.

It is easy to recognize the parity of a special RBG link:

\begin{lemma}
\label{lem:specialeven}
A special RBG link $L$ is even if and only if $r$ is. 
\end{lemma}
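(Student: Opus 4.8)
The plan is to compute the intersection form of the closed $4$-manifold $Z := X(K_G)\cup_{\phi_L}(-X(K_B))$ directly from the special RBG link and read off its parity. Turning $-X(K_B)$ upside down, $Z$ has a handle decomposition with one $0$-handle, the $0$-framed $2$-handle of $X(K_G)$ attached along $K_G$, a dual $2$-handle attached along $\gamma := \phi_L(\mu_{K_B})$ (the image of the $0$-framed meridian of $K_B$), exactly as in the proof of Lemma~\ref{lem:slicehomeo}, and one $4$-handle. Since there are no $1$- or $3$-handles, $Z$ is simply connected and $H_2(Z)=\Z^2$ with basis $\{\alpha,\beta\}$ given by the two $2$-handle classes. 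Writing $Q_Z=\begin{pmatrix}\alpha^2 & \alpha\cdot\beta\\ \alpha\cdot\beta & \beta^2\end{pmatrix}$, the off-diagonal contributes $2(\alpha\cdot\beta)$ to every value $Q_Z(x,x)$, so $Q_Z$ is even if and only if $\alpha^2$ and $\beta^2$ are both even. As $\alpha$ is the class of a $0$-framed handle, $\alpha^2=0$, so everything reduces to the parity of $\beta^2$.

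Next I would realize $\{\alpha,\beta\}$ as an honest framed link in $S^3$. By the construction of $K_G$ and $K_B$ in the proof of Theorem~\ref{thm:RBG}, the homeomorphism $\psi_G\colon S^3_{r,0}(R\cup B)\to S^3$ carries $G$ to $K_G$ and carries the $0$-framed meridian $\mu_B$ of $B$ to $\gamma$. Thus $Q_Z$ is precisely the linking matrix of $\{K_G,\gamma\}$ in $S^3$, where these are the images under $\psi_G$ of the $0$-framed curves $G$ and $\mu_B$. The relevant linking data with the surgered sublink $R\cup B$ is $\vec v_G=(\operatorname{lk}(G,R),\operatorname{lk}(G,B))=(1,l)$ and $\vec v_{\mu_B}=(\operatorname{lk}(\mu_B,R),\operatorname{lk}(\mu_B,B))=(0,1)$, each with self-framing $0$, together with $\operatorname{lk}(G,\mu_B)=0$.

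The computational heart is the standard formula for how framings and linking numbers transform when one surgers a sublink with unimodular linking matrix. Here $R\cup B$ has linking matrix $\Lambda=\begin{pmatrix} r & 1\\ 1 & 0\end{pmatrix}$ with $\det\Lambda=-1$ (so indeed $S^3_{r,0}(R\cup B)\cong S^3$), and the new pairing on the remaining curves is the Schur complement $-\,V^T\Lambda^{-1}V$, where $V$ has columns $\vec v_G$ and $\vec v_{\mu_B}$. Since $\Lambda^{-1}=\begin{pmatrix} 0 & 1\\ 1 & -r\end{pmatrix}$, this yields $\beta^2=-\vec v_{\mu_B}^{\,T}\Lambda^{-1}\vec v_{\mu_B}=r$. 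As consistency checks one finds $\alpha^2=-\vec v_G^{\,T}\Lambda^{-1}\vec v_G=-(2l-rl^2)=0$, using the determinant condition $2l-rl^2=0$ for special links, and $\alpha\cdot\beta=rl-1=\pm1$, confirming $Q_Z=\begin{pmatrix}0 & \pm 1\\ \pm 1 & r\end{pmatrix}$ is unimodular. The off-diagonal sign is irrelevant, so $Q_Z$ is even precisely when $r$ is even, which is the claim.

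The main obstacle I anticipate is bookkeeping rather than conceptual: pinning down the identification $\gamma=\psi_G(\mu_B)$ and the framing and orientation conventions so that all signs come out right. The cleanest safeguard is the built-in check $\alpha^2=0$: the same transformation formula must reproduce the known $0$-framing on $K_G$, and it does so exactly when the special-link relation $2l-rl^2=0$ holds, which simultaneously validates the conventions and pinpoints where that hypothesis is used. I would also confirm that $\{\alpha,\beta\}$ is an \emph{integral} basis of $H_2(Z)$ (immediate from the absence of $1$-handles), and, if a self-contained justification of the Schur-complement formula is wanted, derive it by capping Seifert surfaces of $G$ and $\mu_B$ through the surgery solid tori, the correction terms being exactly the core intersections recorded by $\Lambda^{-1}$.
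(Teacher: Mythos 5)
Your argument is correct, and its skeleton is the same as the paper's: both pass to the closed manifold $Z$ built from the two traces, use the handle decomposition with no $1$- or $3$-handles to identify $Q_Z$ with the linking matrix of the two-component framed link consisting of the $0$-framed knot and the dual curve $\gamma$, and reduce the parity question to the framing of $\gamma$. The difference is purely in how that framing is computed. The paper does it diagrammatically: it traces the $0$-framed meridian $\mu_G$ through the slam-dunk homeomorphism and observes that the image is a framed ``ghost'' of $R$, hence carries framing $r$; the off-diagonal entry $\operatorname{lk}(K_B,\gamma)=1$ is obtained from homology. You instead compute the entire matrix at once via the Schur complement $Q-V^{T}\Lambda^{-1}V$ of the linking matrix $\Lambda=\begin{pmatrix} r & 1\\ 1 & 0\end{pmatrix}$ of the surgered sublink, getting $\beta^2=r$, $\alpha^2=-(2l-rl^2)=0$, and $\alpha\cdot\beta=rl-1=\pm1$. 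This is a legitimate and somewhat more systematic route: it produces the off-diagonal entry without a separate homological argument and re-derives the special-link condition $2l-rl^2=0$ as the consistency check $\alpha^2=0$, at the cost of invoking (or proving) the framing-transformation formula rather than reading the answer off a picture. The only point to make explicit is that $\operatorname{lk}(R,B)=\operatorname{lk}(R,G)$ is a priori only $\pm1$ (the definition gives $R\cup B\cong R\cup\mu_R$ without fixing orientations), but since only parities and the unimodularity of the off-diagonal block matter, this does not affect the conclusion.
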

\begin{proof}
Let $\gamma\subset S^3_0(K_B)$ be the framed knot which is the image of the 0-framed meridian of $K_G$ under $\phi_L$. Observe that $X(K_B)\cup_{\phi_L}X(-K_G)$ admits a handle decomposition given by attaching two 2-handles to $B^4$ along the framed link $K_B \cup \gamma$ followed by a single 4-handle. Since $K_B$ is 0-framed and $[\gamma]$ generates $H_1(S^3_0(K_B))$, we have $lk(K_B,\gamma)=1$. So $X(K_B)\cup_{\phi_L}X(-K_G)$ has even intersection form if and only if the framing on $\gamma$ is even. 

Now we will argue that $\gamma$ is $r$-framed. First we observe that in the $S^3_{r,b,g}(R,B,G)$ surgery diagram of $S^3_0(K_G)$, the 0-framed meridian of $K_G$ is represented by the 0-framed meridian $\mu_G$ of $G$. Now we will consider the framed image of this in the surgery diagram of $S^3_0(K_B)$. Observe that the image of $\mu_G$ under the slam dunk homeomorphism $S^3_{r,0}(R,G)\cong S^3$ is isotopic (as a framed curve) to the curve you get from sliding $\mu_G$ over $R$ to clear the (single) intersection of $\mu_G$ with the disk $G$ bounds. See Figure~\ref{fig:gamma}. As such, $\gamma\subset S^3_0(K_B)$ looks like the (framed) ghost of $R$, in particular the framing on $\gamma$ is $r$. 
\end{proof}

\begin{figure}
{
   \fontsize{9pt}{11pt}\selectfont
   \def\svgwidth{5in}
   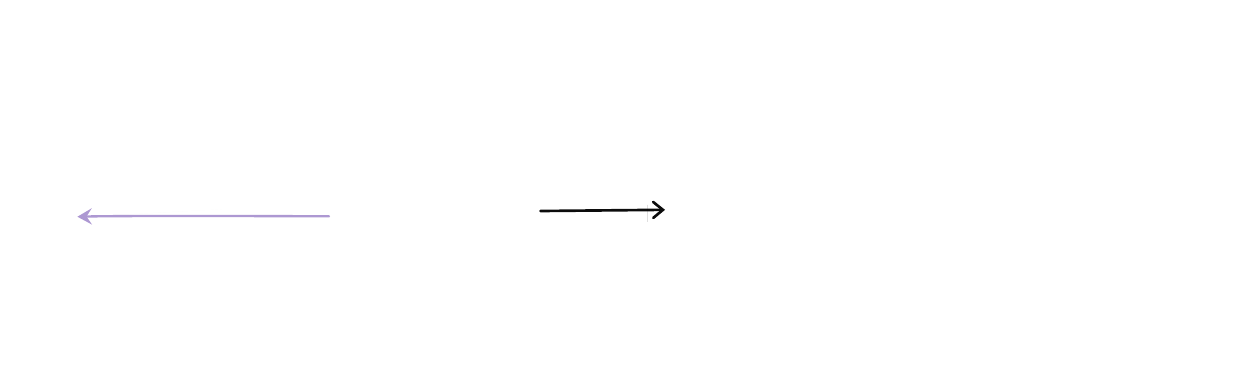
}
\caption{The curve $\gamma$ obtained from $\mu_G$ after some handle slides and a slam-dunk.}\label{fig:gamma} 
\end{figure}

\begin{lemma}\label{lem:specialpropU}
If a special RBG link has $R=U$ and $r=0$ then the associated homeomorphism has property U.
\end{lemma}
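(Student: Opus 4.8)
The plan is to deduce this almost directly from the analysis already carried out for Lemma~\ref{lem:specialeven}, where the relevant meridian image was computed explicitly; the point is that the two hypotheses $R=U$ and $r=0$ are precisely what turn that image into a $0$-framed unknot.

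First I would set up the curve that governs property $U$. Following the definition of property $U$ together with Lemma~\ref{lem:specialeven}, the object to analyze is $\gamma\subset S^3_0(K_B)$, the framed image under $\phi_L$ of the $0$-framed meridian of $K_G$, so that here $K'=K_B$. In the common surgery diagram $S^3_{r,0,0}(R\cup B\cup G)$ this meridian is represented by the $0$-framed meridian $\mu_G$ of $G$, exactly as in the earlier proof.

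Next I would transport $\mu_G$ through $\psi_B$, which is exactly the computation recorded in Figure~\ref{fig:gamma}: since $G$ is isotopic to a meridian of $R$, the homeomorphism $\psi_B\colon S^3_{r,0}(R\cup G)\to S^3$ is realized by sliding $\mu_G$ over $R$ to clear its single intersection with the disk bounded by $G$ and then performing the slam-dunk, and the outcome is a framed parallel copy of $R$ (the ``ghost of $R$'') carrying framing $r$. This step requires no new work, since it is identical to the one already performed in the proof of Lemma~\ref{lem:specialeven}; I would simply invoke it.

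Finally I would impose the hypotheses: when $R=U$ the ghost of $R$ is an unknot, and when $r=0$ its framing is $0$, so $\gamma$ appears in the standard diagram of $S^3_0(K_B)$ as a $0$-framed unknot, and hence $\phi_L$ has property $U$ by definition. The only point that needs care — and the place I would check carefully against Figure~\ref{fig:gamma} — is that the isotopy and slam-dunk producing $\gamma$ from $\mu_G$ do not disturb the rest of the standard diagram of $S^3_0(K_B)$, so that $\gamma$ genuinely appears as an unknot superimposed on that diagram. Note that $\gamma$ may still cross $K_B$, but this is harmless, since property $U$ only asks that $\gamma$ itself appear unknotted. Because Lemma~\ref{lem:specialeven} has already pinned down the framed isotopy class of $\gamma$, this verification is the entire content of the argument, and the statement is essentially a corollary of the previous lemma.
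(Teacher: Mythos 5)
Your proposal is correct and matches the paper's proof, which likewise just cites the computation from the proof of Lemma~\ref{lem:specialeven} showing that $\gamma$ is the framed ``ghost of $R$'' (so it has the knot type and framing of $R$) and then observes that $R=U$, $r=0$ makes this a $0$-framed unknot in the diagram. Your extra remarks about the slam-dunk not disturbing the rest of the diagram and about $\gamma$ possibly crossing $K_B$ are sensible but not needed beyond what the earlier lemma already establishes.
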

\begin{proof}
Let $L$ be a special RBG link and let $\gamma\subset S^3_0(K_B)$ be the framed knot given by image of the 0-framed meridian of $K_G$ under $\phi_L$. In the proof of Lemma \ref{lem:specialeven} we argued that  $\gamma\subset S^3_0(K_B)$ looks like the (framed) ghost of $R$, in particular $\gamma$ has the knot type and framing of $R$.
\end{proof}

Ideally, we would like to produce knots with homeomorphic zero surgeries such that exactly one of them is slice. Thus, one might like to start with their favorite slice knot $K$ and produce a distinct knot $K'$ with the same 0-surgery. It is not presently well-understood for which (slice) knots this is possible, but we give some lemmas that  sometimes help:

\begin{lemma}[\cite{Pic2} Proposition 3.2]
If $K$ has unknotting number one then there exists a special RBG link $L$ with $K_B$ isotopic to $K$.
\end{lemma}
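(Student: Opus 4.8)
The plan is to translate the hypothesis that $K$ has unknotting number one into surgery-theoretic data, and then to package that data as a special RBG link. First I would fix an unknotting crossing for $K$ together with a crossing circle $c$: an unknot bounding a disk $D$ that meets $K$ transversally in two points, such that $\varepsilon$-surgery on $c$ (for some $\varepsilon=\pm1$) converts $K$ into the unknot $U$. In the generic (antiparallel) situation the two points of $K\cap D$ have opposite signs, so $lk(c,K)=0$; I will treat this as the main case, and note that the parallel situation (where $lk(c,K)=\pm2$) is handled identically using the other branch $rl=2$ of the determinant condition.

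Next I would build the link $L=R\cup B\cup G$. I take $G:=c$, regarded as a $0$-framed meridian of an auxiliary unknot $R$, and I take $B$ to be a copy of the unknot $U$ arranged to be a second meridian of $R$, with $R$ positioned at the crossing so that the two strands of $B$ threading the meridian disk $\Delta_G$ are exactly the two strands of $D\cap K$. I set $r=\mp\varepsilon\in\{1,-1\}$, $b=g=0$, and $l=lk(B,G)=0$. The three conditions defining a special RBG link then have to be verified: $b=g=0$ and $r\in\Z$ hold by construction; the isotopies $R\cup B\cong R\cup\mu_R\cong R\cup G$ hold because both $B$ and $G$ are, by construction, unknotted meridians of $R$; and the homology condition $H_1(S^3_{r,b,g}(L))=\Z$ holds because $l=0$ forces the determinant $2l-rl^2$ of the framing matrix to vanish, as in the computation in Section~\ref{sec:smallspecial}.

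The heart of the argument is the identification $K_B\cong K$. Here I would use the slam-dunk description of $\psi_B\colon S^3_{r,0}(R,G)\to S^3$: since $G\cong\mu_R$ is a $0$-framed meridian, slam-dunking turns the surgery into $-1/r=\mp\varepsilon$ surgery on the unknot $G$, i.e.\ a single full twist on the two strands threading $\Delta_G$. Following the prescription of Section~\ref{sec:smallspecial} for exhibiting $K_B$, namely sliding $B$ over $R$ until it is disjoint from $\Delta_G$, band-sums those two threading strands with $r$-framed copies of $R$; with $r=\mp\varepsilon$ this inserts exactly the $\varepsilon$-crossing change that recovers $K$ from $U=B$. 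Hence $K_B=\psi_B(B)$ is isotopic to $K$.

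The main obstacle I anticipate is not any single homeomorphism check but rather making the diagrammatic construction rigorous: one must position $R$ so that $B$ and $G$ are \emph{simultaneously} honest meridians of $R$ (so that both slam-dunk homeomorphisms $\psi_B,\psi_G$ exist and $L$ is genuinely special) while guaranteeing that the induced twist on $\Delta_G$ realizes the chosen unknotting crossing change and nothing more. This is cleanest to carry out by drawing $L$ directly from an unknotting-crossing diagram of $K$, as in \cite{Pic1}; once that diagram is fixed, all of the verifications above reduce to routine handle-slide and slam-dunk bookkeeping.
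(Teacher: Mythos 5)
First, some context: the paper itself does not prove this lemma --- it is imported from \cite{Pic1} (Proposition 3.2), with only the remark that the construction there happens to be special --- so your sketch is not competing with an in-paper argument but attempting to supply one. Your overall shape is reasonable (a small RBG link localized at the unknotting crossing, with $B=U$, $G$ and $B$ both meridians of $R$, and $K_B$ recovered by the two slides of $B$ over $R$ that clear $\Delta_G$), and it is worth noting that with $r=\pm1$ your link is \emph{odd} in the sense of Lemma~\ref{lem:specialeven}, hence genuinely different from the $r=0$ links of \cite{Pic1}; that is fine for the statement but means you are proposing a variant, not reconstructing the cited proof. The gap is that the two assertions carrying all of the content --- (i) that $R$ can be positioned so that $R\cup B$ and $R\cup G$ are \emph{each} isotopic to $R\cup\mu_R$ while $\Delta_G$ still meets $B$ exactly in the two crossing strands, and (ii) that the two $r$-framed slides of $B$ over $R$ insert precisely the unknotting crossing change --- are asserted and then deferred to ``the diagram as in \cite{Pic1},'' which is circular, since the lemma \emph{is} \cite{Pic1} Proposition~3.2. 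The honest route is the template of Lemma~\ref{lem:untwisting} and Figure~\ref{fig:untwisting}: introduce a cancelling pair $(R,\mu_R)$ with $r=\pm1$ at the crossing of $K$, slide the two strands of $K$ over $R$ to effect the crossing change (producing $B=U$ and $G=\mu_R$), and then verify in that one picture that $R\cup B$ is a Hopf link. That last point is not automatic: $lk(R,B)=lk(R,K)$ is unchanged by the pair of oppositely oriented slides, so $R$ must already link $K$ once, and after the slides $B$ meets the obvious disk bounded by $R$ in three points, so an isotopy argument is genuinely needed to see $R\cup B\cong R\cup\mu_R$.

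Two intermediate steps are also wrong as written. You cannot slam-dunk $R$ into $G$ to obtain ``$-1/r$ surgery on $G$'': that move requires $R$ to bound a disk meeting the diagram only in $G$, whereas any disk bounded by $R$ must meet $B$, because $B\cong\mu_R$ forces $lk(R,B)=\pm1$. The only valid computation of $K_B$ here is the one you state second (slide $B$ over $R$ until it misses $\Delta_G$, then cancel $R\cup G$), and the ``single full twist on the two strands through $\Delta_G$'' description does not follow from it without an argument; as written you are identifying the outputs of two different procedures, only one of which is legitimate. Separately, the parallel case is not ``handled identically'': if $G=c$ and $l=lk(B,G)=\pm2$, the homology condition demands $rl=+2$ exactly, while $r\in\{\pm1\}$ is already pinned down (up to your sign conventions) by which twist along $c$ recovers $K$; when the signs disagree, the determinant $2l-rl^2$ equals $\pm8\neq 0$, so $L$ fails to be an RBG link at all. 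You would need to verify the sign compatibility, or reduce to an antiparallel unknotting crossing, or decouple $G$ from the crossing circle; none of these is done.
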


\begin{remark}
In \cite{Pic2} it is not emphasized that the resulting RBG link is special, but one quickly checks that the proof given there does indeed produce special RBG links.
\end{remark}

\begin{lemma}\label{lem:untwisting}
If $K$ can be unknotted by performing a tangle replacement as depicted in Figure \ref{fig:untwistingmove}, then there exists a special RBG link $L$ with $K_G$ isotopic to $K$. 

\begin{figure}
{
   \fontsize{9pt}{11pt}\selectfont
   \def\svgwidth{2.5in}
\begingroup%
  \makeatletter%
  \providecommand\color[2][]{%
    \errmessage{(Inkscape) Color is used for the text in Inkscape, but the package 'color.sty' is not loaded}%
    \renewcommand\color[2][]{}%
  }%
  \providecommand\transparent[1]{%
    \errmessage{(Inkscape) Transparency is used (non-zero) for the text in Inkscape, but the package 'transparent.sty' is not loaded}%
    \renewcommand\transparent[1]{}%
  }%
  \providecommand\rotatebox[2]{#2}%
  \newcommand*\fsize{\dimexpr\f@size pt\relax}%
  \newcommand*\lineheight[1]{\fontsize{\fsize}{#1\fsize}\selectfont}%
  \ifx\svgwidth\undefined%
    \setlength{\unitlength}{595.1918335bp}%
    \ifx\svgscale\undefined%
      \relax%
    \else%
      \setlength{\unitlength}{\unitlength * \real{\svgscale}}%
    \fi%
  \else%
    \setlength{\unitlength}{\svgwidth}%
  \fi%
  \global\let\svgwidth\undefined%
  \global\let\svgscale\undefined%
  \makeatother%
  \begin{picture}(1,0.46975823)%
    \lineheight{1}%
    \setlength\tabcolsep{0pt}%
    \put(0,0){\includegraphics[width=\unitlength,page=1]{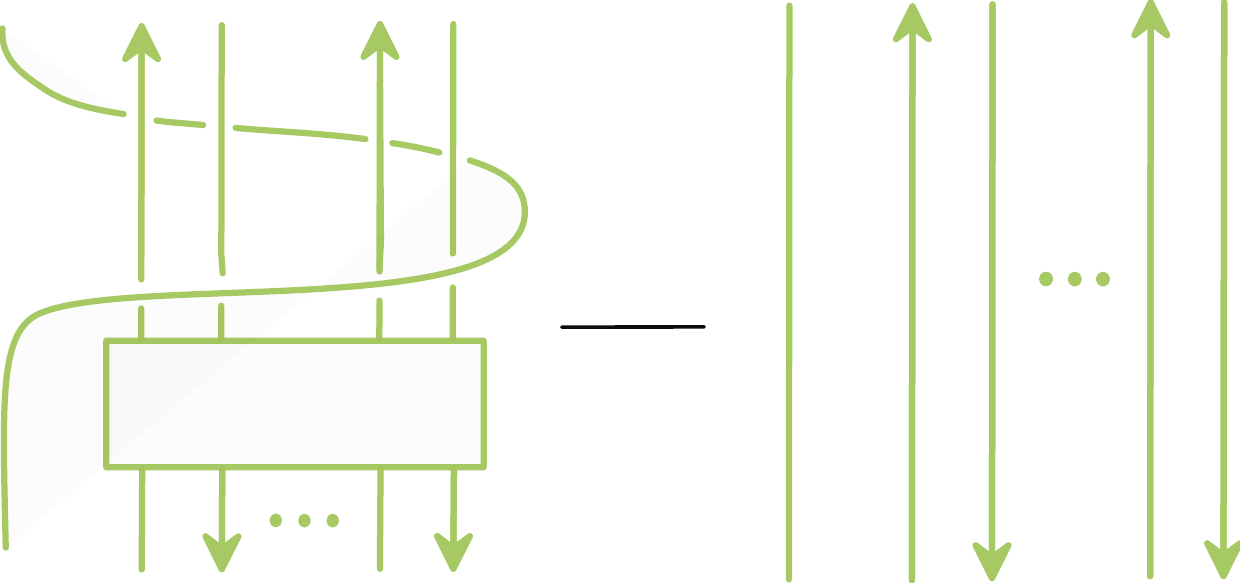}}%
    \put(0.22302736,0.13218353){\color[rgb]{0.63921569,0.77647059,0.35686275}\makebox(0,0)[lt]{\lineheight{1.25}\smash{\begin{tabular}[t]{l}$r$\end{tabular}}}}%
    \put(0,0){\includegraphics[width=\unitlength,page=2]{genunknot.pdf}}%
  \end{picture}%
\endgroup%

}
\caption{The leftmost strand may have any orientation.}\label{fig:untwistingmove} 
\end{figure}

\end{lemma}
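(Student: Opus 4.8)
The plan is to read the recipe for $K_G$ backwards. Recall from the discussion following the definition of special RBG links (and used in the proof of Lemma~\ref{lem:specialeven}) that for a special RBG link the homeomorphism $\psi_G\colon S^3_{r,0}(R,B)\to S^3$ is the slam-dunk of $B$ over $R$, and that one exhibits $K_G$ by sliding $G$ over $R$ until $G$ no longer meets the disk $\Delta_B$ bounded by $B$. Since $R$ carries the integer framing $r$, the cumulative effect of these slides is precisely the $r$-framed twisting operation governed by $R$ on the strands of $G$ that pierce $\Delta_B$; the curve $G$ itself is an unknotted meridian of $R$, and it is this twist that turns it into the (possibly knotted) curve $K_G$. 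Thus realizing a prescribed knot $K$ as some $K_G$ is exactly the problem of writing $K$ as an $r$-framed twist of an unknotted meridian — which is the data packaged by the untwisting move of Figure~\ref{fig:untwistingmove}, read in reverse.

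Concretely, I would fix from the hypothesis a diagram of $K$ in which the tangle of Figure~\ref{fig:untwistingmove} appears, encircled by the $r$-framed curve, such that the indicated tangle replacement (undoing the twist) produces the unknot $U$. I take $R$ to be this $r$-framed curve, so that $r\in\Z$, and I take $B=\mu_R$ to be a $0$-framed meridian of $R$. I then define $G$ to be the $0$-framed curve which is a meridian of $R$ in the \emph{untwisted} picture, routed through $\Delta_B$ along exactly the strands prescribed by the move, with the sign of $r$ chosen so that the twisting operation induced by $\psi_G$ is the inverse of the unknotting replacement. By construction, clearing $G$ off $\Delta_B$ by slides over $R$ then reproduces $K$ from $G$, giving $K_G\cong K$.

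It then remains to check that $L=R\cup B\cup G$ is a special RBG link. The conditions $b=g=0$ and $r\in\Z$ hold by construction, and $R\cup B\cong R\cup\mu_R$ is immediate since $B=\mu_R$. The two substantive points are the isotopy $R\cup\mu_R\cong R\cup G$ — that the curve $G$ dictated by the move is, before twisting, genuinely a meridian of $R$ (not merely an unknot in $S^3$) — and the homology condition, which for a special RBG link reduces to $l=0$ or $rl=2$, where $l=lk(B,G)$ is the algebraic intersection number of $G$ with $\Delta_B$. This is exactly the quantity controlled by the freedom in the orientation of the leftmost strand noted in the caption of Figure~\ref{fig:untwistingmove}: orienting that strand so that the signed count of intersections of $G$ with $\Delta_B$ cancels forces $l=0$, whence $H_1(S^3_{r,b,g}(L);\Z)\cong\Z$ automatically.

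The main obstacle is the isotopy claim of the preceding paragraph together with the on-the-nose identification $K_G\cong K$: one must verify, by isotoping the local picture of Figure~\ref{fig:untwistingmove}, both that $G$ is an unknotted meridian of $R$ in the link $R\cup G$ and that the slides clearing it off $\Delta_B$ recover $K$ exactly. Both are diagrammatic and routine once the picture is drawn, but the bookkeeping of the framing and of the sign of $r$ — so that the $\psi_G$-twist is precisely the inverse of the given unknotting replacement — is the delicate part, and is handled in the same manner as the slam-dunk/handle-slide analysis in the proof of Lemma~\ref{lem:specialeven}.
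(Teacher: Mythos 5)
Your construction is the one the paper uses---$R$ is the $r$-framed curve of the untwisting move, $B$ is a $0$-framed meridian of $R$, and $G$ is the unknotted image of $K$, with $K_G\cong K$ checked by undoing the slam-dunk slides---except that the paper runs the moves forward (Figure~\ref{fig:untwisting}: introduce the cancelling pair $(R,r)\cup(B,0)$ into the surgery diagram of $S^3_0(K)$, then slide) and leaves the verification to the picture, just as you leave it to a picture you describe but do not draw. The one place your reasoning actually goes wrong is the homology condition. You argue that $l=lk(B,G)=0$ because the caption of Figure~\ref{fig:untwistingmove} lets you \emph{choose} the orientation of the leftmost strand so that the signed intersections of $G$ with $\Delta_B$ cancel. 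It does not: the relative orientation of the two strands is part of the data of $K$ and its diagram, and the caption is asserting that the \emph{hypothesis} covers either orientation, so the lemma must be proved in both cases. If the strands crossed $\Delta_B$ coherently you would have $l=\pm 2$, and for general $r$ the determinant condition $l(2-rl)=0$ would then fail, so your justification does not go through as stated. The forward direction sidesteps this entirely: since $S^3_{r,b,g}(L)$ is obtained from the $0$-framed diagram of $K$ by introducing a cancelling pair and handle sliding, it is homeomorphic to $S^3_0(K)$, and $H_1\cong\Z$ is automatic with no need to evaluate $lk(B,G)$. To repair your version, either verify from the local picture that the two intersection points of $G$ with $\Delta_B$ necessarily have opposite signs, or simply note that your $L$ is reachable from $(K,0)$ by homeomorphism-preserving Kirby moves and import the homology condition from $S^3_0(K)$.
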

\begin{proof}
In Figure \ref{fig:untwisting} we demonstrate a homeomorphism from $S^3_0(K)$ to $S^3_{r,b,g}(R,B,G)$ for an RBG link $L$, and it is evident from the figure that $K$ is $K_G$. 
\end{proof}
\begin{figure}
{
   \fontsize{9pt}{11pt}\selectfont
   \def\svgwidth{4.9in}
   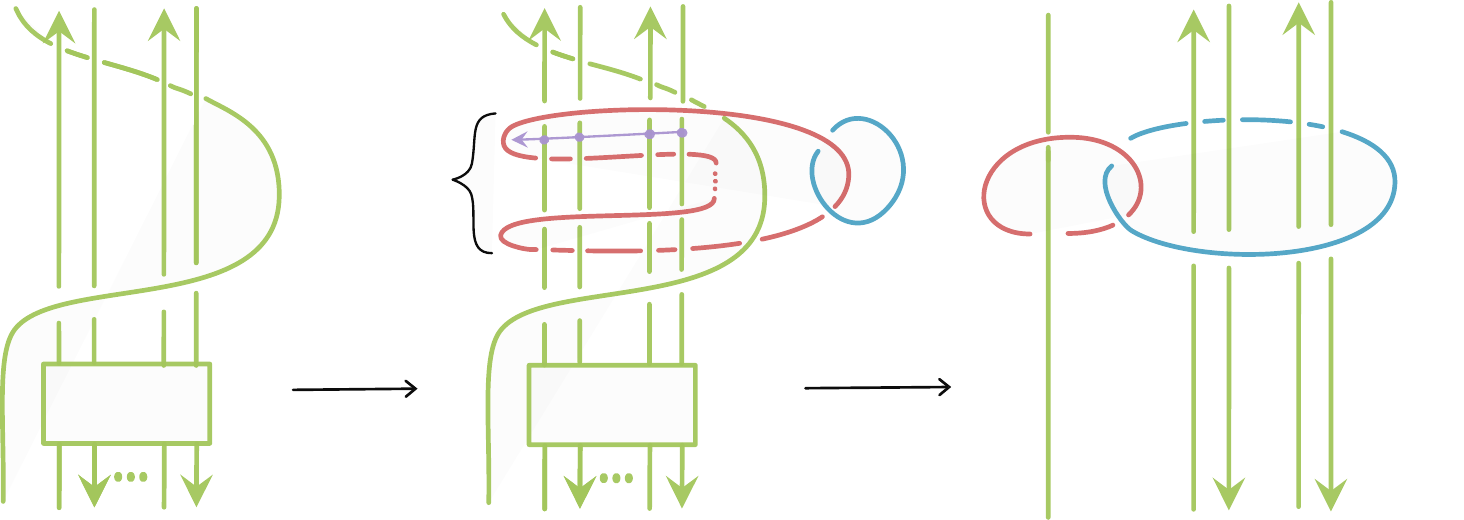
}
\caption{A homeomorphism from $S^3_0(K)$ to surgery on an RBG link $L$.}\label{fig:untwisting}
\end{figure}

\begin{remark}
In fact, all knots can be unknotted by performing a tangle replacement as depicted in Figure \ref{fig:untwistingmove} with $r=0$. (This can be readily deduced from \cite[Lemma 1]{Suz}, for example.)  However when $r=0$ one can check that the RBG link from the proof of Lemma \ref{lem:untwisting} yields knots with $K_B=K_G$. 
\end{remark}
We now give a criterion under which special RBG links produce pairs of slice knots; for our purposes this will be a setting we will want to avoid.  The following lemma is  analogous to Theorem 2.3 of \cite{Pic1}.

\begin{lemma}\label{lem:splitslice}
If $L$ is a special RBG link and $B\cup G$ is split then $K_B$ and $K_G$ are ribbon.
\end{lemma}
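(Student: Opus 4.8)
The plan is to prove the statement for $K_G$ only: the conditions defining a special RBG link, together with the hypothesis that $B\cup G$ is split, are all symmetric under interchanging $B$ and $G$ (this swaps $\psi_B\leftrightarrow\psi_G$ and hence $K_B\leftrightarrow K_G$), so the ribbonness of $K_B$ will follow by the identical argument. First I would record the explicit diagrammatic description of $K_G$ already set up in this section. Since $b=0$ and $B\cong\mu_R$, the homeomorphism $\psi_G\colon S^3_{r,0}(R,B)\to S^3$ is given by the slam-dunk of $B$ over $R$, and $K_G$ is exhibited by sliding $G$ over the $r$-framed curve $R$ until it is disjoint from the disk $\Delta_B$ bounded by $B$, after which $R\cup B$ becomes a standard cancelling pair that is erased. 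Thus $K_G$ is the unknot $G$ with one band attached for each point of $G\cap\Delta_B$, each band guiding $G$ along a parallel copy of $R$.

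Next I would use the split hypothesis to organize these bands into cancelling pairs. Since $B\cup G$ is split we have $lk(B,G)=l=0$, so the points of $G\cap\Delta_B$ carry signs summing to zero; more importantly, a splitting $2$-sphere $S$ lets me realize these intersections in geometrically cancelling pairs rather than merely algebraically cancelling ones. Concretely, I would put $\Delta_B$ in general position with respect to $S$ and arrange, using that the single point $R\cap\Delta_B$ may be taken to lie on the $B$-side of $S$, that the intersections of $G$ with $\Delta_B$ occur on innermost $G$-side subdisks of $\Delta_B$ cut off by circles of $\Delta_B\cap S$. The corresponding slides of $G$ over $R$ then appear as adjacent, oppositely-oriented fingers, each reaching out along a band, around a parallel copy of $R$, and back.

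Finally I would assemble the ribbon disk. The base unknot $G$ bounds a disk, and each oppositely-oriented finger-pair bounds an embedded disk in $B^4$ whose only critical points are a minimum and saddles: the $+R$ and $-R$ copies are reverse-parallel, hence cobound an obvious embedded annulus, and the pair cancels up to ribbon singularities in $S^3$. Taking the union over all pairs yields an immersed disk for $K_G$ in $S^3$ with only ribbon singularities, i.e.\ a ribbon disk. The main obstacle is exactly this last upgrade, from the algebraic cancellation $l=0$ to a genuine ribbon cancellation: one must exploit the full strength of splitness (not merely the vanishing linking number) to guarantee that the fingers can be made adjacent and that the banded-in copies of the knotted curve $R$ really pair off, since even a single unpaired copy of $R$ would obstruct ribbonness whenever $R$ is itself not ribbon. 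This is the analogue of Theorem~2.3 of \cite{Pic1}, and I would follow that argument's treatment of the fingers, adapted to the slam-dunk description above.
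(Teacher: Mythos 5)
Your setup (the symmetry in $B$ and $G$, and the description of $K_G$ as $G$ band-summed with one framed copy of $R$ for each point of $G\cap\Delta_B$) matches the paper's, but the final assembly has a genuine gap, and it sits exactly where you flag ``the main obstacle.'' First, the surface you build is not a disk: a disk for $G$, two bands, and one annulus per oppositely-oriented pair has Euler characteristic $1-2k$ for $k$ pairs, hence is a genus-$k$ surface with one boundary component; no compression argument is supplied, and compressions pushed into $B^4$ generically introduce local maxima, which is precisely what a ribbon disk must avoid. Second, and more seriously, the operation ``band a knot onto both boundary components of an embedded annulus around $R$'' does not preserve ribbonness or even sliceness: with trivial bands it produces a connected sum with $R\# R^r$ ($R^r$ the reverse of $R$), e.g.\ the granny knot when $R$ is a trefoil, which has signature $\pm 4$. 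So ``the $+R$ and $-R$ copies are reverse-parallel, hence the pair cancels up to ribbon singularities'' is false as a general principle; everything depends on the position of the two bands, which is exactly where splitness of $B\cup G$ must do real work. Your proposed use of the splitting sphere is also caught between two horns: if the paired intersection points of $G$ with $\Delta_B$ were genuinely geometrically cancelling (adjacent fingers bounding embedded Whitney disks), you could isotope $G$ off $\Delta_B$ without sliding at all and $K_G$ would be the unknot, which is false in general; if they are only algebraically paired, the annuli and bands sit in no controlled position and ribbonness does not follow.

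The paper's proof avoids all of this by never banding on copies of the (possibly knotted) curve $R$. It uses splitness to arrange that $B$ can be standardized to $\mu_R$ by crossing changes with $R$ alone, trades each such crossing change for a fission band move that splits off an unknotted meridian of $R$, and thereby exhibits $K_B$ (after cancelling $R\cup G$) as a fusion of an unlink of meridians bounding disjoint embedded disks; a fusion of an unlink is ribbon. The essential point your argument misses is that the pieces being fused must be unknots bounding disjoint disks, not reverse-parallel copies of $R$.
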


\begin{proof}
Since $B$ and $G$ are split, there is some sequence of crossing changes of $B$ with $R$ which turns $L$ into the link $L'=R\cup\mu_R\cup\mu_R$ in the right frame of Figure \ref{fig:slicelemma1}. Each of these crossing changes can instead be exhibited by banding $B$ to itself, as in the left frame of Figure \ref{fig:slicelemma2}, at the expense of generating an additional (blue) meridian of $R$. Therefore, by adding bands from the blue component to itself $L$ can be turned into a link $J'$ as in the right frame of Figure \ref{fig:slicelemma2} (with some number of blue meridians). Thus we can isotope the link $L$ into the form shown in the left frame of Figure \ref{fig:slicelemma3}; i.e. so that $L$ looks like $J'$ with (dual) bands connecting the blue components. 

\begin{figure}
{
   \fontsize{9pt}{11pt}\selectfont
   \def\svgwidth{5in}
\begingroup%
  \makeatletter%
  \providecommand\color[2][]{%
    \errmessage{(Inkscape) Color is used for the text in Inkscape, but the package 'color.sty' is not loaded}%
    \renewcommand\color[2][]{}%
  }%
  \providecommand\transparent[1]{%
    \errmessage{(Inkscape) Transparency is used (non-zero) for the text in Inkscape, but the package 'transparent.sty' is not loaded}%
    \renewcommand\transparent[1]{}%
  }%
  \providecommand\rotatebox[2]{#2}%
  \newcommand*\fsize{\dimexpr\f@size pt\relax}%
  \newcommand*\lineheight[1]{\fontsize{\fsize}{#1\fsize}\selectfont}%
  \ifx\svgwidth\undefined%
    \setlength{\unitlength}{714.31278992bp}%
    \ifx\svgscale\undefined%
      \relax%
    \else%
      \setlength{\unitlength}{\unitlength * \real{\svgscale}}%
    \fi%
  \else%
    \setlength{\unitlength}{\svgwidth}%
  \fi%
  \global\let\svgwidth\undefined%
  \global\let\svgscale\undefined%
  \makeatother%
  \begin{picture}(1,0.18325063)%
    \lineheight{1}%
    \setlength\tabcolsep{0pt}%
    \put(0,0){\includegraphics[width=\unitlength,page=1]{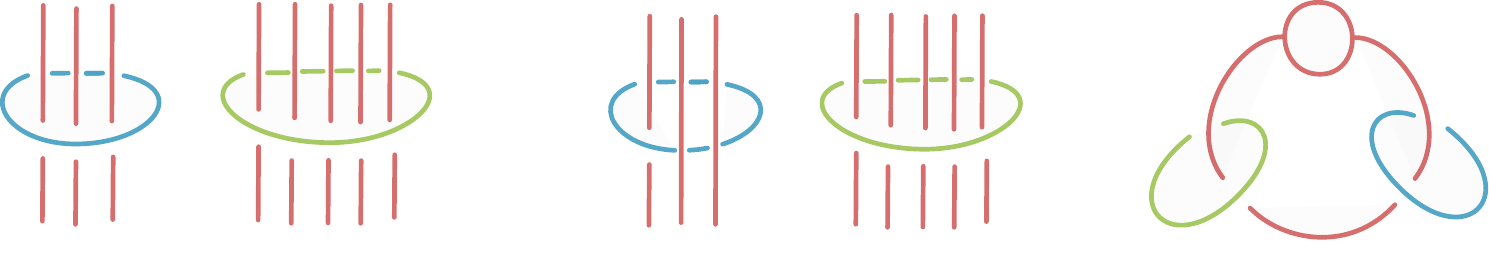}}%
    \put(0.8764075,0.15015788){\color[rgb]{0.82352941,0.4,0.4}\makebox(0,0)[lt]{\lineheight{1.25}\smash{\begin{tabular}[t]{l}$R$\end{tabular}}}}%
    \put(0.13108441,0.06159883){\color[rgb]{0.63921569,0.77647059,0.35686275}\makebox(0,0)[lt]{\lineheight{1.25}\smash{\begin{tabular}[t]{l}$0$\end{tabular}}}}%
    \put(0.08480832,0.0611944){\color[rgb]{0.30196078,0.63921569,0.76862745}\makebox(0,0)[lt]{\lineheight{1.25}\smash{\begin{tabular}[t]{l}$0$\end{tabular}}}}%
    \put(0.08741628,0.14839338){\color[rgb]{0.82352941,0.4,0.4}\makebox(0,0)[lt]{\lineheight{1.25}\smash{\begin{tabular}[t]{l}$0$\end{tabular}}}}%
    \put(0.71799546,0.09523131){\color[rgb]{0,0,0}\makebox(0,0)[lt]{\lineheight{1.25}\smash{\begin{tabular}[t]{l}=\end{tabular}}}}%
    \put(0.32437391,0.09925381){\color[rgb]{0,0,0}\makebox(0,0)[lt]{\lineheight{1.25}\smash{\begin{tabular}[t]{l}$\longrightarrow$\end{tabular}}}}%
  \end{picture}%
\endgroup%

}
\caption{Crossing changes of $B$ with $R$ to change $L$ (left frame) into $L'$ (right frame).} 
\label{fig:slicelemma1}
\end{figure}

\begin{figure}
{
   \fontsize{9pt}{11pt}\selectfont
   \def\svgwidth{5in}
\begingroup%
  \makeatletter%
  \providecommand\color[2][]{%
    \errmessage{(Inkscape) Color is used for the text in Inkscape, but the package 'color.sty' is not loaded}%
    \renewcommand\color[2][]{}%
  }%
  \providecommand\transparent[1]{%
    \errmessage{(Inkscape) Transparency is used (non-zero) for the text in Inkscape, but the package 'transparent.sty' is not loaded}%
    \renewcommand\transparent[1]{}%
  }%
  \providecommand\rotatebox[2]{#2}%
  \newcommand*\fsize{\dimexpr\f@size pt\relax}%
  \newcommand*\lineheight[1]{\fontsize{\fsize}{#1\fsize}\selectfont}%
  \ifx\svgwidth\undefined%
    \setlength{\unitlength}{635.09971619bp}%
    \ifx\svgscale\undefined%
      \relax%
    \else%
      \setlength{\unitlength}{\unitlength * \real{\svgscale}}%
    \fi%
  \else%
    \setlength{\unitlength}{\svgwidth}%
  \fi%
  \global\let\svgwidth\undefined%
  \global\let\svgscale\undefined%
  \makeatother%
  \begin{picture}(1,0.2052365)%
    \lineheight{1}%
    \setlength\tabcolsep{0pt}%
    \put(0,0){\includegraphics[width=\unitlength,page=1]{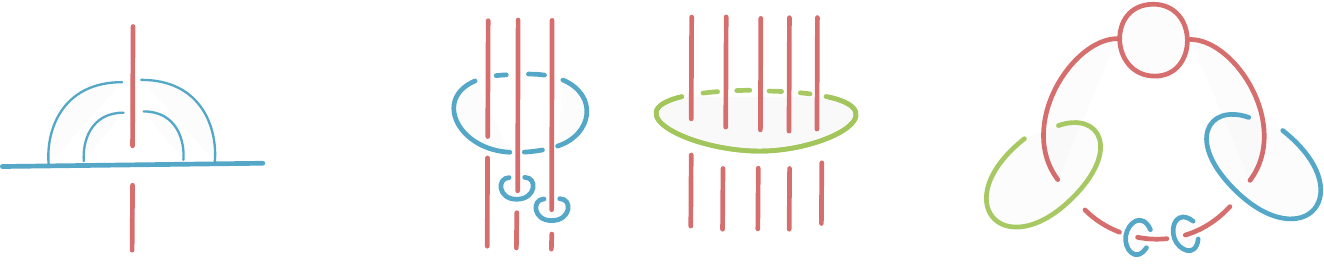}}%
    \put(0.86178284,0.1688864){\color[rgb]{0.82352941,0.4,0.4}\makebox(0,0)[lt]{\lineheight{1.25}\smash{\begin{tabular}[t]{l}$R$\end{tabular}}}}%
  \end{picture}%
\endgroup%

}
\caption{Banding $B$ to itself to turn $L$ into $J'$ (right frame).}
\label{fig:slicelemma2}
\end{figure}

\begin{figure}
{
   \fontsize{9pt}{11pt}\selectfont
   \def\svgwidth{3in}
\begingroup%
  \makeatletter%
  \providecommand\color[2][]{%
    \errmessage{(Inkscape) Color is used for the text in Inkscape, but the package 'color.sty' is not loaded}%
    \renewcommand\color[2][]{}%
  }%
  \providecommand\transparent[1]{%
    \errmessage{(Inkscape) Transparency is used (non-zero) for the text in Inkscape, but the package 'transparent.sty' is not loaded}%
    \renewcommand\transparent[1]{}%
  }%
  \providecommand\rotatebox[2]{#2}%
  \newcommand*\fsize{\dimexpr\f@size pt\relax}%
  \newcommand*\lineheight[1]{\fontsize{\fsize}{#1\fsize}\selectfont}%
  \ifx\svgwidth\undefined%
    \setlength{\unitlength}{393.50564575bp}%
    \ifx\svgscale\undefined%
      \relax%
    \else%
      \setlength{\unitlength}{\unitlength * \real{\svgscale}}%
    \fi%
  \else%
    \setlength{\unitlength}{\svgwidth}%
  \fi%
  \global\let\svgwidth\undefined%
  \global\let\svgscale\undefined%
  \makeatother%
  \begin{picture}(1,0.36628743)%
    \lineheight{1}%
    \setlength\tabcolsep{0pt}%
    \put(0,0){\includegraphics[width=\unitlength,page=1]{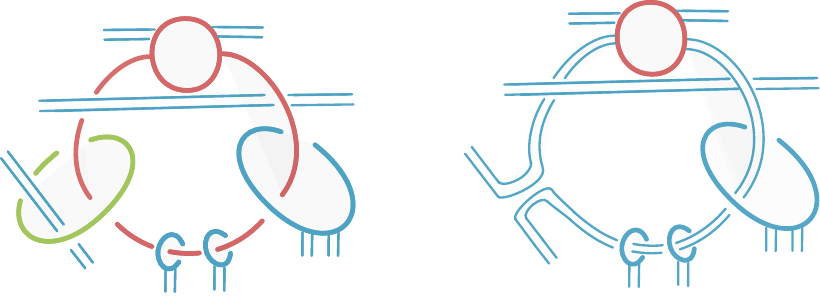}}%
    \put(0.20607876,0.28547397){\color[rgb]{0.82352941,0.4,0.4}\makebox(0,0)[lt]{\lineheight{1.25}\smash{\begin{tabular}[t]{l}$R$\end{tabular}}}}%
    \put(0.77781686,0.30601547){\color[rgb]{0.82352941,0.4,0.4}\makebox(0,0)[lt]{\lineheight{1.25}\smash{\begin{tabular}[t]{l}$R$\end{tabular}}}}%
    \put(0.85883731,0.0134384){\color[rgb]{0.30196078,0.63921569,0.76862745}\makebox(0,0)[lt]{\lineheight{1.25}\smash{\begin{tabular}[t]{l}$0$\end{tabular}}}}%
    \put(0.29524161,0.01899654){\color[rgb]{0.30196078,0.63921569,0.76862745}\makebox(0,0)[lt]{\lineheight{1.25}\smash{\begin{tabular}[t]{l}$0$\end{tabular}}}}%
    \put(0.01680596,0.01841966){\color[rgb]{0.63921569,0.77254902,0.35686275}\makebox(0,0)[lt]{\lineheight{1.25}\smash{\begin{tabular}[t]{l}$0$\end{tabular}}}}%
    \put(0.3514924,0.26646695){\color[rgb]{0.82352941,0.4,0.4}\makebox(0,0)[lt]{\lineheight{1.25}\smash{\begin{tabular}[t]{l}$0$\end{tabular}}}}%
  \end{picture}%
\endgroup%

}
\caption{The dual bands from $J'$ give a diagram of $L$ from which it is easy to spot that $K_B$ is slice.}
\label{fig:slicelemma3}
\end{figure}

From this picture of $L$, we can easily cancel $G\cup R$ so that we are left with a surgery diagram of $K_B$; to perform the cancellation we just first  have to slide every band that geometrically links $G$ across $R$. As such, we see that $K_B$ has a diagram of the form shown in the right frame of Figure \ref{fig:slicelemma3}. That $K_B$ is ribbon follows from the picture. The claim that $K_G$ is ribbon  follows by symmetry of hypothesis. 
\end{proof}

\subsection{Small RBG links}

\begin{definition}
\label{def:small}
An RBG link $L$ is \emph{small} if $L$ is special and in addition 
\begin{itemize}
\item $B$ bounds a properly embedded disk $\Delta_B$ that intersects $R$ in exactly one point, and intersects $G$ in at most $2$ points.
\item $G$ bounds a properly embedded disk $\Delta_G$ that intersects $R$ in exactly one point, and intersects $B$ in at most $2$ points.
\end{itemize} 
(All intersections are required to be transverse.)
\end{definition}

\begin{example}\label{ex:smallrbg}
Consider the small RBG link $L$ and associated knots $K_B$ and $K_G$ in Figure \ref{fig:smallRBG}. Our natural diagrams of $K_B$ and $K_G$ have $20$ and $16$ crossings, respectively. Using SnapPy to identify the complements, we find that in fact $K_B$ is the 12 crossing knot $12n309$ and $K_G$ is the 14 crossing knot $14n14254$. To the authors' knowledge, this pair minimizes $c(K)+c(K')$ among all pairs of distinct knots with $S^3_0(K)\cong S^3_0(K')$ in the literature. 
\end{example}

Notice that the diagrammatic conditions making a special RBG link small ensure that one only has to perform at most two slides of $G$ over $R$ in order to exhibit the knot $K_G$ (resp. two slides of $B$ over $R$ to exhibit $K_B$). This helps keep the crossing number of $K_G$ (resp. $K_B$) somewhat small.

 In fact, for a small RBG link to produce $K_B\neq K_G$ we show that two slides are necessary.

\begin{proposition}\label{prop:notthatsmall}
If $L$ is a small RBG link with $\Delta_B$ intersecting $G$ in less than $2$ points then $K_B=K_G$.
\end{proposition}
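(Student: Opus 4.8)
The plan is to read off $K_G$ and $K_B$ from the slide-and-slam-dunk description given just before Lemma~\ref{lem:specialeven}, and then to pin down their knot types using the numerical constraints from smallness together with the RBG condition. I will write $p$ for the number of points of $\Delta_B\cap G$, $q$ for the number of points of $\Delta_G\cap B$, and $l=lk(B,G)$. Recall that $K_G$ is exhibited by performing $p$ slides of $G$ over $R$ and then slam-dunking $B$ and $R$ away, and symmetrically $K_B$ comes from $q$ slides of $B$ over $R$. I will use three facts: for a special RBG link either $l=0$ or $rl=2$; smallness gives $q\le 2$; and geometric and algebraic intersection numbers agree mod $2$, so $p\equiv q\equiv |l|\pmod 2$. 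The hypothesis $p\le 1$ then splits into two cases.

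First I would handle $p=1$. Here $l$ is odd with $|l|\le 1$, so $l=\pm 1$; then $rl=2$ forces $r=\pm 2$, while $q$ is odd and $\le 2$, hence $q=1$. A single slide of the tiny meridian $G$ over $R$ replaces the strand of $G$ running through $\Delta_B$ by a parallel $r$-framed copy of $R$, and since $G$ is an unknotted meridian the slid curve is isotopic to that copy, i.e.\ to $R$. It is now disjoint from $\Delta_B$, so the slam-dunk deletes $B$ and $R$ without changing it, giving $K_G\cong R$. Running the same argument with $B$ and $G$ interchanged (using $q=1$) yields $K_B\cong R$, so $K_B=K_G$.

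Next I would treat $p=0$. Now $G$ is already disjoint from $\Delta_B$, so no slide occurs and, after the slam-dunk deletes $R$, the meridian $G$ becomes unknotted; thus $K_G$ is the unknot. In this case $l=0$ and $q\in\{0,2\}$. If $q=0$ the symmetric argument makes $K_B$ the unknot as well. If $q=2$, the two points of $\Delta_G\cap B$ have opposite signs (since $l=0$), so the two slides of $B$ over $R$ splice in oppositely oriented parallel copies $\ell_R$ and $-\ell_R$ of the $r$-framed longitude; because $B$ is a small meridian these two copies sit adjacently along $B$ and cobound the framing annulus between the two pushoffs of $R$, so I can cancel them against each other and recover an unknotted curve. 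Hence $K_B$ is again the unknot and $K_B=K_G$.

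The step I expect to be the main obstacle is this last cancellation: one must verify diagrammatically that in the $p=0$, $q=2$, $l=0$ case the two oppositely oriented slides genuinely annihilate (equivalently, that the inserted $\pm\ell_R$ are adjacent and bound the framing annulus), and likewise that the single slide in the $p=1$ case produces a clean parallel copy of $R$ rather than some band-sum. The remainder is just bookkeeping with the linking- and parity constraints.
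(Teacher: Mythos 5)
Your case analysis and parity bookkeeping are fine, but both of the geometric claims that actually carry the proof are asserted rather than proved, and neither is automatic. In the $p=1$ case, the statement that the slid $G$ ``is isotopic to a parallel copy of $R$'' is exactly the hard point. The hypothesis $R\cup G\cong R\cup\mu_R$ lets you view $G$ as a meridian of $R$, but the slide band is dictated by $\Delta_B$, and the resulting curve is a band sum $G\#_b R'$ whose isotopy type (even after $R$ and $B$ are erased) depends on how the band and $G$ sit relative to $\Delta_G$; a band sum of an unknot with $R'$ along a band that punctures the disk bounded by the unknot need not be isotopic to $R'$. Controlling this is precisely the content of the paper's Lemma~\ref{lem:delta1boring} (an innermost-circle argument normalizing $\Delta_B\cap\Delta_G$ to a single arc) and Lemma~\ref{lem:delta1} (a standard local model for a neighborhood of $\Delta_B\cup\Delta_G$, from which the two slid diagrams are seen to be isotopic to each other). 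Note also that the paper only concludes $K_B=K_G$ from that local model; your stronger claim $K_B\cong K_G\cong R$ is an additional assertion that would itself require the same normalization to justify.

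The $p=0$, $q=2$ subcase has the same problem: the two oppositely oriented pushoffs $\pm R'$ spliced into $B$ cancel only if they cobound an annulus disjoint from the rest of the curve, and the two points of $\Delta_G\cap B$ need not be adjacent along $B$, nor need the arcs of $B$ between the band feet be unknotted or unlinked from the rest of the diagram. The paper sidesteps these slides entirely: since $\Delta_B$ is disjoint from $G$ and meets $R$ once, it can be used to shrink $B$ to a tiny meridian of $R$ without disturbing $G$; then $R\cup G\cong R\cup\mu_R$ identifies the whole link with $\mu_R\cup R\cup\mu_R$, and both slam dunks visibly give the unknot. To complete your argument you would need either to import the paper's two lemmas for $p=1$ and the ``shrink $B$ along $\Delta_B$ first'' trick for $p=0$, or to supply your own proof that the disks and bands can be put in the standard position your cancellations presuppose.
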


To prove the proposition, we require two lemmas.

\begin{lemma}\label{lem:delta1}
Let $L$ be a small RBG link and $\Delta_B$ and $\Delta_G$ be disks as in Definition~\ref{def:small}. If $\Delta_B\cap\Delta_G$ is a single non-proper arc in each of $\Delta_B$ and $\Delta_G$, then $K_B=K_G$.
\end{lemma}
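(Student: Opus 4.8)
The plan is to unpack the hypothesis geometrically and then reduce the comparison of $K_B$ and $K_G$ to a single local computation. First I would record what "a single arc, non-proper in each of $\Delta_B$ and $\Delta_G$'' forces: the two endpoints of $\Delta_B\cap\Delta_G$ lie on $\partial\Delta_B\cup\partial\Delta_G=B\cup G$, and non-properness in $\Delta_B$ (an endpoint interior to $\Delta_B$) means an endpoint on $G$, while non-properness in $\Delta_G$ means an endpoint on $B$. Since the arc has exactly two endpoints, one lies on $B$ (a point $p$ of $B\cap\Delta_G$) and one on $G$ (a point $q$ of $G\cap\Delta_B$). Thus $B$ meets $\Delta_G$ in the single point $p$, $G$ meets $\Delta_B$ in the single point $q$, and $\operatorname{lk}(B,G)=\pm1$; that is, near $\Delta_B\cup\Delta_G$ the sublink $B\cup G$ sits as a standard clasp. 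I would then fix a ball $\mathcal B\supset\Delta_B\cup\Delta_G$ in which $B\cup G$ together with the disks is this standard clasp and in which $R$ appears only as arc(s) piercing $\Delta_B$ once (at $x_B$) and $\Delta_G$ once (at $x_G$), so that all of the knotting of $R$ and the framing $r$ lie outside $\mathcal B$ and are common to both constructions.

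Next I would recall the recipe for the two knots (cf.\ Figure~\ref{fig:smallRBG}): because $G$ meets $\Delta_B$ only at $q$, the knot $K_G$ is produced by a \emph{single} handle slide of $G$ over $R$ removing that intersection, followed by the slam-dunk homeomorphism $\psi_G$ cancelling $R\cup B$; symmetrically, $K_B$ comes from one slide of $B$ over $R$ clearing $p$, followed by $\psi_B$ cancelling $R\cup G$. The slide band for $K_G$ may be taken inside $\Delta_B$ from $q$ to $x_B$, and that for $K_B$ inside $\Delta_G$ from $p$ to $x_G$. Orienting $R$ and choosing both bands to respect this orientation, each of $K_G$ and $K_B$ becomes an $r$-framed parallel copy of $R$ that has been altered only inside $\mathcal B$; outside $\mathcal B$ the two knots coincide, both equal to the pushoff of $R$ restricted to the complement of $\mathcal B$, traversed in the same direction with the same framing. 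Consequently $K_G$ and $K_B$ differ only through the tangles $K_G\cap\mathcal B$ and $K_B\cap\mathcal B$, which have identical endpoints on $\partial\mathcal B$, and it suffices to produce an isotopy rel $\partial\mathcal B$ between them.

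This last reduction is where the single-arc hypothesis does the essential work: the clasp arc $\alpha$, with ends $p\in B$ and $q\in G$, is exactly the guide for a finger move that pushes the cleared copy of $G$ across $\Delta_B$ onto the cleared copy of $B$, exhibiting the two local tangles as isotopic rel $\partial\mathcal B$. I expect this step---making the local isotopy explicit, with careful bookkeeping of the $r$-framing and of the handedness of the clasp---to be the main obstacle, and the natural way to present it is a short sequence of pictures in the model ball $\mathcal B$. (When $\Delta_B\cap\Delta_G$ is instead empty the same scheme degenerates to the split situation of Lemma~\ref{lem:splitslice}, where $K_B$ and $K_G$ are even ribbon; the point here is that a \emph{single} clasp between the spanning disks still introduces no difference between the two associated knots.)
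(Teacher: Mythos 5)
Your argument is correct and follows essentially the same route as the paper: both proofs localize the whole configuration to a standard regular neighborhood (a ball) of $\Delta_B\cup\Delta_G$ pierced once by $R$ through each disk, observe that the slide-and-slam-dunk recipes for $K_B$ and $K_G$ yield knots that coincide outside that neighborhood, and reduce to an isotopy rel boundary of the two local tangles inside it. The paper handles that final local step by exhibiting the isotopy in Figure~\ref{fig:smallRBG1int}, which is exactly the ``short sequence of pictures in the model ball'' you defer to.
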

\begin{proof}
Since we also know that both $\Delta_B$ and $\Delta_G$ intersect $R$ in exactly one point, we can isotope $L$ in $S^3$ so that a neighborhood of $\Delta_B\cup\Delta_G$ looks as in the middle frame of Figure \ref{fig:smallRBG1int}. Performing the slam dunks to exhibit $K_B$ and $K_G$ from this picture of $L$ yields diagrams of $K_B$ and $K_G$ as in the left and right frames of Figure \ref{fig:smallRBG1int}; these diagrams are identical outside the neighborhood shown and isotopic inside the neighborhood shown. 
\end{proof}
\begin{figure}
{
   \fontsize{9pt}{11pt}\selectfont
   \def\svgwidth{4in}
\begingroup%
  \makeatletter%
  \providecommand\color[2][]{%
    \errmessage{(Inkscape) Color is used for the text in Inkscape, but the package 'color.sty' is not loaded}%
    \renewcommand\color[2][]{}%
  }%
  \providecommand\transparent[1]{%
    \errmessage{(Inkscape) Transparency is used (non-zero) for the text in Inkscape, but the package 'transparent.sty' is not loaded}%
    \renewcommand\transparent[1]{}%
  }%
  \providecommand\rotatebox[2]{#2}%
  \newcommand*\fsize{\dimexpr\f@size pt\relax}%
  \newcommand*\lineheight[1]{\fontsize{\fsize}{#1\fsize}\selectfont}%
  \ifx\svgwidth\undefined%
    \setlength{\unitlength}{310.74865723bp}%
    \ifx\svgscale\undefined%
      \relax%
    \else%
      \setlength{\unitlength}{\unitlength * \real{\svgscale}}%
    \fi%
  \else%
    \setlength{\unitlength}{\svgwidth}%
  \fi%
  \global\let\svgwidth\undefined%
  \global\let\svgscale\undefined%
  \makeatother%
  \begin{picture}(1,0.23487287)%
    \lineheight{1}%
    \setlength\tabcolsep{0pt}%
    \put(0.37131836,0.02488774){\color[rgb]{0.83137255,0.40392157,0.40392157}\makebox(0,0)[lt]{\lineheight{1.25}\smash{\begin{tabular}[t]{l}$\pm 2$\end{tabular}}}}%
    \put(0,0){\includegraphics[width=\unitlength,page=1]{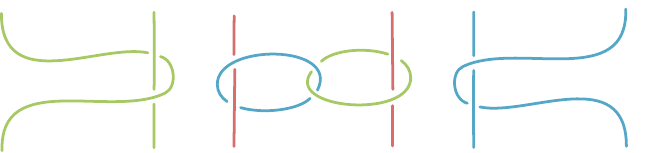}}%
    \put(0.05706209,0.03550092){\color[rgb]{0.63921569,0.77647059,0.35686275}\makebox(0,0)[lt]{\lineheight{1.25}\smash{\begin{tabular}[t]{l}$0$\end{tabular}}}}%
    \put(0.85950226,0.0431634){\color[rgb]{0.30196078,0.63921569,0.76862745}\makebox(0,0)[lt]{\lineheight{1.25}\smash{\begin{tabular}[t]{l}$0$\end{tabular}}}}%
    \put(0.37721808,0.16058152){\color[rgb]{0.30196078,0.63921569,0.76862745}\makebox(0,0)[lt]{\lineheight{1.25}\smash{\begin{tabular}[t]{l}$0$\end{tabular}}}}%
    \put(0.54880817,0.03599885){\color[rgb]{0.63921569,0.77647059,0.35686275}\makebox(0,0)[lt]{\lineheight{1.25}\smash{\begin{tabular}[t]{l}$0$\end{tabular}}}}%
    \put(0,0){\includegraphics[width=\unitlength,page=2]{smallrbg1int.pdf}}%
  \end{picture}%
\endgroup%

}
\caption{A small RBG link with $\Delta_B\cap\Delta_G$ a single arc in each disk.}
\label{fig:smallRBG1int}
\end{figure}

\begin{lemma}\label{lem:delta1boring}
Let $L$ be a small RBG link and $\Delta_B$ and $\Delta_G$ be disks as in Definition~\ref{def:small}. If $|\Delta_B \cap G|=1$ then, after an isotopy of $\Delta_G$ rel boundary, we can arrange so that $\Delta_B\cap \Delta_G$ is as described in Lemma \ref{lem:delta1}. 
\end{lemma}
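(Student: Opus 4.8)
The plan is to put $\Delta_B$ and $\Delta_G$ in general position and then analyze the one-manifold $\Delta_B\cap\Delta_G$ by a parity count, afterwards removing the closed components by a standard innermost-disk argument. First I would isotope $\Delta_G$ slightly (rel $\partial$) so that $\Delta_B$ and $\Delta_G$ meet transversely; then $\Delta_B\cap\Delta_G$ is a disjoint union of circles and arcs. Since $B=\partial\Delta_B$ and $G=\partial\Delta_G$ are distinct, hence disjoint, components of the link $L$, the endpoints of these arcs lie in $(B\cap\interior\Delta_G)\sqcup(G\cap\interior\Delta_B)$.

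The key observation is a parity count. The total number of arc-endpoints, $|B\cap\interior\Delta_G|+|G\cap\interior\Delta_B|$, is even. By hypothesis $|G\cap\interior\Delta_B|=|\Delta_B\cap G|=1$, and the smallness of $L$ (Definition~\ref{def:small}) gives $|B\cap\interior\Delta_G|=|\Delta_G\cap B|\le 2$. Hence $|B\cap\interior\Delta_G|$ is odd and at most $2$, so it equals $1$. Thus $\Delta_B\cap\Delta_G$ has exactly two arc-endpoints, one point $p\in B\cap\interior\Delta_G$ and one point $q\in G\cap\interior\Delta_B$, and therefore consists of exactly one arc $\alpha$ (joining $p$ to $q$) together with some number of circles. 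Note that $p$ lies on $\partial\Delta_B$ and in $\interior\Delta_G$, while $q$ lies in $\interior\Delta_B$ and on $\partial\Delta_G$; consequently $\alpha$ is a non-proper arc in each of $\Delta_B$ and $\Delta_G$ (one endpoint on the boundary, one in the interior), which is exactly the configuration required by Lemma~\ref{lem:delta1}.

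It remains to remove the circle components by an isotopy of $\Delta_G$ rel boundary. I would run the usual innermost-circle argument: choose a circle $c\subset\Delta_B\cap\Delta_G$ that is innermost on $\Delta_G$, bounding a subdisk $D_G\subset\Delta_G$ whose interior is disjoint from $\Delta_B$; then $c$ also bounds a subdisk $D_B\subset\Delta_B$, and the embedded sphere $D_G\cup D_B$ bounds a ball in $S^3$. Pushing $D_G$ across this ball to a parallel copy of $D_B$ is an isotopy of $\Delta_G$, supported away from $\partial\Delta_G$ and away from $\alpha$ (as $\interior D_G$ is clean), that strictly decreases the number of circle components while leaving $\alpha$ and the boundaries $B$ and $G$ untouched. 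Iterating removes all circles, leaving $\Delta_B\cap\Delta_G=\alpha$, as desired.

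The main obstacle is this last step: one must ensure the innermost-circle surgery can always be realized by an ambient isotopy fixing $\partial\Delta_G=G$ and introducing neither new intersections with $B$ nor new circles, and in particular never disturbing the single arc $\alpha$. Choosing $c$ innermost on $\Delta_G$ guarantees $\interior D_G$ is free of intersection curves, so the supporting ball can be taken in a neighborhood of $D_B\cup D_G$ disjoint from $\alpha$; this keeps both counts $|B\cap\interior\Delta_G|=1$ and $|G\cap\interior\Delta_B|=1$ fixed throughout (the latter automatically, since $G$ and $\Delta_B$ are never moved), which is what forces the iteration to terminate with precisely the arc of Lemma~\ref{lem:delta1}.
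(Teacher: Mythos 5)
The first half of your argument is fine and is essentially the paper's: your parity count of arc endpoints pins down $|\Delta_G\cap B|=1$ and the existence of a single arc $\alpha$ that is non-proper in each disk (the paper gets the same count from $lk(B,G)=\pm 1$ together with the bound $|\Delta_G\cap B|\le 2$). The gap is in the circle-removal step, and the most serious omission is that you never track the curve $R$. Lemma~\ref{lem:delta1} requires the final disks to still be as in Definition~\ref{def:small}; in particular the modified $\Delta_G$ must still meet $R$ in exactly one point, and the proof of Lemma~\ref{lem:delta1} uses this. A subdisk $D_B\subset\Delta_B$ may or may not contain the point $\Delta_B\cap R$, and likewise $D_G$ may or may not contain $\Delta_G\cap R$; if exactly one of these happens, trading $D_G$ for a copy of $D_B$ changes $|\Delta_G\cap R|$ from $1$ to $0$ or $2$, and your invariants ``$|B\cap\Delta_G|=1$ and $|G\cap\Delta_B|=1$ are preserved'' do not rule this out. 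The paper excludes the mixed case by noting that the sphere $D_B\cup D_G$ would then meet $R$ transversally in a single point, impossible in $S^3$ for homology reasons, and then checks separately that in the case where both subdisks contain their $R$-points the swap still leaves $|\Delta_G\cap R|=1$. Some such bookkeeping is needed before Lemma~\ref{lem:delta1} can be invoked at the end.

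There is also a problem with which disk you take the circle innermost on. Choosing $c$ innermost on $\Delta_G$ makes $\operatorname{int}(D_G)$ clean, but the subdisk $D_B\subset\Delta_B$ bounded by $c$ may still meet $\Delta_G\setminus D_G$ in further circles of $\Delta_B\cap\Delta_G$, and the ball bounded by $D_B\cup D_G$ may contain other sheets of $\Delta_G$ as well as arcs of $R$ and $B$. So a parallel copy of $D_B$ need not be disjoint from the rest of $\Delta_G$ (the replacement can fail to be embedded), and the ``push across the ball'' is not an ambient isotopy supported in a small neighborhood of $D_B\cup D_G$ as you assert --- that neighborhood is not the region you must sweep through. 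The paper instead takes the circle innermost on $\Delta_B$, so that the clean subdisk is the one being copied into $\Delta_G$, and performs a disk swap rather than an ambient isotopy. With that correction and the $R$-bookkeeping above, your argument would go through.
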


\begin{figure}
{
   \fontsize{9pt}{11pt}\selectfont
   \def\svgwidth{2in}
\begingroup%
  \makeatletter%
  \providecommand\color[2][]{%
    \errmessage{(Inkscape) Color is used for the text in Inkscape, but the package 'color.sty' is not loaded}%
    \renewcommand\color[2][]{}%
  }%
  \providecommand\transparent[1]{%
    \errmessage{(Inkscape) Transparency is used (non-zero) for the text in Inkscape, but the package 'transparent.sty' is not loaded}%
    \renewcommand\transparent[1]{}%
  }%
  \providecommand\rotatebox[2]{#2}%
  \newcommand*\fsize{\dimexpr\f@size pt\relax}%
  \newcommand*\lineheight[1]{\fontsize{\fsize}{#1\fsize}\selectfont}%
  \ifx\svgwidth\undefined%
    \setlength{\unitlength}{315.97194672bp}%
    \ifx\svgscale\undefined%
      \relax%
    \else%
      \setlength{\unitlength}{\unitlength * \real{\svgscale}}%
    \fi%
  \else%
    \setlength{\unitlength}{\svgwidth}%
  \fi%
  \global\let\svgwidth\undefined%
  \global\let\svgscale\undefined%
  \makeatother%
  \begin{picture}(1,0.52916192)%
    \lineheight{1}%
    \setlength\tabcolsep{0pt}%
    \put(0,0){\includegraphics[width=\unitlength,page=1]{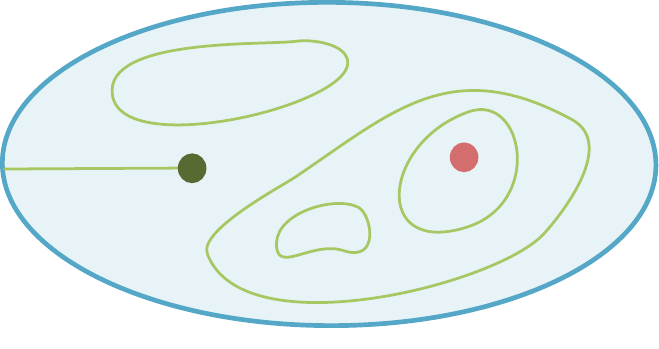}}%
    \put(0.08846345,0.01673593){\color[rgb]{0.30196078,0.63921569,0.76862745}\makebox(0,0)[lt]{\lineheight{1.25}\smash{\begin{tabular}[t]{l}$\Delta_B$\end{tabular}}}}%
    \put(0.53028126,0.4185097){\color[rgb]{0.63529412,0.77647059,0.36078431}\makebox(0,0)[lt]{\lineheight{1.25}\smash{\begin{tabular}[t]{l}$\Delta_B\cap\Delta_G$\end{tabular}}}}%
    \put(0.2110812,0.19205901){\color[rgb]{0.31372549,0.39607843,0.16078431}\makebox(0,0)[lt]{\lineheight{1.25}\smash{\begin{tabular}[t]{l}$G$\end{tabular}}}}%
    \put(0.63176358,0.21521058){\color[rgb]{0.82745098,0.40784314,0.40784314}\makebox(0,0)[lt]{\lineheight{1.25}\smash{\begin{tabular}[t]{l}$R$\end{tabular}}}}%
  \end{picture}%
\endgroup%

}
\caption{The disk $\Delta_B$ for a small RBG link with $\Delta_B\cap G$ a single point.}
\label{fig:deltaB}
\end{figure}

\begin{proof}
Since $\Delta_B$ intersects $G$ once geometrically, any disk bounded by $G$ must intersect $B$ once algebraically. Thus, the hypothesis that $L$ is small implies that $\Delta_G$ intersects $B$ exactly once geometrically. 

Consider the intersection $\Delta_B\cap\Delta_G\hookrightarrow \Delta_B$; by general position this is a 1-dimensional submanifold of $\Delta_B$(with a finite number of components, which are not necessarily properly embedded). Since $G\cap\Delta_B$ is a point, there must be exactly one arc of intersection that has exactly one endpoint in $int(\Delta_B)$. The same analysis of $\Delta_B\cap\Delta_G\hookrightarrow \Delta_G$ shows that there is exactly one arc in $\Delta_G$ that has exactly one endpoint in $int(\Delta_G)$. Endpoints of arcs in $int(\Delta_G)$ correspond to endpoints of arcs in $\partial \Delta_B$. Thus there is exactly one arc in $\Delta_B$ that has exactly one endpoint in $\partial \Delta_B$. We deduce that there is exactly one arc of intersection in $\Delta_B$, and it has one endpoint in $int(\Delta_B)$ and one endpoint in $\partial \Delta_B$. 

The other intersections are circles on $\Delta_B$. See Figure \ref{fig:deltaB}. Recall that $\Delta_B\cap R=\{pt\}$, so each circle either contains $\{pt\}$ or does not. Consider an innermost circle $\gamma\subset \Delta_B$ which does not contain $\{pt\}$. Then $\gamma$ bounds a disk $\Delta'_B \subset \Delta_B$ with $\Delta'_B \cap R = \emptyset$. Since $\gamma$ is also a circle in $\Delta_G$, it bounds a disk $\Delta'_G \subset \Delta_G$. Note that $\Delta'_G$ does not contain $\{pt'\}:=\Delta_G\cap R$ (since if it did the disks $\Delta_B'$ and $\Delta_G'$ would give an $S^2\looparrowright S^3$ with $R\cap S^2=\{pt\}$, which is impossible for homology reasons). Thus we can replace $\Delta_G'\subset \Delta_G$ with a parallel copy of $\Delta_B'$; this yields a new $\Delta_G$ that has fewer circles of intersection with $\Delta_B$. This replacement does not change the arc intersections of $\Delta_B$ with $\Delta_G$ nor the intersections of either disk with $R$. Continue until no such $\gamma$ remain.

Now consider an innermost circle $\gamma$ on $\Delta_B$ which does contain $\{pt\}$. By a similar analysis, $\gamma\subset\Delta_G$ bounds a disk $\Delta'_G \subset \Delta_G$ which does contain $\{pt'\}$. Now replacing $\Delta_G'\subset \Delta_G$ with a parallel copy of $\Delta'_B$ does change $\Delta_G\cap R$, but the new $\Delta_G\cap R$ is still a single point. Continue until no such $\gamma$ remain.  
\end{proof}

\begin{proof}[Proof of Proposition \ref{prop:notthatsmall}]
The case that $\Delta_B\cap G$ in a single point is a consequece of Lemmas \ref{lem:delta1boring} and \ref{lem:delta1}. In the case that $\Delta_B\cap G$ is empty, use $\Delta_B$ to isotope $B$ to a meridian of $R$ which does not link $G$. Since $R\cup G\cong R\cup \mu_G$, we can isotope $B\cup R\cup G$ to $\mu_R\cup R\cup \mu_R$. From here the slam dunk homeomorphisms give $K_B=B=U=G=K_G$. 
\end{proof}

\begin{remark}
When $L$ is small and $\Delta_B$ intersects $G$ in two points, it is possible that $\Delta_B\cap\Delta_G$ contains no circles of intersection and yet the knots $K_B$ and $K_G$ are distinct; the reader can check that the RBG link in Example \ref{ex:smallrbg} has this property. 
\end{remark} 

In view of Proposition~\ref{prop:notthatsmall} (and its analogue with $B$ and $G$ switched), we see that if we want to produce $K_B \neq K_G$ from a small RBG link, we should only consider the cases when $|\Delta_B \cap G|=|\Delta_G \cap B|=2$.

\section{Computer experiments}
\label{sec:computer}
\subsection{A six-parameter family of RBG links}
To generate many examples of pairs of knots with the same $0$-surgery, we studied the family of small RBG links shown in Figure~\ref{fig:RBGforbigdata}. The link depends on $6$ parameters ($a$, $b$, $c$, $d$, $e$,  $f$), corresponding to the numbers of full twists in each box. The first parameter $a$ represents full twists between $R$ and the 2-handle framing curve of $R$. We also have twists between $R$ and its 2-handle framing in the box with $b$ full twists, so overall the red curve has framing $$r=a+b.$$ Thus, in view of Lemma~\ref{lem:specialeven}, the parity of the RBG link is given by $a+b$ mod $2$.

The RBG link from Figure~\ref{fig:RBGforbigdata} produces the knots $K_G(a, b, c, d, e, f)$ and $K_B(a, b, c, d, e, f)$ with the same $0$-surgery, as shown in Figure~\ref{fig:KbKg}. We investigated these knots for values of the parameters ranging between $-2$ and $2$ for the full twists on two strands, and between $-1$ and $1$ for the full twists on four strands:
$$ a, c, e \in [-2,2], \ \ \ \  b, d, f \in [-1, 1].$$
These parameters ensure that the crossing numbers of the knots $K_B$ and $K_G$ are at most 55. 
\begin{figure}
{
   \fontsize{9pt}{11pt}\selectfont 
   \def\svgwidth{2.4in}
   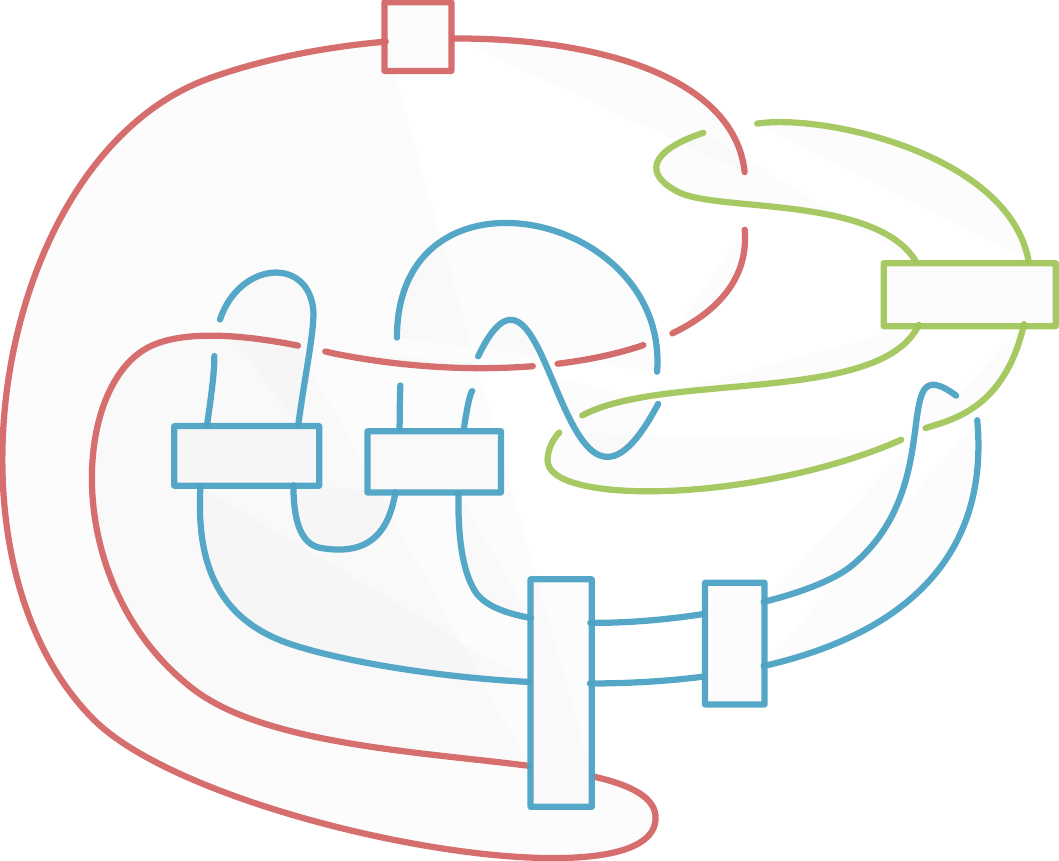
}
\caption{The 6-parameter RBG link which generated our 3375 examples of 0-surgery homeomorphisms.}
\label{fig:RBGforbigdata}
\end{figure}

\begin{figure}
{
   \fontsize{9pt}{11pt}\selectfont
   \def\svgwidth{4.8in}
   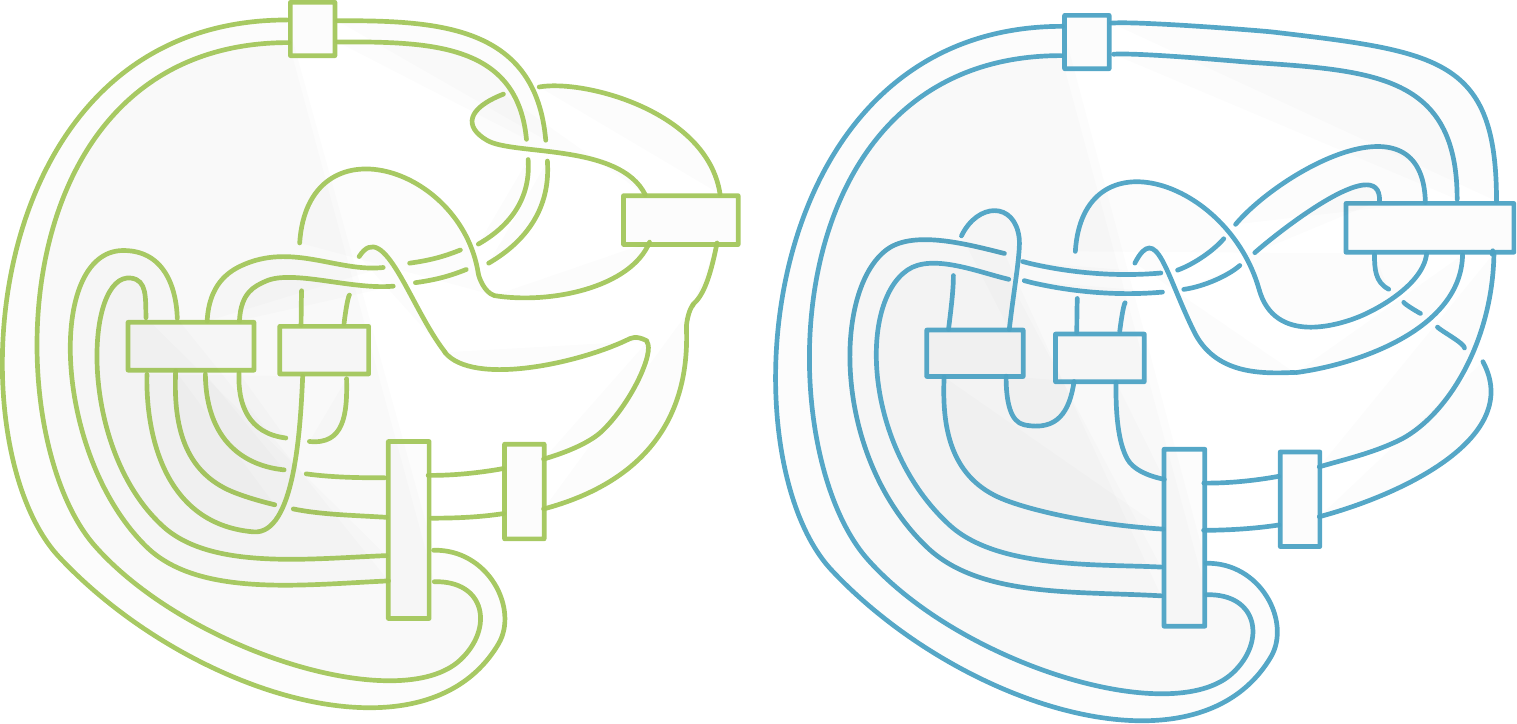
}
\caption{Knots $K_G(a, b, c, d, e, f)$ and $K_B(a, b, c, d, e, f)$ share a 0-surgery.}
\label{fig:KbKg}
\end{figure}

We obtained a family of $3375$ pairs of knots. We used SnapPy \cite{SnapPy} to generate a list of these knots, compute their hyperbolic volumes, and identify some of them with knots from knot tables. Our family is sufficiently general that it includes many small knots; for example, out of the 31 hyperbolic knots with at most 8 crossings, SnapPy recognized 19 among our data. In fact, one can show that all the knots in our family have Seifert genus at most 2 and four-ball genus at most 1; our knots include 19 of the 21 hyperbolic knots with at most 8 crossings that have these properties. 

The results of our investigations are described below, and supporting files can be found online at 
\href{http://web.stanford.edu/~cm5/RBG.html}{\tt http://web.stanford.edu/$\sim$cm5/RBG.html}.

\subsection{Methodology}
We searched our list for promising pairs of knots, in particular pairs such that one knot has $s < 0$ (and therefore is not H-slice in any $\#^n \CP$), whereas the other has $s=\sigma=0$, so has a chance of being H-slice in some $\#^n \CP$ (or perhaps even slice in $B^4$). (For the reason we considered $s<0$ instead of $s>0$, see Remark \ref{rem:sign}.) From the start, we could exclude some such pairs from the promising sublist using the following lemma.

\begin{lemma}
\label{lemma:ab}
(a) If $b=-1$, then $K_G(a, b, c, d, e, f)=K_B(a, b, c, d, e, f)$.

(b) If $a+b=0$, then the  knots $K_G(a, b, c, d, e, f)$ and $K_B(a, b, c, d, e, f)$ have diffeomorphic traces.
\end{lemma}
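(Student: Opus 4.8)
The plan is to treat the two parts by completely different routes: part (b) will be a clean application of the property-$U$ machinery already developed, while part (a) will be reduced to Proposition~\ref{prop:notthatsmall}.

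For part (b), the hypothesis $a+b=0$ is exactly the statement that the red framing $r=a+b$ vanishes, so $R$ is $0$-framed. Inspecting Figure~\ref{fig:RBGforbigdata}, the boxes carrying the $a$ and $b$ full twists only record self-twisting of $R$ against its framing pushoff, so as a \emph{knot} $R$ is the unknot $U$; I would make this explicit by tracking the red strand through the diagram after imposing $a+b=0$ and isotoping it to a round circle. With $R=U$ and $r=0$ in hand, Lemma~\ref{lem:specialpropU} shows that the $0$-surgery homeomorphism $\phi_L\colon S^3_0(K_B)\to S^3_0(K_G)$ associated to this special RBG link has property $U$. Theorem~\ref{thm:propU} then upgrades $\phi_L$ to a diffeomorphism $X(K_B)\to X(K_G)$ of the traces, which is precisely the claim. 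The only point requiring care is confirming that $R$ is genuinely unknotted throughout the family; this is a visual check that should follow directly by following the red component through the twist boxes.

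For part (a), the fact that we are asserting the strong conclusion $K_G=K_B$ (an equality of knots, not merely joint sliceness) signals that we should make the link small in the sense of Proposition~\ref{prop:notthatsmall}: if, after setting $b=-1$, one of the spanning disks $\Delta_B$ (or $\Delta_G$) can be isotoped to meet the opposite core $G$ (resp.\ $B$) in fewer than two points, then that proposition yields $K_B=K_G$ at once. The plan is therefore to locate the box carrying the $b$ full twists in Figure~\ref{fig:RBGforbigdata}, substitute $b=-1$, and use the resulting single negative full twist to perform a finger-move isotopy of $L$ that cancels one of the two points of $\Delta_B\cap G$. After this cancellation $|\Delta_B\cap G|\le 1$, and Proposition~\ref{prop:notthatsmall} applies.

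The main obstacle lies entirely in part (a): one must verify that a single negative full twist in the $b$-box truly lowers the geometric intersection number $|\Delta_B\cap G|$ below $2$, rather than merely altering the diagram. Concretely I expect the two points of $\Delta_B\cap G$ to arise from $G$ threading twice through the twist region, with the $b=-1$ twist supplying exactly the crossing needed to slide one strand of $G$ off $\Delta_B$; confirming this requires careful bookkeeping of the diagram near that box, and is the step where an explicit picture (in the spirit of Figures~\ref{fig:smallRBG} and~\ref{fig:smallRBG1int}) would be needed. By contrast, part (b) is essentially immediate once $R=U$ and $r=0$ have been recorded.
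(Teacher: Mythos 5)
Your part (b) is correct and is exactly the paper's argument: $r=a+b=0$, $R$ is an unknot in this family, so Lemma~\ref{lem:specialpropU} gives property $U$ and Theorem~\ref{thm:propU} upgrades $\phi_L$ to a trace diffeomorphism. (You are in fact slightly more careful than the paper in flagging that $R=U$ must be checked, since Lemma~\ref{lem:specialpropU} needs both hypotheses.)

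Part (a), however, has a genuine gap: the route through Proposition~\ref{prop:notthatsmall} cannot work. For every member of this family the linking number $l=\operatorname{lk}(B,G)$ is $0$ (the homology condition forces $l=0$ or $rl=2$, and $r=a+b$ ranges over values for which $rl=2$ is impossible, while $l$ does not depend on the twist parameters), so the geometric intersection number $|\Delta_B\cap G|$ is always even. Hence ``fewer than two points'' means \emph{zero} points, and in that case the proof of Proposition~\ref{prop:notthatsmall} shows much more than $K_B=K_G$: it shows $K_B=K_G=U$. Since the $b=-1$ members of the family still carry the twist parameters $a-1$, $c-1$, $d+f$, $e$ and generically produce nontrivial knots, no isotopy can reduce $|\Delta_B\cap G|$ below $2$; if your finger move succeeded, it would prove a false statement. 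The mechanism behind (a) is not degeneracy of the RBG link but symmetry: the paper isotopes the $b=-1$ link to the form in Figure~\ref{fig:bminus1}, which visibly admits a symmetry exchanging the $B$ and $G$ components (both $0$-framed, with the twist boxes arranged symmetrically between them), and this symmetry immediately gives $K_B\cong K_G$ without trivializing either knot. You would need to find and exhibit that symmetric diagram rather than try to kill intersections of $G$ with $\Delta_B$.
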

Item (b) is relevant because if the knots have diffeomorphic traces and one has $s<0$ then the other is not slice in any $\#^n \CP$; cf. Lemma~\ref{lem:TEL}. 

\begin{proof}[Proof of Lemma~\ref{lemma:ab}]
(a) One can check that for $b=-1$, the RBG link from Figure~\ref{fig:RBGforbigdata} is isotopic to the one shown in Figure~\ref{fig:bminus1}, which has a symmetry exchanging the $B$ and $G$ components. Therefore, the resulting $K_B$ and $K_G$ knots are isotopic. 
\begin{figure}
{
   \fontsize{9pt}{11pt}\selectfont 
   \def\svgwidth{2in}
\begingroup%
  \makeatletter%
  \providecommand\color[2][]{%
    \errmessage{(Inkscape) Color is used for the text in Inkscape, but the package 'color.sty' is not loaded}%
    \renewcommand\color[2][]{}%
  }%
  \providecommand\transparent[1]{%
    \errmessage{(Inkscape) Transparency is used (non-zero) for the text in Inkscape, but the package 'transparent.sty' is not loaded}%
    \renewcommand\transparent[1]{}%
  }%
  \providecommand\rotatebox[2]{#2}%
  \newcommand*\fsize{\dimexpr\f@size pt\relax}%
  \newcommand*\lineheight[1]{\fontsize{\fsize}{#1\fsize}\selectfont}%
  \ifx\svgwidth\undefined%
    \setlength{\unitlength}{214.98561552bp}%
    \ifx\svgscale\undefined%
      \relax%
    \else%
      \setlength{\unitlength}{\unitlength * \real{\svgscale}}%
    \fi%
  \else%
    \setlength{\unitlength}{\svgwidth}%
  \fi%
  \global\let\svgwidth\undefined%
  \global\let\svgscale\undefined%
  \makeatother%
  \begin{picture}(1,0.92751875)%
    \lineheight{1}%
    \setlength\tabcolsep{0pt}%
    \put(0,0){\includegraphics[width=\unitlength,page=1]{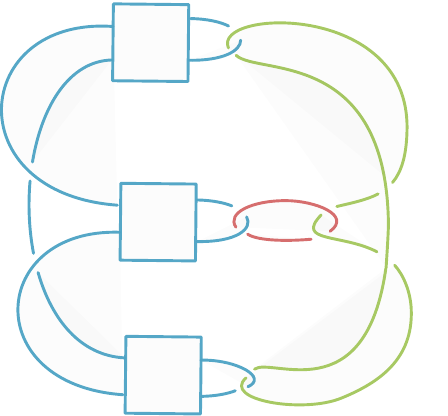}}%
    \put(0.31425584,0.82013511){\color[rgb]{0.30196078,0.63921569,0.76862745}\makebox(0,0)[lt]{\lineheight{1.25}\smash{\begin{tabular}[t]{l}$e$\end{tabular}}}}%
    \put(0.28743545,0.41482429){\color[rgb]{0.30196078,0.63921569,0.76862745}\makebox(0,0)[lt]{\lineheight{1.25}\smash{\begin{tabular}[t]{l}$d+f$\end{tabular}}}}%
    \put(0.29914493,0.07415134){\color[rgb]{0.30196078,0.63921569,0.76862745}\makebox(0,0)[lt]{\lineheight{1.25}\smash{\begin{tabular}[t]{l}$c-1$\end{tabular}}}}%
    \put(0.11526265,0.59887882){\color[rgb]{0.30196078,0.63921569,0.76862745}\makebox(0,0)[lt]{\lineheight{1.25}\smash{\begin{tabular}[t]{l}$0$\end{tabular}}}}%
    \put(0.74326044,0.61064703){\color[rgb]{0.63921569,0.77647059,0.35686275}\makebox(0,0)[lt]{\lineheight{1.25}\smash{\begin{tabular}[t]{l}$0$\end{tabular}}}}%
    \put(0.56290545,0.32526509){\color[rgb]{0.83137255,0.40392157,0.40392157}\makebox(0,0)[lt]{\lineheight{1.25}\smash{\begin{tabular}[t]{l}$a-1$\end{tabular}}}}%
  \end{picture}%
\endgroup%

}
\caption{The RBG link from Figure~\ref{fig:RBGforbigdata}, when $b=-1$. }
\label{fig:bminus1}
\end{figure}

(b) By Lemma \ref{lem:specialpropU}, the fact that $r=0$ implies that the 0-surgery homeomorphism has property $U$, and therefore (by Theorem~\ref{thm:propU}) it extends to a trace diffeomorphism. \end{proof}

We then computed the signatures and the $s$ invariants of the remaining $1800$ pairs of knots. To do this, we wrote a general formula for the DT (Dowker-Thistlethwaite) codes of the knots $K_G(a, b, c, d, e, f)$ and $K_B(a, b, c, d, e, f)$, depending on the six parameters. We plugged it into {\em Mathematica} \cite{mathematica} and used the functions {\tt KnotSignature} and {\tt sInvariant} from the \emph{KnotTheory\!\`{}} package \cite{KnotTheory}. (Note that the signature is a $0$-surgery invariant, so the signature of $K_B$ always equals that of $K_G$.) For the $s$ invariant, for some values of the parameters, the program took too long to compute (more than a few minutes). In such cases we had the option of simplifying the knot diagram in {\em SnapPy} with {\em Sage} \cite{SnapPy, Sage}, using the command $\mathtt{K.simplify('global')}$. We then plugged it back into either \emph{KnotTheory\!\`{}} or \emph{SKnotJob} \cite{SKnotJob}, tried to decrease girth as in \cite[Section 5.1]{FGMW}, and then re-compute. 

We also used the following trick to reduce the number of knots for which we had to explicitly compute $s$:
If we found two knots of the same type ($K_B$ or $K_G$) corresponding to values $(a, b, c, d, e, f)$ and $(a'', b'', c'', d'', e'', f'')$ with
$$ a \leq a'', \ \ b \leq b'', \ \ c\leq c'', \ \ d \leq d'' , \ \ e \leq e'', \ \  f \leq f''$$
and with the same value of $s$, then we knew this value of $s$ also holds for all the knots of the same type with intermediate values $(a', b', c', d', e', f')$, i.e. such that
$$ a \leq a' \leq a'', \ \ b \leq b' \leq b'', \ \ c\leq c' \leq c'', \ \ d \leq d'\leq d'' , \ \ e \leq e' \leq e'', \ \  f \leq f' \leq f''.$$
This is a consequence of the monotonicity of the $s$-invariant under generalized crossing changes, which was proved in \cite[Theorem 1.1]{MMSW}.

\subsection{An exclusion}
As a result of our calculations, we found $24$ pairs of knots with $\sigma=0$, such that one knot in the pair has $s=0$ and the other $s=-2$. If the knot with $s=0$ were H-slice in $\#^n \CP$, this would produce an exotic $\#^n \CP$. For one of the $24$ knots with $s=0$, namely
$$K= K_G(0,1,0,-1,2,1) $$
we could prove that $K$ is not H-slice in $\#^n \CP$, using the following method.

If the knot $K_G(0,1,0,-1,2,1)$ were H-slice in some $\#^n \CP$, then $K_G(-1,1,0,-1,2,1)$, which differs from it by a crossing change from negative to positive, would be H-slice in $\#^{n+1} \CP$ by Lemma~\ref{lem:HCP}. However, $K_G(-1,1,0,-1,2,1)$ has the same trace as $K_B(-1,1,0,-1,2,1)$ by Lemma~\ref{lemma:ab}(b). Therefore, $K_B(-1,1,0,-1,2,1)$ would also be H-slice in $\#^{n+1} \CP$, according to Lemma~\ref{lem:TEL}. Direct computation using \emph{KnotTheory\!\`{}} gives that 
$$s(K_B(-1,1,0,-1,2,1)) = -2,$$
which contradicts \eqref{eq:sless}.

\subsection{Interesting examples} 
We were left with $23$ promising pairs of knots. For the knot in each pair with $s=0$ (i.e. each candidate for being H-slice in $\#^n \CP$), we simplified the diagrams in {\em SnapPy} with {\em Sage}, using $\mathtt{K.simplify('global')}$. The resulting diagrams are shown in Figures~\ref{fig:21a} and \ref{fig:2}. In Table~\ref{tab:table1} we list the apparent number of crossings (in the simplified diagram), volume and Alexander polynomials of these knots. 

\renewcommand{\arraystretch}{1.2}
\begin{table}
  \begin{center}
    \begin{tabular}{|c| l |  c | c | l |} 
    \hline
     Name & Identifier & \# crossings & Volume & Alexander polynomial \\
     \hline
    $K_1$ & $ K_B(0, 1, 0, 1, 2, -1)$ & $29$ & 15.451403388 & $1$ \\
    $K_2$ & $K_B(1,1,0,1,1,-1)$ & 29 & 14.698440095 & $1$ \\
    $K_3$ &  $K_G(1, 1, 0,-1, 1, 1)$ & 32 & 20.930658865 & $1$\\
    $K_{4}$ &  $K_B(2, 1, -1, 1, 1, -1)$  & 29 & 15.552102256 & $1$ \\
   $K_{5}$ & $K_G(2, 1, -1, -1, 1, 1)$ & 32 & 21.888892554 & $1$ \\     
   $ K_6$ &  $K_B(1, 1, -1, 1, 2, -1)$ & 29 & 16.583603453 & $t^2 -2t+ 3 -2t^{-1} + t^{-2}$  \\
   $K_7$ & $K_B(1, 1, 1, 1, 0, -1)$ & 31 & 18.694759676 & $t^2 -6t + 11 -6t^{-1} + t^{-2}$ \\
   $K_8$ & $ K_B(1, 1, 2, 1, -1, -1)$ & 36 & 21.768651216 & $ 4t^2 - 20t +33 - 20t^{-1} + 4 t^{-2}$\\
   $K_9$ & $K_G(1, 1, -1, -1, 2, 1)$ & 32 & 21.917877366 & $t^2 - 2t+ 3-2t^{-1} + t^{-2}$ \\
   $K_{10}$ & $K_G(1,1,1,-1,0,1)$ & 32 &  23.276452522 & $t^2 -6t + 11 -6t^{-1} + t^{-2}$\\
   $K_{11}$ & $K_G(1, 1, 2, -1, -1, 1)$ & 41 & 25.720923264 & $4t^2 -20t + 33 -20t^{-1} + 4t^{-2}$\\
   $K_{12}$ & $K_G(1, 1, 2, 0, -1, 1)$ & 20 & 20.032239211 & $2t^2-12t+21-12t^{-1}+2t^{-2}$\\
   $K_{13}$ & $K_B(2, 1,-2, 1, 2, -1)$ & 35 &  18.623983982 & $2t^2-6t+9-6t^{-1}+2t^{-2}$\\
   $K_{14}$ & $K_B(2, 1, 0, 1, 0, -1)$ & 31 & 16.662235002 & $-2t+5 - 2t^{-1}$ \\
   $K_{15}$ & $K_B(2, 1, 1, 1, -1, -1)$ & 33 & 20.505101934 & $2t^2-12t+21-12t^{-1}+2t^{-2}$ \\
   $K_{16}$ &  $K_B(2, 1, 2, 1, -2, -1)$ & 37 & 22.919098178 & $6t^2 - 30t + 49 -30t^{-1} + 6t^{-2}$ \\
   $K_{17}$ & $K_G(2,1,-2,-1,2,1)$ &37 &23.396805316 &  $2t^2-6t+9-6t^{-1}+2t^{-2}$ \\
   $K_{18}$ & $K_G(2,1,-2,0,2,1)$ & 16 & 17.009749601 & $t^2 -2t+ 3 -2t^{-1} + t^{-2}$ \\
   $K_{19}$ & $K_G(2, 1, 0, -1, 0, 1)$ & 34 & 22.384645541 & $-2t+5 - 2t^{-1}$\\ 
   $K_{20}$ & $K_G(2, 1, 1, -1, -1, 1)$ & 36 & 24.655381040 & $2t^2-12t+21-12t^{-1}+2t^{-2}$ \\
   $K_{21}$ & $K_G(2, 1, 2, -1, -2, 1)$ & 42 & 26.731842490 & $2t^2-12t+21-12t^{-1}+2t^{-2}$ \\
   $K_{22}$ & $K_G(2, 1, 1, 0, -1, 1)$ & 18 & 19.113083865 & $t^2 -8t+15 - 8t^{-1} + t^{-2}$\\
   $K_{23}$ & $K_G(2, 1, 2, 0, -2, 1)$ & 22 & 21.642574192 & $5t^2 - 26t + 43 - 26t^{-1} + 5t^{-2}$\\   \hline
    \end{tabular}
  \end{center}
  \caption{Examples coming out of our computer experiments.} 
     \label{tab:table1}
\end{table}

The knots $K_1$ through $K_5$ have trivial Alexander polynomial, and are therefore topologically slice by Freedman's theorem \cite{Freedman}; they are candidates for being slice. The knots $K_6$ through $K_{21}$ satisfy the Fox-Milnor condition on the Alexander polynomial, but were shown to not be topologically slice by Dunfield and Gong \cite{DG}, using the program \cite{DG2} for computing twisted Alexander polynomials. The last two knots $K_{22}$ and $K_{23}$ do not satisfy the Fox-Milnor condition. Still, all of these knots are candidates for being H-slice in $\#^n \CP$.

\begin{proof}[Proofs of Theorems~\ref{thm:21} and \ref{thm:23}] If $K_i$ ($i=1,\dots, 23$) is a knot from our list, then there is a companion knot $K'_i$ such that $S^3_0(K) \cong S^3_0(K'_i)$ and $s(K'_i)=-2$. Therefore, $K'_i$ is not H-slice in $\#^n \CP$ by the inequality \eqref{eq:sless}. Suppose $K_i$ were H-slice in $W=\#^n \CP$ for some $n$. (The case $n=0$ corresponds to $S^4$.)  Then Lemma~\ref{lem:slicehomeo} would show that $K'$ is H-slice in a $4$-manifold $X$ that is homotopy equivalent to $W$, and therefore homeomorphic to it by Freedman's theorem \cite{Freedman}. On the other hand, $X$ could not be diffeomorphic to $W$, because $K'_i$ is not H-slice in $W$.
\end{proof}

Note that if any of the $23$ knots were H-slice in $\#^n \CP$, they would actually be BPH-slice, in view of the following lemma.
\begin{lemma}
The $23$ knots in Figures~\ref{fig:21a} and \ref{fig:2} are all H-slice in $\#^3 \bCP$. 
\end{lemma}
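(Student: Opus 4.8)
The plan is to mirror the mechanism of Lemma~\ref{lem:HCP}. Reflecting that lemma through a plane—an operation that sends $\CP$ to $\bCP$ and interchanges positive and negative crossings—yields the following $\bCP$-analogue: if $K'$ is obtained from $K$ by changing a \emph{positive} crossing to a \emph{negative} one and $K$ is H-slice in $\#^{n-1}\bCP$, then $K'$ is H-slice in $\#^{n}\bCP$. Reading this backwards and starting from the unknot $U$, which is slice and hence H-slice in $\#^{0}\bCP=S^4$, we conclude that any knot which can be transformed into $U$ by a sequence of $k$ crossing changes, each turning a \emph{negative} crossing into a \emph{positive} one, is H-slice in $\#^{k}\bCP$. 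Thus it suffices to exhibit, for each of the $23$ knots, an unknotting sequence consisting of at most three negative-to-positive crossing changes.

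To produce these sequences uniformly, I would work directly with the six-parameter diagrams of $K_B(a,b,c,d,e,f)$ and $K_G(a,b,c,d,e,f)$ from Figure~\ref{fig:KbKg}, rather than with the simplified diagrams of Figures~\ref{fig:21a} and \ref{fig:2}. A crossing change inside one of the twist boxes alters the corresponding parameter, and one reads off from the local picture which sign of change is needed to reduce the knotting; the goal is to drive the parameters to a configuration for which the associated knot is visibly trivial, for instance the degenerate case in the proof of Proposition~\ref{prop:notthatsmall} where $K_B=K_G=U$. Tracking signs box by box, and handling ranges of parameter values together in the same spirit as in the computer experiments, one should reduce the $23$ cases to a short list of local verifications, in each of which an unknotting sequence of at most three negative-to-positive changes is found.

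The main obstacle is sign control together with the uniform bound of three. The existence of \emph{some} unknotting sequence is automatic, but a generic sequence mixes the two types of crossing change, and each positive-to-negative change would force a $\CP$ summand rather than a $\bCP$ summand. That the requisite changes can all be chosen negative-to-positive is consistent with, and guided by, the vanishing of the signature and of $s$ for these knots—the necessary conditions $\sigma(K)\ge 0$ and $s(K)\le 0$ for H-sliceness in $\#^{n}\bCP$ hold with equality—but it must still be verified diagrammatically. Establishing that three such changes always suffice, matching the number of twist regions that must be undone, is the crux, and is where I expect the explicit case analysis on the family of diagrams to be unavoidable.
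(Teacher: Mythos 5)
Your overall mechanism is sound: the mirror of Lemma~\ref{lem:HCP} does show that if a knot is obtained from an H-slice-in-$\#^{n-1}\bCP$ knot by a positive-to-negative modification then it is H-slice in $\#^{n}\bCP$, and reading an unknotting sequence backwards from $U$ is a legitimate way to exploit this. The gap is in the plan you attach to it. You aim to unknot each of the $23$ knots by at most three negative-to-positive single crossing changes. That is a much stronger target than what is needed, and it is not what the paper does: the base case of the actual argument is not the unknot but a \emph{ribbon} knot. The structural fact your proposal is missing is Lemma~\ref{lem:splitslice}: for the family of Figure~\ref{fig:RBGforbigdata}, whenever $b+c+e=0$ the components $B$ and $G$ are split, so $K_B$ and $K_G$ are ribbon and hence H-slice in everything. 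Since every knot in Table~\ref{tab:table1} has $b+c+e\in\{1,2,3\}$, one only has to lower the sum $b+c+e$ to zero, and each unit change of $b$, $c$ or $e$ in the appropriate direction is an annular cobordism in $\bCP\setminus(\intB\sqcup\intB)$. This makes the argument uniform and essentially two lines long. By contrast, insisting on reaching the unknot forces you to control the unknotting number \emph{and} the signs of all crossings used, and there is no reason to expect that three negative-to-positive changes ever suffice for these knots; the degenerate case of Proposition~\ref{prop:notthatsmall} that you invoke requires $\Delta_B\cap G=\emptyset$, which is not the configuration you reach by adjusting twist parameters.

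Two further points. Even within your framework the moves are not single crossing changes: $b$ is a full-twist parameter on four strands, so decrementing it is a generalized crossing change in the sense of Cochran--Tweedy, covered by the remark following Lemma~\ref{lem:HCP} rather than by the lemma itself; and a unit change of a two-strand twist parameter is a full twist, i.e.\ a clasp, not literally one crossing in a fixed diagram. More importantly, you explicitly defer the entire verification (``it must still be verified diagrammatically,'' ``the explicit case analysis \dots is unavoidable''), so what you have is a plan rather than a proof, and the part you identify as the crux is exactly the part that is not done; the vanishing of $\sigma$ and $s$ is only a consistency check and gives no evidence that the required sequences exist. To repair the argument, replace the unknot target by the split configuration $b+c+e=0$, invoke Lemma~\ref{lem:splitslice} there, and then climb back up using the generalized-crossing version of the mirrored Lemma~\ref{lem:HCP}.
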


\begin{proof}
Observe that for all our knots, we have $b+c+e\in \{1,2,3\}$. When $b+c+e=0$, the RBG link in Figure~\ref{fig:RBGforbigdata} has the property that its $B$ and $G$ components are split. By Lemma~\ref{lem:splitslice}, the corresponding knots $K_B$ and $K_G$  (with $b+c+e=0$) are slice. Increasing $b$, $c$ or $e$ by one corresponds to an annular cobordism in $\bCP \setminus (\intB \sqcup \intB)$ between the respective knots. Therefore, if $n:=b+c+e > 0$, the knots $K_B(a, b, c, d, e, f)$ and $K_G(a, b, c, d, e, f)$ are H-slice in $\#^n \bCP$. \end{proof} 

\begin{remark}\label{rem:sign}
We also searched in our data for pairs of knots such that one has $s > 0$ and the other $s=\sigma=0$ (which would be relevant for H-sliceness in $\#^n \bCP$ instead of $\#^n \CP$), but we found no pairs of this type. Of course, we can obtain such pairs by taking the mirrors of the $23$ knots in our table. 
\end{remark}

We analyzed the $23$ knots from Table~\ref{tab:table1} further, to make sure they pass some well-known obstructions to being slice (or BPH-slice, as the case may be).

First, we looked at algebraic obstructions coming from the Seifert matrix.
\begin{proposition}
\label{prop:algslice}
The knots $K_6$ through $K_{21}$ from Figure \ref{fig:21a} are algebraically slice.
\end{proposition}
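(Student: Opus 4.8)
The plan is to verify algebraic sliceness directly: a knot $K$ is algebraically slice exactly when some Seifert surface $\Sigma$ of genus $g$ carries a \emph{metabolizer}, i.e.\ a rank-$g$ direct summand $H \subset H_1(\Sigma;\Z) \cong \Z^{2g}$ on which the Seifert pairing $\theta(x,y) = \operatorname{lk}(x^+,y)$ vanishes; in matrix terms, a unimodular change of basis brings the Seifert matrix $V$ to block form with vanishing upper-left $g\times g$ block. So it suffices to exhibit, for each of $K_6,\dots,K_{21}$, a Seifert surface together with such a subspace.

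First I would build the Seifert surfaces straight from the diagrams. These knots are presented (as in Figures~\ref{fig:RBGforbigdata} and \ref{fig:KbKg}) by performing at most two slam-dunk slides on the small RBG link, and Seifert's algorithm applied to the resulting diagrams produces a surface of genus at most $2$, matching the bound on the Seifert genus for the whole family. I would fix a symplectic basis of $H_1(\Sigma)$ whose geometric representatives thread the twist boxes, so that each entry of $V$ becomes an explicit linking number, recorded once and for all as an affine function of $(a,b,c,d,e,f)$; specializing to the sixteen relevant parameter tuples then gives the sixteen Seifert matrices. To cut down the work I would group the knots by Alexander polynomial (for instance $K_6,K_9,K_{18}$ all have $\Delta = t^2-2t+3-2t^{-1}+t^{-2} = (t-1+t^{-1})^2 = \Delta_{3_1}(t)^2$, and $K_{14},K_{19}$ share $\Delta = 5 - 2t - 2t^{-1}$), since within each group the matrices agree up to a few entries.

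For the two genus-one knots $K_{14}$ and $K_{19}$ the verification is purely numerical: a metabolizer is a primitive $v$ with $v^\top V v = 0$, and the binary form $v \mapsto v^\top V v$ is isotropic over $\Q$ precisely when its discriminant $-\det(V+V^\top) = \pm|\Delta_K(-1)|$ is a perfect square. Since the Fox--Milnor factorization $\Delta_K \doteq f(t)f(t^{-1})$ forces $|\Delta_K(-1)| = f(-1)^2$ (here equal to $9$), and integrality of $\det V$ fixes the sign, the form is isotropic and a metabolizing vector exists. For the genus-two knots Fox--Milnor is no longer sufficient, so here I would write down a candidate rank-two summand spanned by two primitive classes and check directly that the Seifert form vanishes on the pair; the squares $\Delta = \Delta_{3_1}^2$ (and the analogous factorizations for the other groups) are exactly the numerical signature one expects from a metabolic Seifert form, as for the slice knot $3_1 \# \overline{3_1}$.

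The main obstacle is the first step: honestly computing the parametrized Seifert matrices, since the slam-dunk slides needed to display $K_B$ and $K_G$ distort the diagrams and one must track all the signs and linking numbers carefully. Once these matrices are in hand the remaining work is routine linear algebra over $\Z$, and the Fox--Milnor condition enjoyed by $K_6,\dots,K_{21}$ is the arithmetic shadow guaranteeing that the metabolizing classes can be found.
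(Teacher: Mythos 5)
Your overall strategy---exhibit an explicit half-rank metabolizer for a Seifert matrix computed as a function of the parameters $(a,b,c,d,e,f)$---is the same as the paper's, but as written the proposal has real gaps. First, the step you defer as the ``main obstacle'' is in fact the entire content of the proof: the paper draws one explicit genus-$2$ Seifert surface for the whole family $K_B(a,b,c,d,e,f)$ (Figure~\ref{fig:SeifertSurface}), records the resulting parametrized Seifert matrix \eqref{eq:A}, and then writes down the isotropic planes (a single uniform choice $\Span\{(0,1,1,-1),(e-2,1,0,0)\}$ covers all cases except $K_{12}$ and $K_{18}$). Without producing the matrix and the metabolizers, nothing has been verified. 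Your suggestion that Seifert's algorithm applied to the $\sim 30$--$40$ crossing post--slam-dunk diagrams yields genus at most $2$ is also unjustified; Seifert's algorithm generally overshoots the minimal genus badly, which is why the paper constructs the surface by hand. You also miss the simplification that does the other half of the work: since the Seifert form up to S-equivalence (hence the algebraic concordance class) is determined by the $0$-surgery, the $K_G$ knots are algebraically concordant to their $K_B$ companions, so only the single $K_B$ family needs a Seifert surface at all.

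Second, your ``purely numerical'' treatment of $K_{14}$ and $K_{19}$ is based on a false premise. These are not genus-one knots: the paper reports that knot Floer homology (which detects Seifert genus) gives genus exactly $2$ for all $23$ knots; the degree of $\Delta_K = -2t+5-2t^{-1}$ only bounds the genus from below. Consequently there is no $2\times 2$ Seifert matrix to which your binary-form discriminant criterion applies, and one must instead find a rank-$2$ isotropic summand of a $4\times 4$ form, exactly as for the other knots. (Your closing remark that Fox--Milnor is ``the arithmetic shadow guaranteeing that the metabolizing classes can be found'' should also be resisted: as you note yourself, for genus $\geq 2$ Fox--Milnor is necessary but not sufficient for algebraic sliceness, so it guarantees nothing and the explicit metabolizers must actually be exhibited.)
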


\noindent {\em Proof.} Recall that the algebraic concordance class of a knot is given by its Seifert matrix up to S-equivalence \cite{Levinealg, Trotter}. Since the Seifert matrix can be read from the 0-surgery, any two knots with the same 0-surgery are algebraically concordant. Thus, for the knots in our table of the form $K_G$, it suffices to check algebraic sliceness for their companions $K_B$.

A genus $2$ Seifert surface for the knot $K_B(a,b,c,d,e,f)$ is shown in Figure~\ref{fig:SeifertSurface}. The associated Seifert matrix with respect to the basis $(\alpha, \beta, \gamma, \delta)$ is
\begin{equation}
\label{eq:A}
A = \begin{pmatrix} 0 & 0 & 0 & -1 \\ 1 & a+c+d+f & c+1 & f+c+d+1 \\
0 & c & b+c+e & b+c+1 \\ 0 & f+c+d & b+c+1 & b+c+d+f
\end{pmatrix}.
\end{equation}
To show that a knot $K_B(a,b,c,d,e,f)$ is algebraically slice, we will explicitly find a two-dimensional subspace $V$ on which the Seifert form $A$ restricts to the zero matrix. In other words, if we form a $4 \times 2$ matrix $S$ whose columns are the basis vectors of $V$, we should have
$$ S^T \! A \, S = 0.$$
Looking at the knots $K_1$ through $K_{21}$ in Table~\ref{tab:table1}, we see that with the exception of $K_{12}$ and $K_{18}$, all the other values $(a, b, c, d, e, f)$ satisfy $b=1, d+f=0, a+c+e=2$. In these cases, we can take
$$ V=\Span\{ (0,1,1,-1), (e-2,1,0,0)\}.$$
\begin{figure}
{
   \fontsize{9pt}{11pt}\selectfont
   \def\svgwidth{3.5in}
   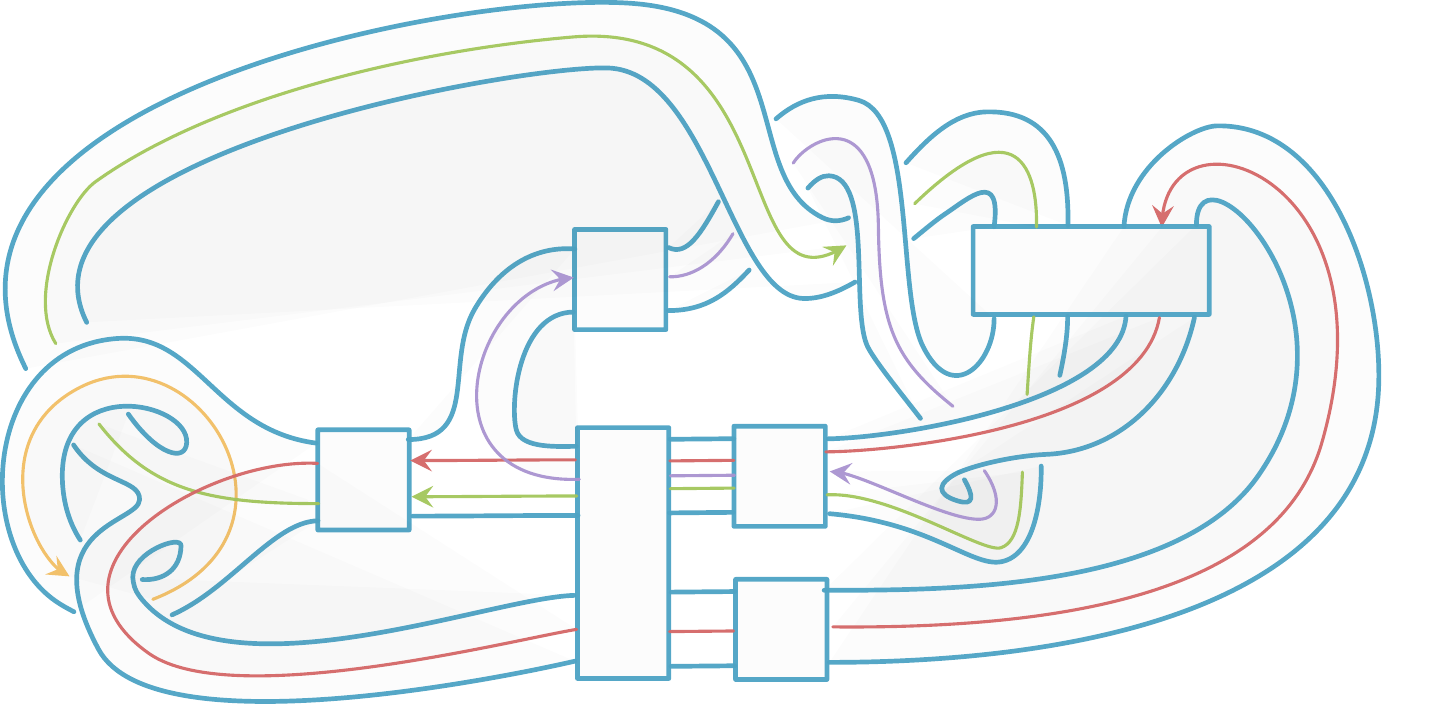
}
\caption{A Seifert surface for the knot $K_B(a,b,c,d,e,f)$.}
\label{fig:SeifertSurface}
\end{figure}

For $K_{12}$ we have $(a, b, c, d, e, f)=(1,1,2,0,-1,1)$ and we choose
$$ V=\Span\{ (2,0,1,-1), (2,-2,0,1)\}.$$

For $K_{18}$ we have $(a, b, c, d, e, f)=(2,1,-2,0,2,1)$ and we choose
\[
\pushQED{\qed} 
 V=\Span\{ (1,1,1,0), (2,1,0,1)\}.\qedhere
\popQED
\] 

Recall from Section~\ref{sec:BPH} that a necessary condition for a knot to be BPH slice is that it its Levine-Tristram signature function $\sigmaTL$ vanishes. Algebraically slice knots satisfy this, so Proposition~\ref{prop:algslice} ensures that $K_6$ through $K_{21}$ have $\sigmaTL=0$. Of course, we also have $\sigmaTL=0$ for the topologically slice examples $K_1$ through $K_5$. For the two remaining knots $K_{22}$ and $K_{23}$, the Alexander polynomial has no roots on the unit circle, and therefore $\sigmaTL$ is a constant function. Using the Seifert matrix \eqref{eq:A}, we checked that $\sigmaTL(-1) = \sigma=0$ and hence $\sigmaTL=0$. 

Second, for each of the $23$ knots, we computed the knot Floer homology using the {\em Knot Floer homology calculator} \cite{HFK}. The concordance invariants $\tau$ from \cite{OStau}, $\nu$ from \cite{RatSurg} and $\epsilon$ from \cite{Hom} vanish. As an aside, the program indicated that all the knots from our list are non-fibered, non-L-space, and have Seifert genus equal to $2$.

Third, we used the program \emph{SKnotJob} \cite{SKnotJob} to minimize the girth of the diagrams, and compute several Ramussen-type concordance invariants. Apart from the usual $s$ (which is defined from Khovanov homology over $\Q$), the program computed $s^{\F_2}$ and $s^{\F_3}$ (from Khovanov homology over $\F_2$ and $\F_3$), as well as the Lipshitz-Sarkar $s^{\operatorname{Sq}^1}$ invariant (from the first Steenrod square on Khovanov homology \cite{LS}). All the knots had girth at most $10$, and each calculation lasted only a few seconds. All the invariants turned out to be $0$. (On the other hand, computing the Lipshitz-Sarkar $s^{\operatorname{Sq}^2}_{\pm}$ invariants using a program such as \cite{SarkarSq} did not seem feasible, because our knots have at least $16$ crossings.)

Finally, from Lemma~\ref{lem:specialeven} we see that $14$ knots in Table~\ref{tab:table1} (namely, $K_1$, $K_4$, $K_5$, and $K_{13}$ through $K_{23}$) correspond to odd RBG links, because $r=a+b$ is odd. In such a case, Theorem~\ref{thm:Boyer} shows that the 0-surgery homeomorphism relating the knot and its companion does not extend to traces. Therefore, for those $14$ knots, sliceness and BPH-sliceness cannot be immediately excluded by the fact that we have obstructed their companion knot (which we knew not to be BPH-slice because $s=-2$). Moreover, modulo the caveat in footnote\footref{snap}, {\em SnapPy} indicates that the $0$-surgeries on all our $23$ knots are hyperbolic and have trivial isometry group (and hence trivial homeomorphism group, by Mostow rigidity). Hence, we expect that for the $14$ examples with $r$ odd, the trace of the knot is not even homeomorphic to that of its companion knot.

We also attempted (unsuccessfully) to show the $5$ topologically slice knots are slice. We searched for ribbon bands using Gong's program \cite{Gong}, but we could not found any simple bands that produce a strongly slice link. For one example, namely $K=K_2$, we also tried to find a slice derivative as follows: we wrote down a (genus 2) Seifert surface $F$ for $K$ and the associated Seifert matrix $A$, and then classified all dimension 2 subspaces of $H_1(F)$ on which $A$ restricts to the 0-matrix. For each such half-basis, we drew a 2-component link $L\subset S^3$ embedded on $F$ representing that basis; such a link is usually called a \emph{derivative} of $K$. It is well known (see \cite{Levinealg}) that if $K$ admits a strongly slice derivative then $K$ is slice. Unfortunately, none of the links $L$ that came out of this example were strongly slice; this was checked using Levine-Tristram signatures or covering link calculus.

Furthermore, for all $23$ knots, we tried to use Lemma~\ref{lem:ConstructBPH} to prove BPH-sliceness. However, changing any positive to a negative crossing in our diagrams resulted in knots with $\sigma=2$, which cannot be BPH-slice.

\section{Homotopy $4$-spheres from annulus twisting} \label{sec:annulus}
Annulus twisting is a construction of 0-surgery homeomorphisms which stands out for its ability to naturally produce infinitely many knots with the same 0-surgery. In view of Theorem~\ref{thm:RBG}, there are RBG links associated to annulus twisting; see Section~\ref{sec:annulusRBG} for their description. In this section we will discuss some homotopy $4$-spheres  that arise from annulus twisting, without explicit reference to the corresponding RBG links.

\subsection{Annulus twisting}

Annulus twisting was defined by Osoinach \cite{Osoinach}, and extended to other framings and to Klein bottle twisting by Abe-Jong-Luecke-Osoinach \cite{AJLO}. Since we will need to discuss it in some detail, we reproduce the proof that annulus twisting gives rise to $0$-surgery homeomorphisms.

Let $A:S^1\times I\to S^3$ be an embedding (by the standard abuse, we will conflate the embedding and its image) and let $\ell_1\cup\ell_2=\partial A$ a framed oriented link in $S^3$, where both the framing and the orientation are inherited from $A$; when $A$ is thought of an an oriented cobordism from $S^1\to S^1$, we are setting $\ell_2=\partial^+A$. Now let $\gamma$ denote a pair of pants and let $\Gamma:\gamma\to S^3$ be an embedding such that the framed oriented boundary of $\Gamma$ is $\ell_1\cup\ell_2\cup J$ for some 0-framed knot $J$. We can think of $J$ as obtained by joining a parallel copy of $\ell_1$ to a parallel copy of $\ell_2$ using a band.  See the left hand side of Figure \ref{fig:annulustwist} for examples of links $\ell_1\cup\ell_2\cup J$; there, the $K$ box represents  parallel strands that are tied in some 0-framed knot $K$, and the $k$ box represents $k$ positive full twists. See also Figure~\ref{fig:Jmk} for examples of knots $J$ that arise this way, when $K$ is the unknot.

\begin{figure}
{
   \fontsize{9pt}{11pt}\selectfont
   \def\svgwidth{6in}
   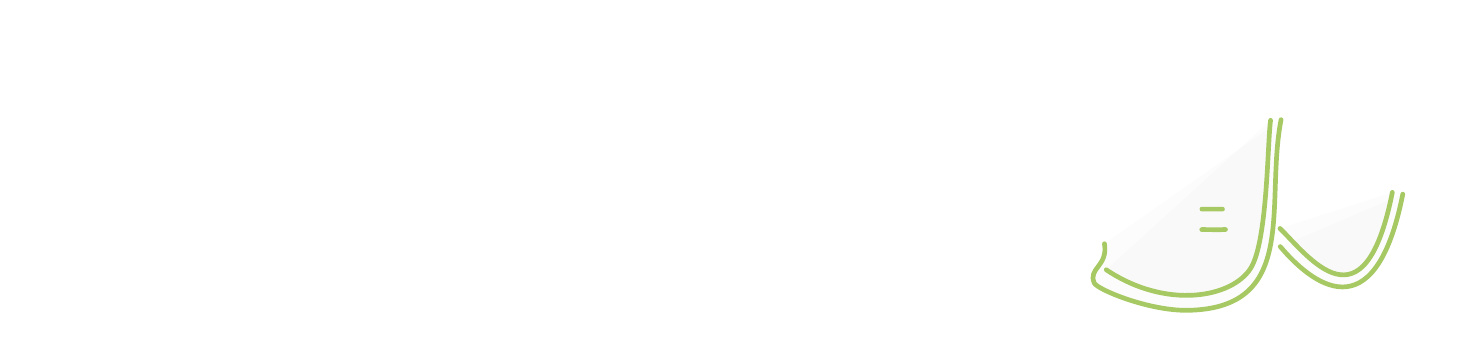
\vspace*{3mm}
}
\caption{Annulus twisting a link $\ell_1\cup\ell_2\cup J$, with $\ell_1$ and $\ell_2$ drawn in purple and $J$ in green. Here $n=1$ and we keep track of the image of a 0-framed meridian of $J$ under the annulus twist homeomorphism.} 
\label{fig:annulustwist}
\end{figure}

\begin{figure}
{
   \fontsize{10pt}{12pt}\selectfont
   \def\svgwidth{2.3in}
\begingroup%
  \makeatletter%
  \providecommand\color[2][]{%
    \errmessage{(Inkscape) Color is used for the text in Inkscape, but the package 'color.sty' is not loaded}%
    \renewcommand\color[2][]{}%
  }%
  \providecommand\transparent[1]{%
    \errmessage{(Inkscape) Transparency is used (non-zero) for the text in Inkscape, but the package 'transparent.sty' is not loaded}%
    \renewcommand\transparent[1]{}%
  }%
  \providecommand\rotatebox[2]{#2}%
  \newcommand*\fsize{\dimexpr\f@size pt\relax}%
  \newcommand*\lineheight[1]{\fontsize{\fsize}{#1\fsize}\selectfont}%
  \ifx\svgwidth\undefined%
    \setlength{\unitlength}{245.14730378bp}%
    \ifx\svgscale\undefined%
      \relax%
    \else%
      \setlength{\unitlength}{\unitlength * \real{\svgscale}}%
    \fi%
  \else%
    \setlength{\unitlength}{\svgwidth}%
  \fi%
  \global\let\svgwidth\undefined%
  \global\let\svgscale\undefined%
  \makeatother%
  \begin{picture}(1,0.6998421)%
    \lineheight{1}%
    \setlength\tabcolsep{0pt}%
    \put(0,0){\includegraphics[width=\unitlength,page=1]{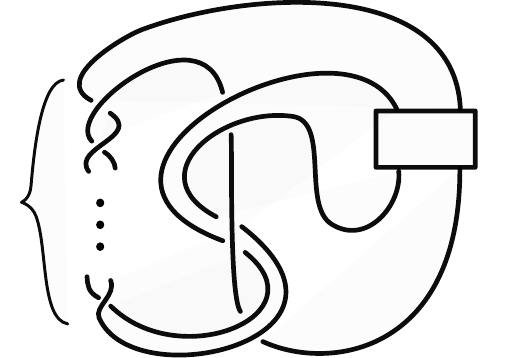}}%
    \put(-0.01538055,0.28812671){\color[rgb]{0,0,0}\makebox(0,0)[lt]{\lineheight{1.25}\smash{\begin{tabular}[t]{l}$m$\end{tabular}}}}%
    \put(0.8109403,0.41275517){\color[rgb]{0,0,0}\makebox(0,0)[lt]{\lineheight{1.25}\smash{\begin{tabular}[t]{l}$k$\end{tabular}}}}%
    \put(0,0){\includegraphics[width=\unitlength,page=2]{jkm.pdf}}%
  \end{picture}%
\endgroup%

}
\caption{A family of knots $J_m[k]$.  When $m=3-2k$ the knots are ribbon, which can be seen by performing the purple band move indicated.}
\label{fig:Jmk}
\end{figure}

\begin{theorem}[Main theorem of \cite{Osoinach}]\label{thm:annulus}
Associated to such a link $\ell_1\cup\ell_2\cup J$ there is an infinite family of knots $(J_n)_{n \in \Z}$ such that $S^3_0(J_n)\cong S^3_0(J)$.
\end{theorem}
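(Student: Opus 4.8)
The plan is to realize the homeomorphism directly at the level of surgery diagrams, following Osoinach \cite{Osoinach}, by reinterpreting the order-$n$ annulus twist as a sequence of Rolfsen twists that are homeomorphisms of the \emph{surgered} $3$-manifold rather than ambient isotopies of $S^3$. Concretely, I would begin with the standard surgery diagram of $S^3_0(J)$, i.e. $J$ with framing $0$, and use the pair of pants $\Gamma$ to exhibit $J$ as a band sum of parallel copies of $\ell_1$ and $\ell_2$. This lets me enlarge the diagram by reintroducing $\ell_1$ and $\ell_2$ as two spectator unknots carrying the framing $\infty$; since $\infty$-framed components may be erased, this does not change the underlying manifold, and we still have $S^3_0(J)$ together with the annulus $A$ visible in the diagram.

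The core step is to perform the annulus twist by Rolfsen-twisting $\ell_1$ and $\ell_2$. Each such move is a homeomorphism of the closed surgered $3$-manifold: it changes the framings on $\ell_1,\ell_2$ (from $\infty$ to a finite slope) and inserts $n$ full twists into every strand threading the annulus $A$, in particular into the strands of $J$. I would carry this out exactly as in Figure~\ref{fig:annulustwist}, keeping track of the image of the $0$-framed meridian $\mu$ of $J$ under the successive moves $\Phi$ and $F$; this bookkeeping is what certifies that after the twist the modified knot, which we call $J_n$, is again $0$-framed. Once the twisting has been absorbed into $J$, the curves $\ell_1$ and $\ell_2$ bound the (now twisted) annulus and can be returned to $\infty$-framing and erased, or slam-dunked away. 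Because every move in this sequence is a homeomorphism of the ambient closed $3$-manifold, the net effect is a homeomorphism $S^3_0(J)\to S^3_0(J_n)$, and letting $n$ range over $\Z$ (with $J_0=J$) produces the asserted family $(J_n)_{n\in\Z}$.

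The main obstacle, and the step deserving the most care, is the framing bookkeeping: one must verify that the annulus framing induced on $\ell_1$ and $\ell_2$ is precisely the one for which the framing shifts produced by the Rolfsen twists cancel against the change in self-linking of $J$, so that $J_n$ inherits framing $0$ rather than some nonzero slope. Tracking $\mu$ through the homeomorphism (as recorded in Figure~\ref{fig:annulustwist}) is exactly the device that makes this transparent, and comparing with the sample knots $J_m[k]$ of Figure~\ref{fig:Jmk} gives a consistency check. It is worth emphasizing that we never claim a self-homeomorphism of $S^3$ carrying $J$ to $J_n$ — such a map would force $J_n$ to be isotopic to $J$ by \cite{Cerf} — rather, the twists are homeomorphisms of the surgered manifold $S^3_0(J)$, which is what allows $J_n$ to be a genuinely different knot while the $0$-surgery is preserved.
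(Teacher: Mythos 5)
Your overall strategy coincides with the paper's: the homeomorphism $S^3_0(J)\to S^3_0(J_n)$ is built as a composite of a twist performed inside the surgered manifold (changing the coefficients on $\ell_1,\ell_2$ from $\infty$ to $\pm 1/n$) followed by a twist performed in $S^3$ along $A$ (erasing $\ell_1,\ell_2$ and converting $J$ into $J_n$), and you correctly insist that these are homeomorphisms of the surgered manifold rather than ambient isotopies. The gap is in the justification of the first, and only nontrivial, step. You propose to change the coefficients on $\ell_1,\ell_2$ from $\infty$ to finite slopes by ``Rolfsen-twisting'' them as ``spectator unknots.'' A Rolfsen twist applies only to an unknotted surgery component and inserts twists into the strands meeting a spanning \emph{disk} of that component; in Osoinach's setting $\ell_1$ and $\ell_2$ are in general knotted (the annulus $A$ need not be standardly embedded), they bound no disks, and the twisting must take place along the annulus, not through two disks. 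Even when both components happen to be unknots, two Rolfsen twists are not the annulus twist. The idea your argument is missing is the paper's first claim: in $S^3_0(J)$ the pair of pants $\Gamma$ is capped off by the core disk of the surgery solid torus (this is exactly where the hypothesis that $\Gamma$ induces the $0$-framing on $J$ enters), producing a properly embedded annulus $\Gamma'$ with $\partial\Gamma'=\ell_1\cup\ell_2$ in the exterior $Y$ of $\ell_1\cup\ell_2$ in $S^3_0(J)$; the twist $\phi_n(\theta,x,t)=(\theta+2\pi n t,x,t)$ of $\nu(\Gamma')\cong S^1\times I\times I$ is the identity near $S^1\times I\times\partial I$, hence extends by the identity to all of $Y$, and carries the meridians of $\ell_i$ to the $\pm\frac{1}{n}$ curves. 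Without this, the assertion $S^3_0(J)\cong S^3_{0,1/n,-1/n}(J,\ell_1,\ell_2)$ is unsupported. You instead use $\Gamma$ only to write $J$ as a band sum of pushoffs of $\ell_1,\ell_2$, which is not what is needed.

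There are two further bookkeeping errors downstream of this. First, your single combined move both changes the $\ell_i$ coefficients from $\infty$ to finite slopes \emph{and} inserts the $n$ twists into $J$; in the correct argument the first twist leaves $J$ untouched, and $J$ only becomes $J_n$ under the second twist (along $A\subset S^3$), which simultaneously returns the $\ell_i$ to erasable coefficients. As written, your final step of ``returning $\ell_1,\ell_2$ to $\infty$-framing and erasing them'' is not a legal diagram move once they carry finite slopes. Second, while your plan of tracking the meridian $\mu$ to certify the framing of $J_n$ can be made to work, the paper's argument is cleaner and avoids the delicate cancellation you worry about: once one knows $S^3_0(J)\cong S^3_{r_n}(J_n)$ for some $r_n$, the condition $H_1(S^3_{0,1/n,-1/n}(J,\ell_1,\ell_2))\cong\Z$ forces $r_n=0$ with no diagrammatic bookkeeping at all.
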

\begin{remark}
We make no claim on the distinctness of the knots $J_n$ here, but the interested reader should consult \cite{BGL}, which gives some weak conditions on $\ell_1\cup\ell_2\cup J$ that guarantee infinitely many of the $J_n$ are pairwise distinct. 
\end{remark}

\begin{proof}
The proof follows from two claims: first we will show that $S^3_0(J)\cong S^3_{0,1/n,-1/n}(J,\ell_1,\ell_2)$. Second we will define $J_n$ and show that $S^3_{0,1/n,-1/n}(J,\ell_1,\ell_2)\cong S^3_0(J_n)$. We remark that these surgery coefficients are relative to the given framing (which often differs from the Seifert framing).

Towards the first claim, observe that in $S^3_0(J)$ the pair of pants $\Gamma$ can be capped off with the surgery disk to give an embedded annulus $\Gamma'$ with framed oriented boundary $\ell_1\cup \ell_2\subset S^3_0(J)$. We'll use $\Gamma'$ to define a homeomorphism $$\Phi_n: S^3_0(J) \to S^3_{0,1/n,-1/n}(J,\ell_1,\ell_2)$$ as follows. Consider $Y=S^3_0(J)\smallsetminus \nu(\ell_1\cup\ell_2)$ and observe that $\Gamma'$ restricts to a properly embedded annulus in $Y$. Consider $\nu(\Gamma')\cong S^1\times I\times I$, where the final $I$ factor comes from the normal direction. Define the homeomorphism $$\phi_n:S^1\times I\times I \to S^1\times I\times I, \ \ \ \phi_n(\theta,x,t)=(\theta+n(2\pi i)t,x,t).$$ Since $\phi_n|_{S^1\times I\times\partial I}$ is the identity map, we can extend $\phi_n$ to a self homeomorphism $\Phi_n$ of $Y$ by taking the identity map outside of $\nu(\Gamma')$.

We will now extend $\Phi_n$ to a homeomorphism with domain $S^3_0(J)$. We can do this by simply reattaching the excised neighborhoods $\nu(\ell_1\cup\ell_2)$, but in the target space we must take care to reattach along the gluing map modified by $\Phi_n$. We see that $\Phi_n|_{\partial Y}$ takes the meridional curve in $\partial(\nu(\ell_1))$ (resp. $\partial(\nu(\ell_2))$) to the $\smash{\frac{1}{n}}$ curve on $\partial(\nu(\ell_1))$ (resp. $\smash{\frac{-1}{n}}$ curve on $\partial(\nu(\ell_2))$). Thus the first claim follows. 

To prove the second claim, observe that by hypothesis $\ell_1\cup \ell_2$ cobound an annulus $A$ in $S^3$. We can twist $S^3$ along $A$ as in the proof of the first claim to produce a homeomorphism 
$$f_n:S^3_{1/n, -1/n}(\ell_1\cup\ell_2)\to S^3.$$ Since the knot $\iota_n(J)\subset S^3_{1/n, -1/n}(\ell_1\cup\ell_2)$ (where the embedding $\iota_n$ is given by any diagram of $J\cup\ell_1\cup\ell_2$) may intersect $A$, the image $f_n(\iota_n(J))$ is some a priori new knot $J_n\subset S^3$. Surgering on $\iota_n(J)$ and its image under $f_n$, we get a homeomorphism $$F_n:S^3_{0,1/n,-1/n}(J,\ell_1,\ell_2)\to S^3_{r_n}(J_n).$$ In general one would now need to inspect $f_n$ to compute $r_n$, but since $H_1(S^3_{0,1/n,-1/n}(J,\ell_1,\ell_2))\cong \Z$ we can conclude $r_n=0$.
\end{proof}

\subsection{Examples}
Consider the family of knots $J_m[k]$ pictured in Figure~\ref{fig:Jmk}. (When $k=1$, this recovers the family $J_m$ from \cite[Figure 4]{AJOT}.) We ask for $m$ to be odd but we allow it to be negative---in which case we have $|m|$ left-hand half twists in the picture. Similarly, we allow the number $k$ of full twists on the right to be an arbitrary integer. In all figures we are taking the annulus $A\cong S^1\times I$ to be oriented such that the $S^1$ factor is counterclockwise in the figure and the $I$ factor points radially inward. For this orientation of $A$, $\ell_2$ is the oriented curve marked in Figure \ref{fig:annulustwist} and in the standard orientation of $S^3$, the normal direction $t$ to $A$ points into the page.

Annulus twisting applied to each $J=J_m[k]$ (with $\ell_1$ and $\ell_2$ as in Figure~\ref{fig:rbgforannulus}) produces an infinite family of knots, $J_m[k]_n$, with the same $0$-surgeries as we vary $n$. Of these, we show $J_m[k]_{1}$ and $J_m[k]_{-1}$ in Figure~\ref{fig:Jmktw}.

\begin{figure}
{
   \fontsize{10pt}{12pt}\selectfont
   \def\svgwidth{5in}
\begingroup%
  \makeatletter%
  \providecommand\color[2][]{%
    \errmessage{(Inkscape) Color is used for the text in Inkscape, but the package 'color.sty' is not loaded}%
    \renewcommand\color[2][]{}%
  }%
  \providecommand\transparent[1]{%
    \errmessage{(Inkscape) Transparency is used (non-zero) for the text in Inkscape, but the package 'transparent.sty' is not loaded}%
    \renewcommand\transparent[1]{}%
  }%
  \providecommand\rotatebox[2]{#2}%
  \newcommand*\fsize{\dimexpr\f@size pt\relax}%
  \newcommand*\lineheight[1]{\fontsize{\fsize}{#1\fsize}\selectfont}%
  \ifx\svgwidth\undefined%
    \setlength{\unitlength}{713.80764771bp}%
    \ifx\svgscale\undefined%
      \relax%
    \else%
      \setlength{\unitlength}{\unitlength * \real{\svgscale}}%
    \fi%
  \else%
    \setlength{\unitlength}{\svgwidth}%
  \fi%
  \global\let\svgwidth\undefined%
  \global\let\svgscale\undefined%
  \makeatother%
  \begin{picture}(1,0.39276549)%
    \lineheight{1}%
    \setlength\tabcolsep{0pt}%
    \put(0,0){\includegraphics[width=\unitlength,page=1]{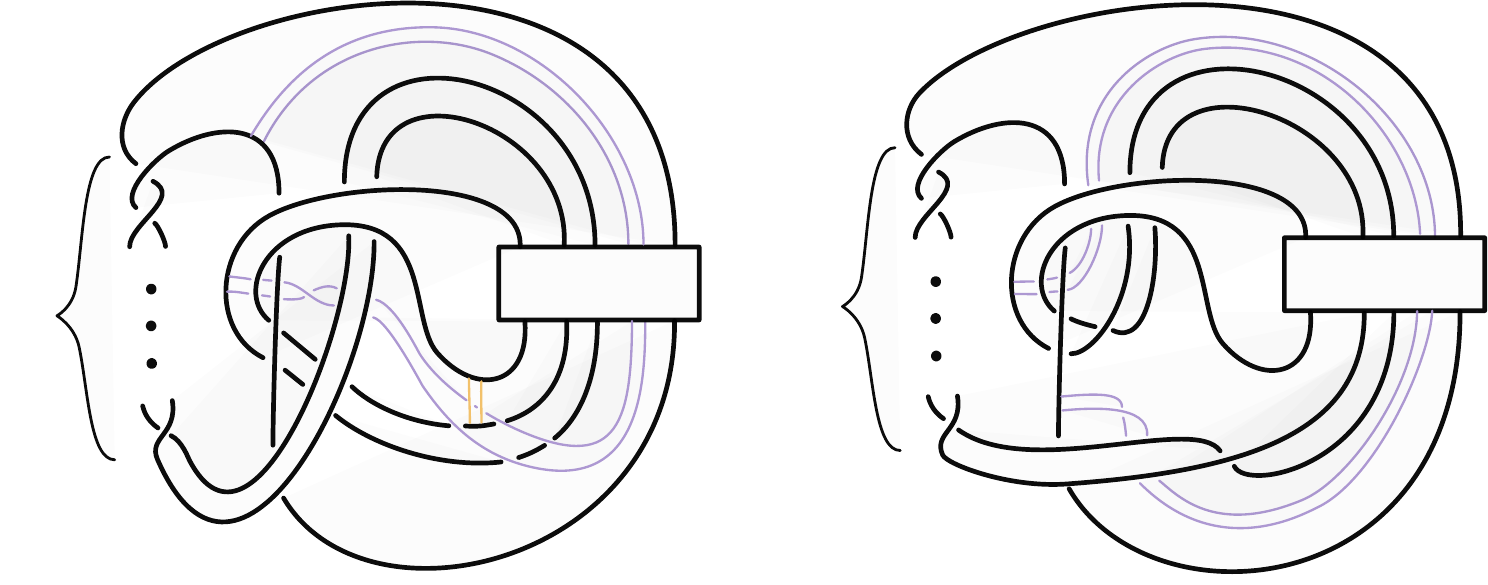}}%
    \put(0.00102197,0.17238895){\color[rgb]{0,0,0}\makebox(0,0)[lt]{\lineheight{1.25}\smash{\begin{tabular}[t]{l}$m$\end{tabular}}}}%
    \put(0.3908684,0.19465829){\color[rgb]{0,0,0}\makebox(0,0)[lt]{\lineheight{1.25}\smash{\begin{tabular}[t]{l}$k$\end{tabular}}}}%
    \put(0.5322473,0.17859572){\color[rgb]{0,0,0}\makebox(0,0)[lt]{\lineheight{1.25}\smash{\begin{tabular}[t]{l}$m$\end{tabular}}}}%
    \put(0.92209366,0.20086507){\color[rgb]{0,0,0}\makebox(0,0)[lt]{\lineheight{1.25}\smash{\begin{tabular}[t]{l}$k$\end{tabular}}}}%
    \put(0,0){\includegraphics[width=\unitlength,page=2]{jmktwist.pdf}}%
  \end{picture}%
\endgroup%

}
\caption{The knots $J_m[k]_{1}$ and $J_m[k]_{-1}$  obtained from the knots in Figure~\ref{fig:Jmk} by annulus twisting. When $m=3-2k$ the knots are ribbon, which can be seen by performing the purple and yellow band moves indicated.
}
\label{fig:Jmktw}
\end{figure}

As in Section~\ref{sec:computer}, in the hope of producing an exotic $S^4$ or $\#^n \CP$, we looked for examples where one of the $J_m[k]$ and $J_m[k]_{\pm 1}$ was slice (or H-slice in $\#^n \CP$), and another was not. For this, we would like that the knots satisfy $\sigma=0$, and the values of the $s$ invariant are different. 

Observe that if $k=0$, then $J_m[k]$ and $J_m[k]_{\pm 1}$ are unknotted. Furthermore, when $k=\pm 1$, the knots $J_m[k]$ and $J_m[k]_{\pm 1}$ share the same trace; this was originally proven in \cite[Theorem 2.8]{AJOT}; the readers can see this for themselves as a consequence of Theorem \ref{thm:propU} and Remark  \ref{rem:annulusparity}. Thus, in view of Lemma~\ref{lem:TEL}, the only hope for interesting examples comes from the cases $|k| \geq 2$. As noted in Remark \ref{rem:annulusparity}, for the annulus knots in Figure \ref{fig:Jmk}, the homeomorphism coming from twisting $J_m[k]$ once in either direction is odd if and only if $k$ is even, so the setting $k=2$ should generically yield knots which do not have homeomorphic traces. 

Unfortunately, we found no examples with $\sigma=0$ and different $s$ for small values of $k$ and $m$. Table~\ref{tab:table2} displays the values of the pair $(\sigma, s)$ for $J_m[k]$. For all the knots in the table except the one marked in blue, namely $J_1[-2]=11n79$, the value of $s$ stays the same when we do annulus twisting in either direction. For $J_1[-2]$, we have $s=0$ but both annulus twists, $J_1[-2]_{1}$ and $J_1[-2]_{-1}$, have $s=2$ instead of $s=0$. (However, the signature is nonzero in this example.)
 
\renewcommand{\arraystretch}{1.5}
\begin{table}
\begin{center}
\begin{tabular}{|r|| c | c | c | c | c | c | }
\hline
\backslashbox{$m$}{$k$} & -2 & -1 & 0 & 1 & 2 & 3 \\
\hline \hline
-5 & (2, 2) & (2, 2) &  \textcolor{red}{(0, 0)} & (0, 0) & (0, 0) & (0, 0) \\ \hline
-3 & (2,  2) & (2, 2) &  \textcolor{red}{(0, 0)} & (0, 0) & (0, 0) & \textcolor{red}{(0, 0)}\\ \hline
-1 & (2, 2) & (2, 2) &  \textcolor{red}{(0, 0)} & (0, 0) & \textcolor{red}{(0, 0)} & (0, 0)\\ \hline
1 & \textcolor{blue}{(2, 0)} & (2, 0) &  \textcolor{red}{(0, 0)} & \textcolor{red}{(0, 0)} & (0, 0) & (0, 0)\\ \hline
3 & (2, 0) & (2, 0) &  \textcolor{red}{(0, 0)} & (0, -2) & (0, -2) & (0, -2)\\ \hline
5 & (0, 0) &  \textcolor{red}{(0, 0)} &  \textcolor{red}{(0, 0)} & (0, -2) & (0, -2) & (0, -2)\\ \hline
7 &  \textcolor{red}{(0, 0)} & (0, 0) &  \textcolor{red}{(0, 0)} & (-2, -2) & (-2, -2) & (-2, -2)\\ \hline
\end{tabular}
\end{center}
 \caption{Values of $(\sigma, s)$ for knots of the form $J_m[k]$.} 
     \label{tab:table2}
\end{table}

A few remarks are in order about the knots in Table~\ref{tab:table2}. We marked in red those knots where we know that the knot and its annulus twists are slice. For the knots $J_{3-2k}[k]$ and their annulus twists, Figures \ref{fig:Jmk} and \ref{fig:Jmktw} show the existence of ribbon bands relating them to the unlink.  Note that many of these (untwisted) examples can be recognized from knot tables. Indeed, we have:
$$ J_{5}[-1]=13n469, \ \ \ J_3[0]=\text{unknot}, \ \ \ J_1[1]=8_{20}, \ \ \ J_{-1}[2] = 8_8, \ \ \ J_{-3}[3]=13n1209.$$

Observe also that all knots between the vertical unknot line and the diagonal slice line (that is, $J_m[k]$ with $k\geq 0, m \leq 3-2k$ or $k \leq 0, m \geq 3-2k$) are BPH-slice, and so are their annulus twists. This is because we can get from them to a slice knot by changing crossings in either direction; cf. Lemma~\ref{lem:ConstructBPH}. 

\begin{remark}
The knots $J_{-1}[2]=8_8$ and $J_{-1}[2]_1$ appear in the list considered in Section~\ref{sec:computer}, as $K_G(2,1,0,0,1,-1)$ and $K_B(2,1,0,0,1,-1)$. Furthermore, the knots $J_{-1}[2]=8_8$ and $J_{-1}[2]_{-1}$ appear in the list as $K_G(-1,0,-2,1,0,1)$ and $K_B(-1,0,-2,1,0,1)$; and also as $K_G(1,0,0,0,2,-1)$ and $K_B(1,0,0,0,2,-1)$.
\end{remark} 

\subsection{Homotopy 4-spheres} 
Suppose that $K$ is a slice knot, and that $K$ and $K'$ admit a $0$-surgery homeomorphism $\phi: S^3_0(K) \to S^3_0(K')$. As in the proof of Lemma \ref{lem:slicehomeo}, we can construct the homotopy $4$-sphere
\begin{equation}
\label{eq:XV}
X=X(-K') \cup_{\phi} V,
\end{equation}
where $V$ is any slice disk exterior for $K$. Even when we know that both $K$ and $K'$ are slice, it is not clear that $X$ is a standard $4$-sphere. We present below some examples of such homotopy $4$-spheres, for which we could not verify that $X$ is $S^4$. Many more homotopy 4-spheres can be constructed via the techniques of this paper; for example by taking $K$ to be a slice knot which satisfies the hypothesis of Lemma \ref{lem:untwisting}. We demonstrate via our examples how one can draw explicit handle decompositions of homotopy spheres $X$ built as in \eqref{eq:XV}.

\begin{example}\label{ex:htpy88}
We first give examples coming from the knot $K=J_{-1}[2]=8_8$ and $K'$ any of its annulus twists. The knot $8_8$ is slice; we have exhibited a ribbon band, and hence ribbon disk $D$, in Figure \ref{fig:Jmk}. To draw a handle decomposition of $V=B^4\setminus\nu(D)$ we use the rising water principle; see \cite[Chapter 6.2]{GS}. This decomposition is given by the black and purple curves in Figure \ref{fig:htpy88}. 

\begin{figure}
{
   \fontsize{10pt}{12pt}\selectfont
   \def\svgwidth{5in}
   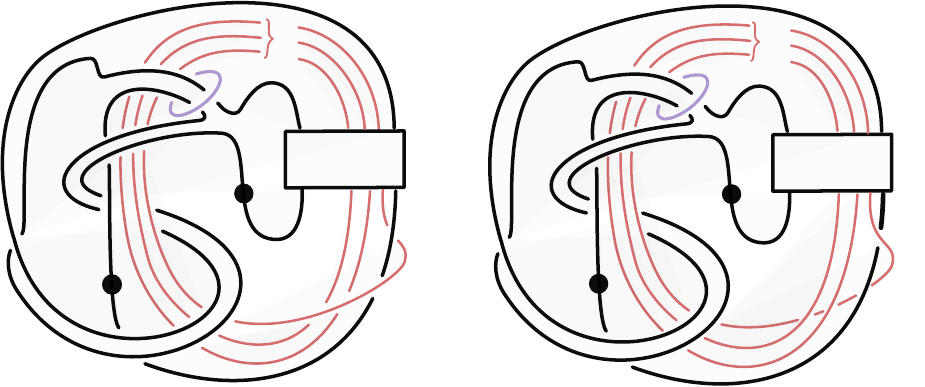
}
\caption{Homotopy spheres associated to annulus twisting $8_8$. Left $n>0$, right $n<0$.}
\label{fig:htpy88}
\end{figure}

As argued in the proof of Lemma \ref{lem:slicehomeo}, $X$ has a handle diagram obtained from that of $V$ by adding an additional 2-handle along $\phi(\mu_{K'}, 0)$, followed by a 4-handle. In Figure \ref{fig:annulustwist} we give an example identification of the framed curve $\phi(\mu_{K'})$ in $S^3_0(K)$, where $\phi$ is the homeomorphism given by annulus twisting $J_m(k)$ once and $\mu(K')$ is 0-framed. In Figure \ref{fig:htpy88} we exhibit in red the framed curve $\phi_n(\mu_{K'})\subset \partial V$ where $\phi_n$ is the homeomorphism given by annulus twisting $8_8$ $n$ times, for $n\neq 0$. These images are computed by inspection of the annulus twist homeomorphism, which we gave explicitly in the proof of Theorem \ref{thm:annulus}. 
\end{example}

\begin{remark}
Kyle Hayden informed us that for $n=1$, the homotopy $4$-sphere shown in Figure~\ref{fig:htpy88} is standard. 
\end{remark}

\begin{remark}\label{rem:annulusparity}
By keeping track of the image of a meridian as we have done in Figure \ref{fig:annulustwist}, it is straightforward to check that annulus twisting once in either direction is even if and only if $k$ is odd, and that annulus twisting has property $U$ when $K=U$ and $k=\pm 1$. 
\end{remark}

\section{Connections with other constructions}\label{sec:otherconstr}
In this section we draw RBG links for some of the 
0-surgery homeomorphisms in the literature. Theoretically, there is no work to this; the procedure to draw an RBG link for a fixed homeomorphism is given in the proof of Theorem \ref{thm:RBG}. However, we find the pictures to be a helpful reference, thus we include them here. We have also made some effort to give particularly elegant RBG links where possible. 

Some relationships between annulus twisting, the methods in \cite{Pic1}, and dualizable patterns have already been illustrated in the literature; see \cite{MP17, Pic1, Tagami20}. 

\subsection{Preliminaries}
Recall that Dehn surgery diagrams can be modified by ``handle sliding'' without changing the homeomorphism type of the manifold described. We observe now that certain handle slides of RBG links are RBG preserving; this proposition will allow us to draw particularly nice RBG links for some constructions. 

\begin{proposition}\label{prop:RBGslides}
Let $L$ be an RBG link and $L'$ be a framed link obtained from $L$ by some number of slides of $B$ over $R$. Then $L'$ (with suitable choices of homeomorphisms $\psi_B'$ and $\psi_G'$) is an RBG link,   and the pair of knots $K_B'$ and $K_G'$ associated to $L'$ are pairwise isotopic to the pair of knots $K_B$ and $K_G$ associated to $L$.
\end{proposition}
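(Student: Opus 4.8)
The plan is to exploit the single structural fact that a slide of $B$ over $R$ modifies only the $B$ component: the sublink $R\cup G$, together with its framings $r$ and $g$, is left completely untouched. Writing $L'=R\cup B'\cup G$ with $B'$ the slid component and $b'$ its new framing, it suffices to treat one slide and iterate, since the hypothesis allows arbitrarily many. I would then check the three conditions of Definition~\ref{def:RBG} for $L'$ and finally the identification of the two associated knots, in that order.

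For the RBG conditions, I would first observe that because $R\cup G$ is unchanged, the manifold $S^3_{r,g}(R\cup G)$ is literally the same as before, so one may simply take $\psi_B':=\psi_B$. For $\psi_G'$, I would recall that a slide of $B$ over $R$ is a Kirby handle slide and hence induces a canonical homeomorphism $\sigma\colon S^3_{r,b'}(R\cup B')\to S^3_{r,b}(R\cup B)$; composing with $\psi_G$ produces the required homeomorphism to $S^3$. Performing the same slide in the full diagram gives $S^3_{r,b',g}(L')\cong S^3_{r,b,g}(L)$, whence $H_1(S^3_{r,b',g}(L');\Z)=H_1(S^3_{r,b,g}(L);\Z)=\Z$. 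This shows $L'$ is an RBG link.

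For the associated knots, I would argue the two colors separately. Since the ambient manifold $S^3_{r,g}(R\cup G)$ and the homeomorphism $\psi_B$ are unchanged, proving $K_B'\cong K_B$ reduces to showing $B'\cong B$ inside $S^3_{r,g}(R\cup G)$: the slide replaces $B$ by the band sum $B\# R^{(f)}$, where $R^{(f)}$ is the $r$-framed pushoff of $R$, i.e.\ the curve $r\mu_R+\lambda_R$ on $\partial\nu(R)$; after $r$-surgery this is exactly the meridian of the re-glued solid torus, so it bounds a meridian disk $D$ disjoint from $B$, and sliding the band across $D$ undoes the band sum. For $K_G'\cong K_G$, the curve $G$ is itself unchanged but the ambient manifold is now $S^3_{r,b'}(R\cup B')$; I would take the homeomorphism $\sigma$ above to be supported in a neighborhood of $R$ together with the sliding band, hence disjoint from $G$, so that $\sigma(G)\cong G$. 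Invoking the uniqueness of the orientation-preserving homeomorphism of $S^3$ up to isotopy \cite{Cerf} (as in the remark following Definition~\ref{def:RBG}) frees the isotopy class of $K_G'$ from the choice of $\psi_G'$, and taking $\psi_G':=\psi_G\circ\sigma$ gives $K_G'=\psi_G(\sigma(G))\cong\psi_G(G)=K_G$.

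The only genuine content, and the step I expect to be the main obstacle to state cleanly, is the isotopy $B'\cong B$ in the surgered manifold, namely the standard but essential fact that a handle slide over $R$ becomes invisible once $R$ has been surgered, precisely because the framed pushoff $R^{(f)}$ caps off against the meridian disk of the surgery torus. Everything else is bookkeeping with Kirby moves, together with the Cerf-uniqueness observation that lets me avoid tracking the specific homeomorphisms $\psi_B$ and $\psi_G$.
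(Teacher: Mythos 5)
Your proof is correct and its overall skeleton matches the paper's: first verify the three conditions of Definition~\ref{def:RBG} for $L'$ (your argument here is essentially identical to the paper's --- $R\cup G$ is untouched, the slide homeomorphism handles $R\cup B'$, and homology is a homeomorphism invariant), then identify the associated knots. Where you genuinely diverge is in the mechanism for the identification step. The paper forms the composite $\psi_B'\circ f\circ(\psi_B)^{-1}:S^3_0(K_B)\to S^3_0(K_B')$, where $f$ is the slide homeomorphism of the \emph{fully} surgered manifolds, tracks a $0$-framed meridian through each leg, excises it to obtain a homeomorphism of knot exteriors carrying meridian to meridian, and concludes with a homeomorphism of pairs $(S^3,K_B)\to(S^3,K_B')$; the identical meridian-tracking handles the $G$ color. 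You instead work one surgery level down: for $B$ you isotope the slid curve $B'$ back to $B$ inside $S^3_{r,g}(R\cup G)$ by capping the framed pushoff of $R$ with the meridian disk of the re-glued solid torus, and for $G$ you localize the slide homeomorphism $\sigma$ away from $G$. Your route is more geometric and makes explicit \emph{why} a slide is invisible after surgery; the paper's is more uniform across the two colors and sidesteps having to argue about the support of $\sigma$ or the undoing of the band sum. Both ultimately rest on the same Cerf-uniqueness input. One minor caveat (shared implicitly with the paper, whose framings are rational in general): your formula $r\mu_R+\lambda_R$ for the pushoff only covers integral $r$; for $r=p/q$ the slide should be interpreted as band-summing with the curve $p\mu_R+q\lambda_R$ on $\partial\nu(R)$, which is still exactly the curve your meridian-disk argument needs.
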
 

\begin{remark}
We also note that by symmetry of the RBG construction, the proposition also holds with the roles of $B$ and $G$ reversed.  
\end{remark}

\begin{proof}
Recall that a handle slide is just an isotopy inside an ambient manifold. Fix a choice of handle slide isotopy of $B$ in $S^3_{r}(R)$ which, when viewed diagrammatically,  changes $B$ into $B'$.  First we check that $L'$ is an RBG link. Since $R$ and $G$ are unchanged, the framed link $G\cup R$ still surgers to $S^3$, and we can take the same $\psi_B$ as we used in $L$.  Since the framed link $B\cup R$  surgered to $S^3$ before the slides, the framed link $B'\cup R$ surgers to $S^3$. From our choice of the isotopy which slides $B$ over $R$ we extract a well-defined homeomorphism $h:S^3_{r,b}(R,B)\to S^3_{r,b'}(R,B')$ which is just the final map of the isotopy.  We then take $\psi'_G$ to be $\psi_G\circ h^{-1}$.  Finally observe that handle slides preserve the homeomorphism type, hence homology type, of the resulting manifold. 

To check that $K_B\cong K_B'$ we will exhibit a homeomorphism from $S^3_0(K_B)$ to $S^3_0(K_B')$ which carries a 0-framed meridian to a 0-framed meridian. By excising these meridians, we observe that there is a homeomorphism from $S^3\smallsetminus\nu(K_B)$ to $S^3\smallsetminus\nu(K_B')$ taking meridians to meridians. It follows that there is a homeomorphism of pairs $(S^3,K_B)\to (S^3,K_B')$, hence that the knots are isotopic. We will show that  $K_G\cong K_G'$  in the same manner. 

To define our homeomorphism $S^3_0(K_B)\to S^3_0(K_B')$ recall that (as in the proof of theorem \ref{thm:RBG}) the RBG construction comes induces homeomorphisms $\psi_B^*:S^3_{r,b,g}(L)\to S^3_0(K_B)$ and $\psi_{B'}^*:S^3_{r,b',g}(L')\to S^3_0(K_B')$. Let $f:S^3_{r,b,g}(L)\to S^3_{r,b',g}(L')$ be the homeomorphism induced by the handle slide, and consider $\psi_{B'}^*\circ f\circ (\psi_B^*)^{-1}:S^3_0(K_B)\to S^3_0(K_B')$. Now we'll watch a 0-framed meridian of $K_B$ under each leg of the map: under $(\psi_B^*)^{-1}$ it goes to a 0-framed meridian of $B$, under $f$ to a 0-framed meridian of $B'$, and under $\psi_{B'}^*$ to a 0-framed meridian of $K_B'$. The construction and meridian-watching for $G$ is similar and left to the reader.  
\end{proof}


\subsection{Annulus twisting}
\label{sec:annulusRBG}

To draw an RBG link for annulus twisting we follow the process outlined in the proof of Theorem \ref{thm:RBG}. In Figure \ref{fig:rbgforannulus} we have carried a (red) 0-framed meridian of $K$  through the homeomorphism to $S^3_0(J_1)$. Here the framing label on red is in terms of the usual convention, i.e. with respect to the diagramatic Seifert framing. Thus we obtain an RBG link for annulus twisting from the left hand frame of Figure \ref{fig:rbgforannulus} by adding a blue 0-framed meridian to the red curve.

\begin{figure}
{
   \fontsize{9pt}{11pt}\selectfont
   \def\svgwidth{7in}
   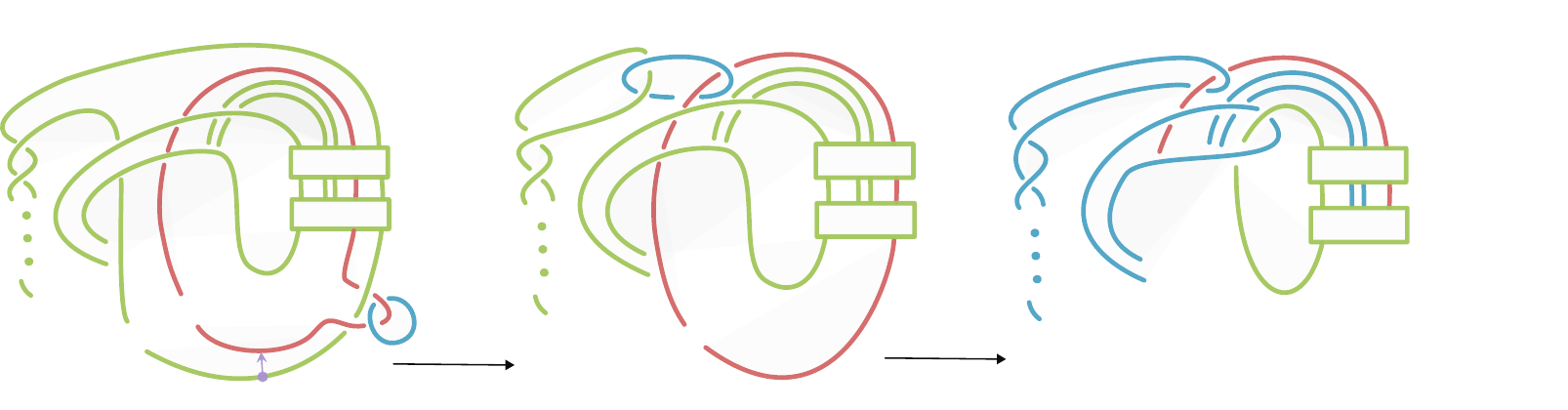
   \vspace*{-5mm}
}
\caption{Simplifying an RBG link for annulus twisting.}
\label{fig:rbgforannulus}
\end{figure}

By Proposition \ref{prop:RBGslides}, we can modify this RBG by slides of $B$ or $G$ over $R$ as we like.  We exhibit a particularly simple RBG link by considering the slide and isotopy shown in Figure \ref{fig:rbgforannulus}. We remark that if $K$ is non-trivial or $k\neq\pm 1$, then this RBG link is not special. 

\subsection{Yasui's construction}\label{sec:Yasui}
In \cite{Yasui} Yasui gives a construction of knots with homeomorphic 0-surgeries which he uses to disprove the Akbulut-Kirby conjecture. While we enthusiastically recommend his paper, in fact we will model our diagrams off of the (reproduced) proof of his construction given in Section 2.1 of \cite{HMP19}.

We restrict to the $n=0$ setting and consider a (red) 0-framed meridian of the green curve in Figure 3(a) of \cite{HMP19}. We then inspect the image of that red meridian under the handle calculus in Figure 3 of  \cite{HMP19}. Part (h) of that figure, together with image of the red curve, and a green 0-framed meridian of that red image, is the first frame of our Figure \ref{fig:rbgforyasui}. (We remark that the calculus described in Figure 3 of \cite{HMP19}  is local; the knot $K\subset \partial Z$ used in their Proposition 2.2 can go wherever it wants outside of the region shown in their Figure 3. Our Figure \ref{fig:rbgforyasui} is global and our  $Z\cong B^4$; thus we depict that $K$ may be knotted with the region marked $K$ in Figure \ref{fig:rbgforyasui}.) Appealing to Proposition \ref{prop:RBGslides}, we can tidy up this RBG link via the slides and isotopy marked in Figure \ref{fig:rbgforyasui} to obtain the rather nice simple RBG link in the right frame of Figure \ref{fig:rbgforyasui}.
\begin{figure}
{
   \fontsize{9pt}{11pt}\selectfont
   \def\svgwidth{3.5in}
   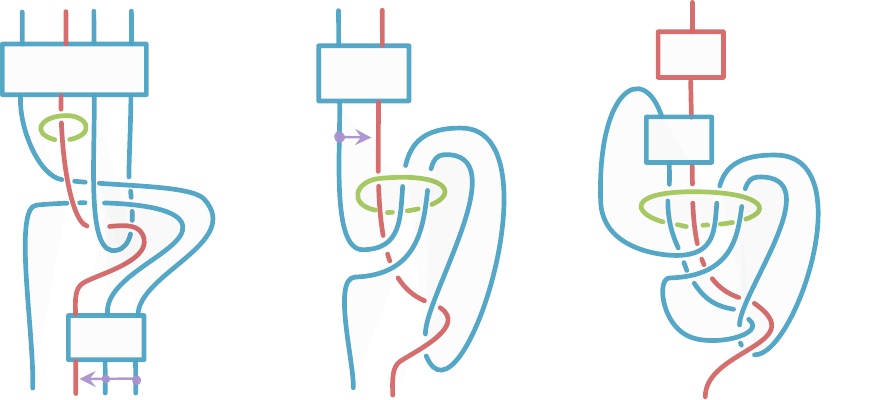
}
\caption{Braid closures of these tangles give RBG links for Yasui's homeomorphism.}\label{fig:rbgforyasui}
\end{figure}

\subsection{Dualizable patterns}
The dualizable patterns technique was developed and utilized in \cite{Akb2Dhom, Lickorish2, Brakes, GM, BakerMotegi, MP17}. In \cite{Pic1} an early version of the RBG construction was developed; in \cite{Pic1} an RBG link is required to have $r=0$, $B \cup R = B \cup\mu_B$, $G \cup R = G \cup \mu_G$ and $lk(B, G) =0$. (We remark that the RBG links studied there have property $U$, but are not necessarily special.) In the appendix to \cite{Pic1}, it is proven that any homeomorphism constructed by the dualizable patterns technique may be presented by an RBG link of the type studied in \cite{Pic1} and the converse: any RBG link of that type gives rise to a dualizable pattern. Instead of including a reproof here, which would require recalling the dualizable patterns construction, we refer the reader to the appendix of that paper.

\bibliographystyle{custom}
\bibliography{biblio}
\addresseshere

\newpage
\captionsetup[subfigure]{font=normalsize, labelformat=empty}
\begin{figure}
\centering
\subfloat[$K_6$]{\includegraphics[scale=.08]{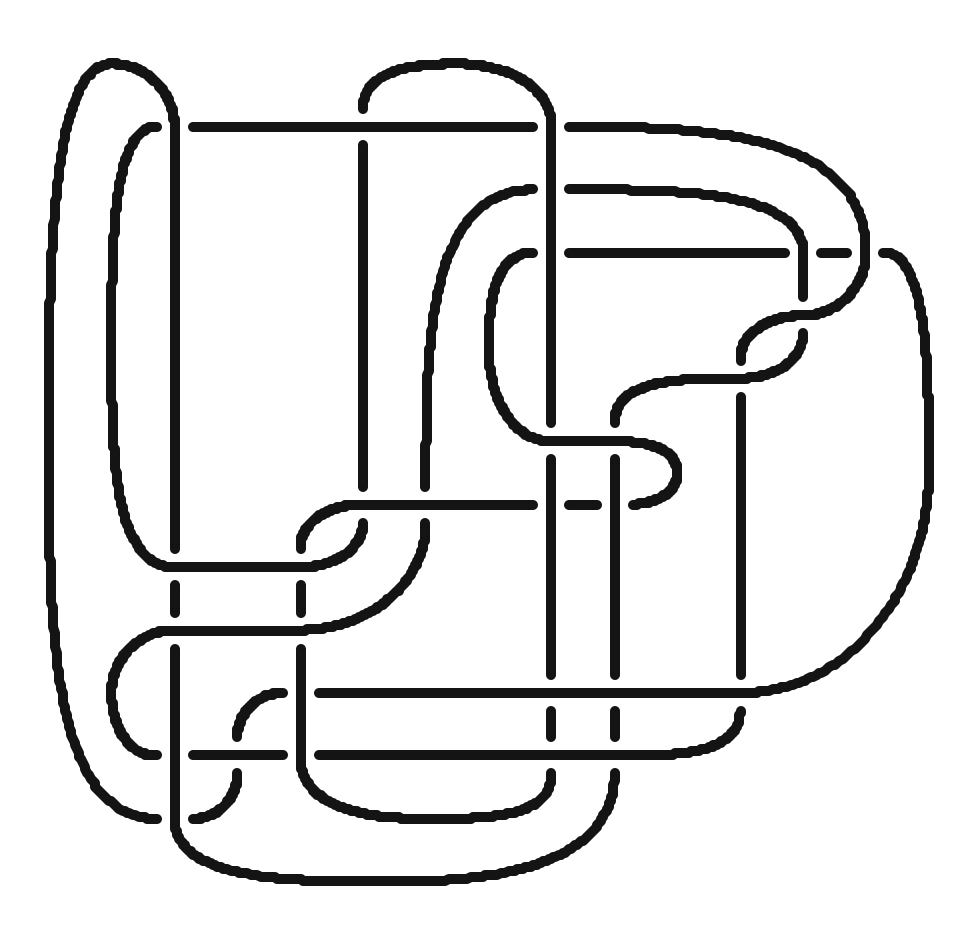}}\quad\quad\quad\quad\quad
\subfloat[$K_7$]{\includegraphics[scale=.08]{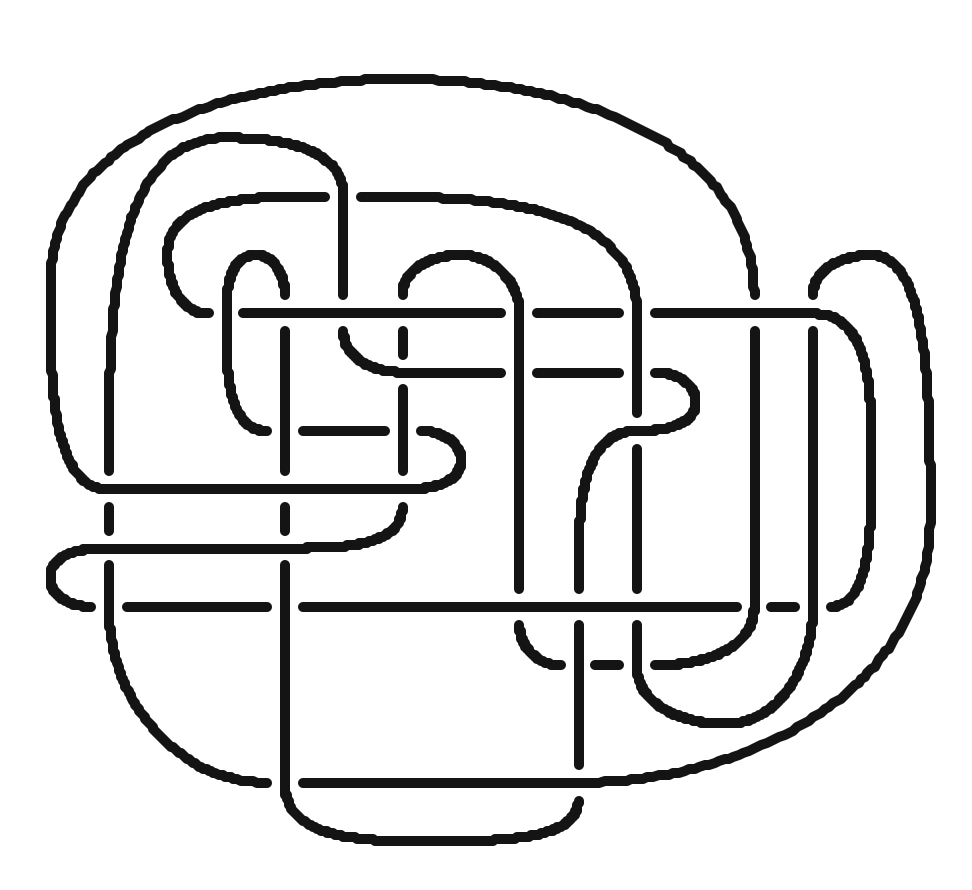}}\quad\quad\quad\quad\quad
\subfloat[$K_8$]{\includegraphics[scale=.08]{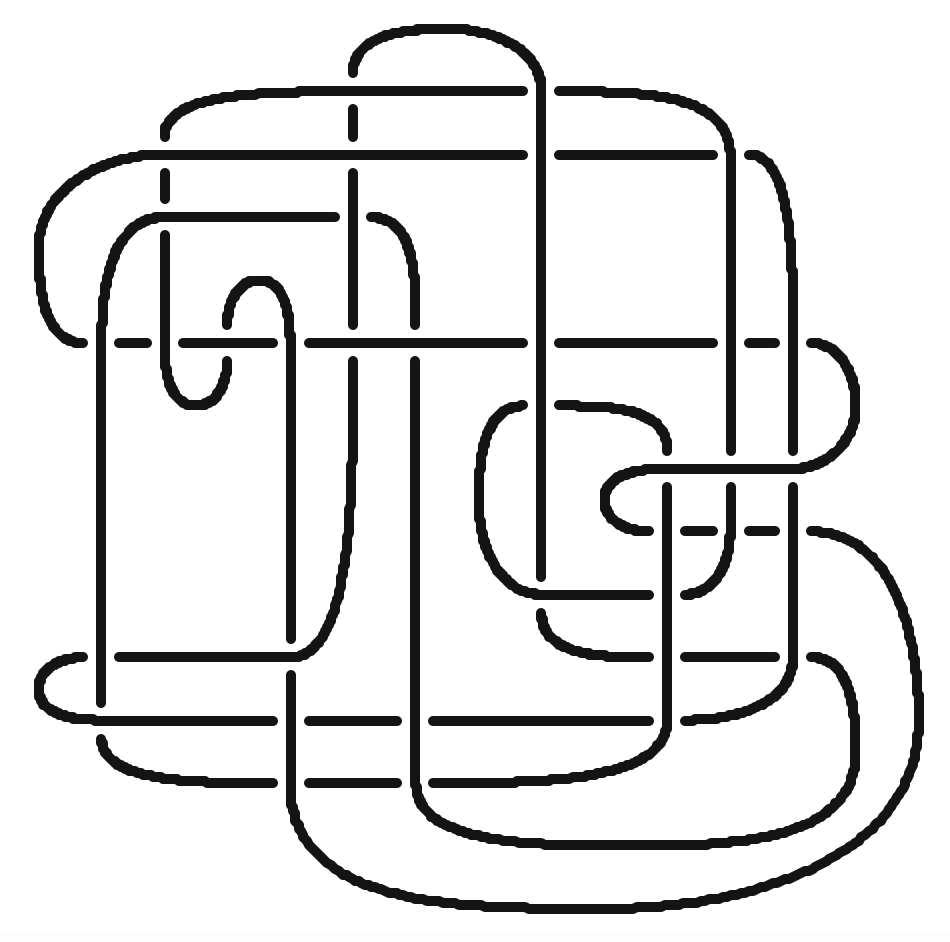}}\\
\subfloat[$K_9$]{\includegraphics[scale=.08]{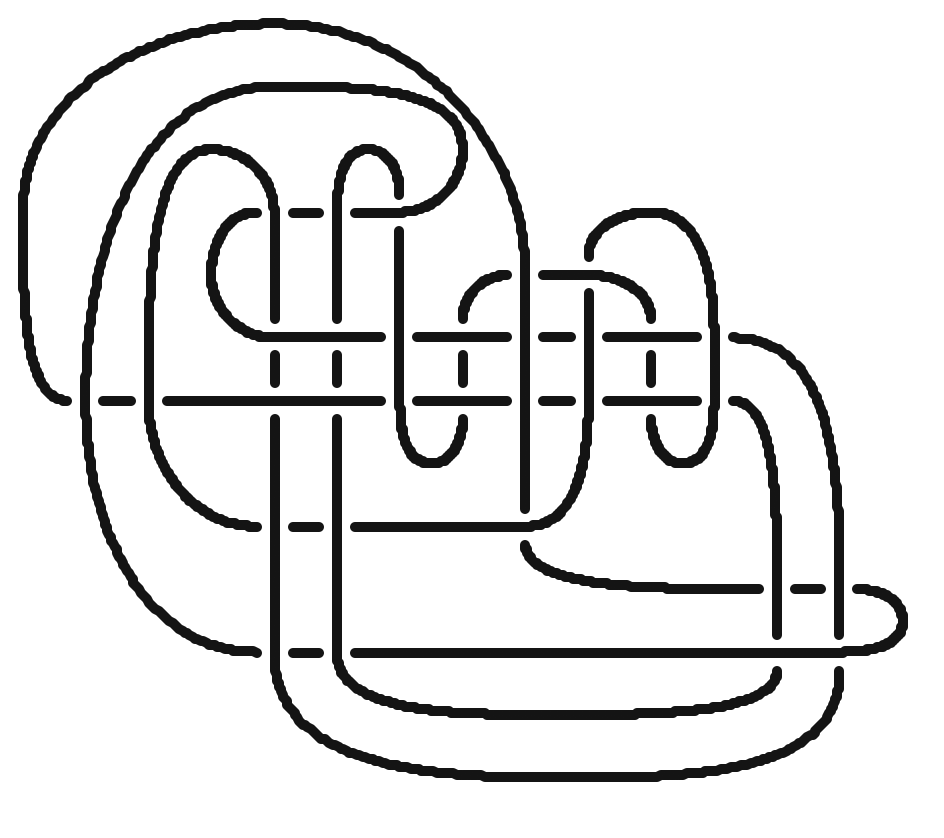}}\quad\quad\quad\quad\quad
\subfloat[$K_{10}$]{\includegraphics[scale=.08]{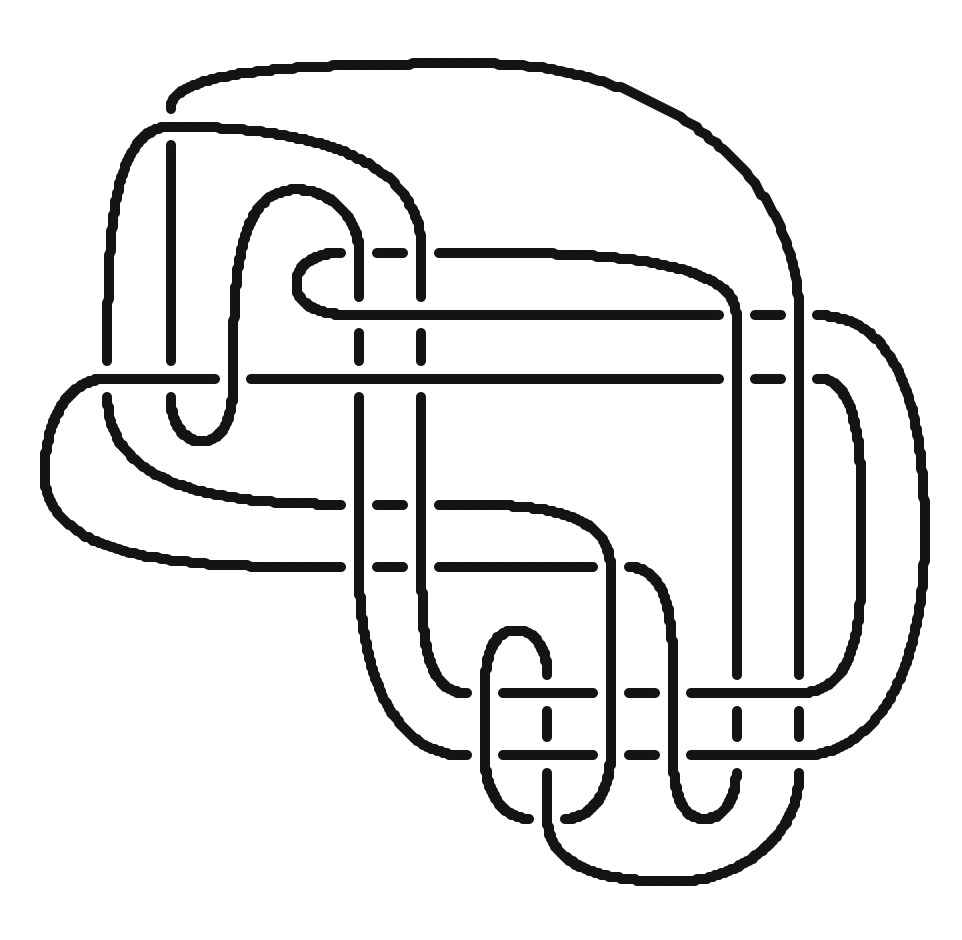}}\quad\quad\quad\quad\quad
\subfloat[$K_{11}$]{\includegraphics[scale=.08]{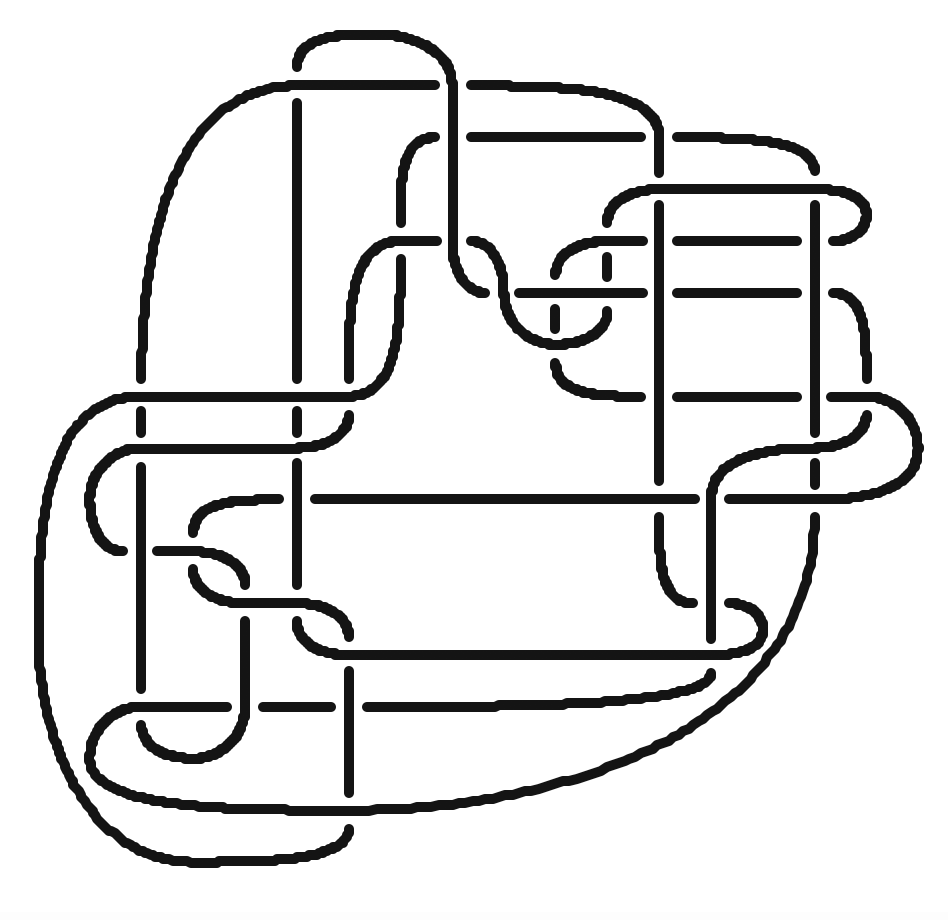}}\\
\subfloat[$K_{12}$]{\includegraphics[scale=.08]{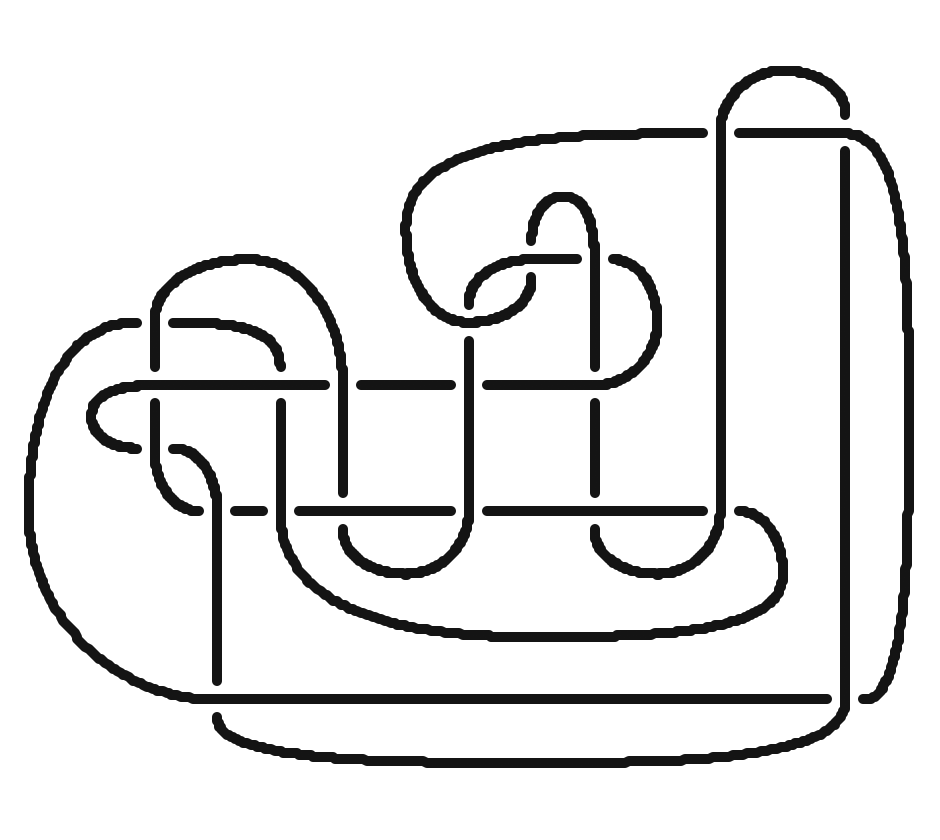}}\quad\quad\quad\quad\quad
\subfloat[$K_{13}$]{\includegraphics[scale=.08]{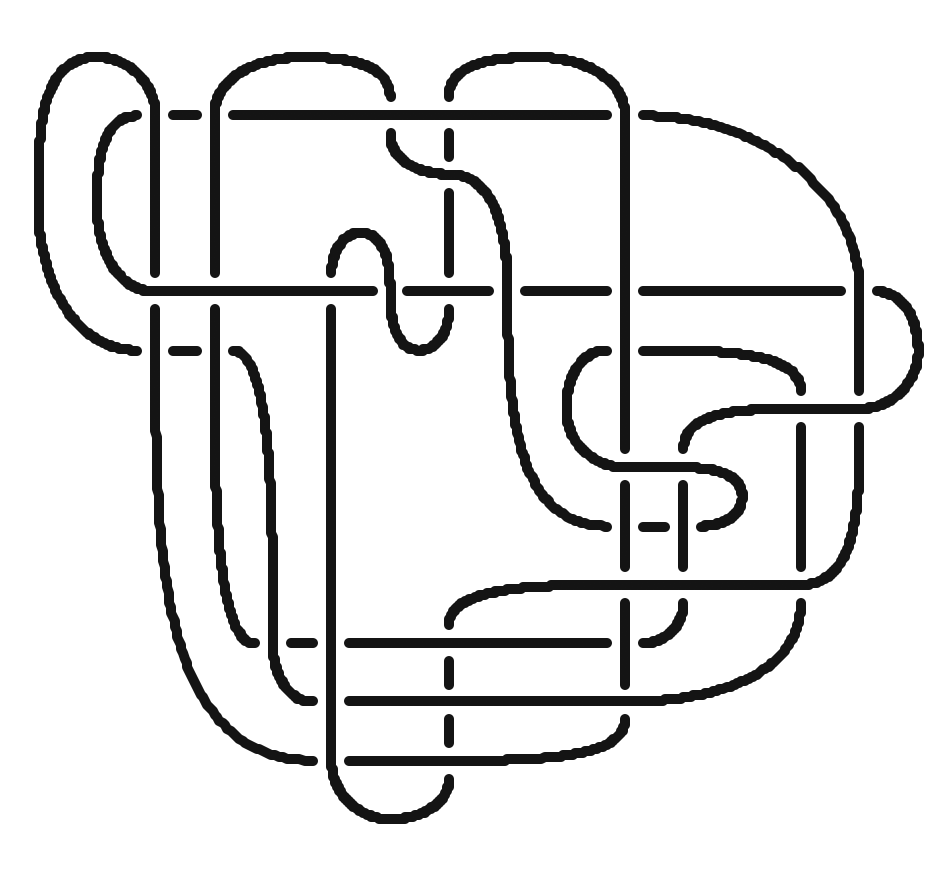}}\quad\quad\quad\quad\quad
\subfloat[$K_{14}$]{\includegraphics[scale=.08]{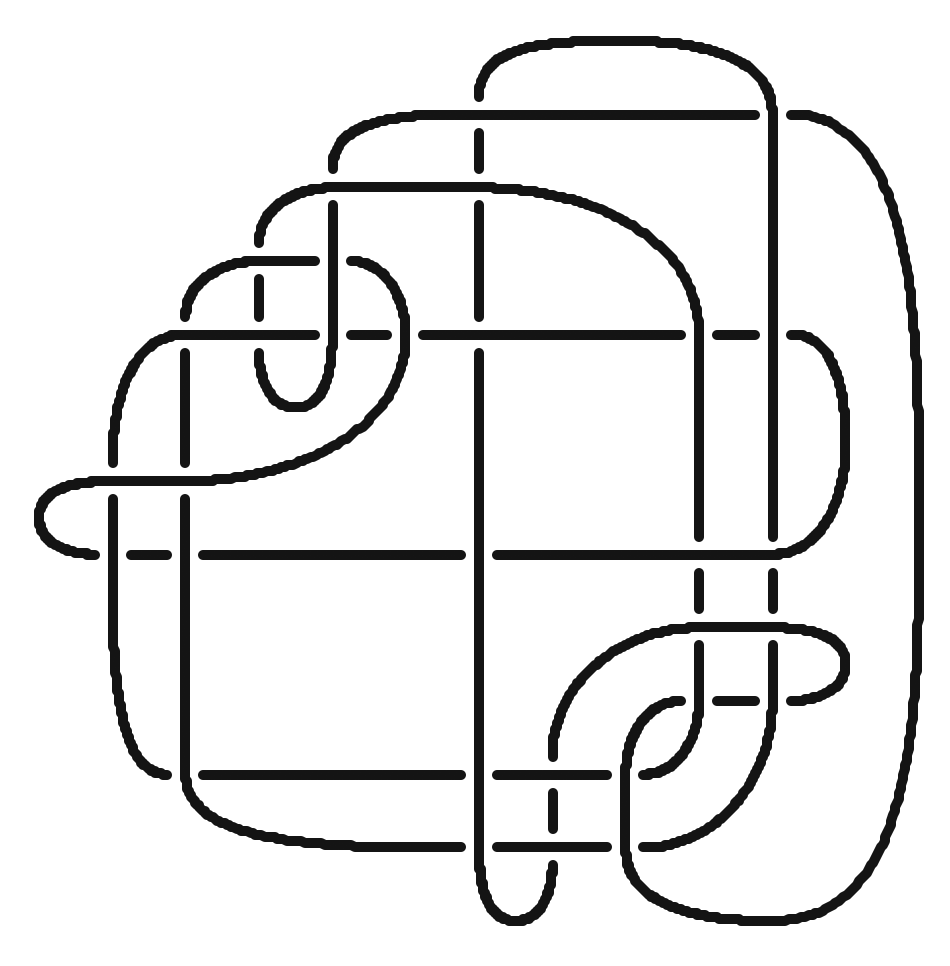}}\\
\subfloat[$K_{15}$]{\includegraphics[scale=.08]{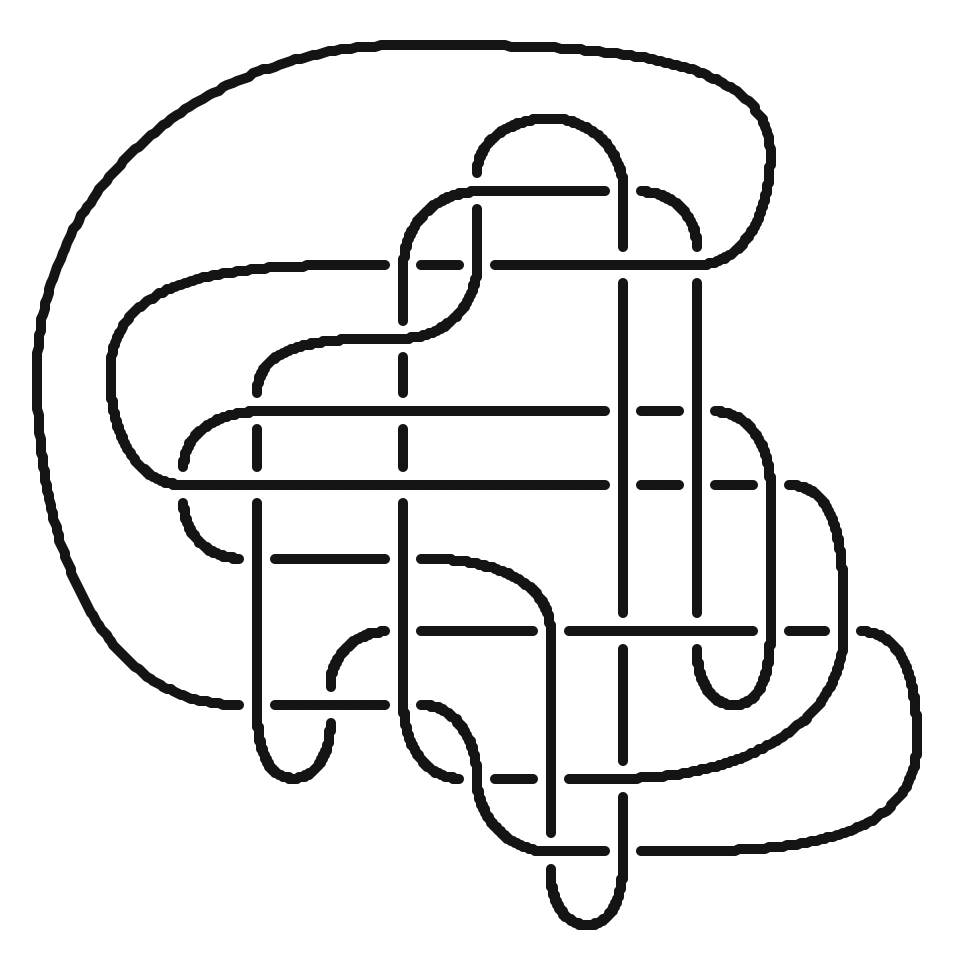}}\quad\quad\quad\quad\quad
\subfloat[$K_{16}$]{\includegraphics[scale=.08]{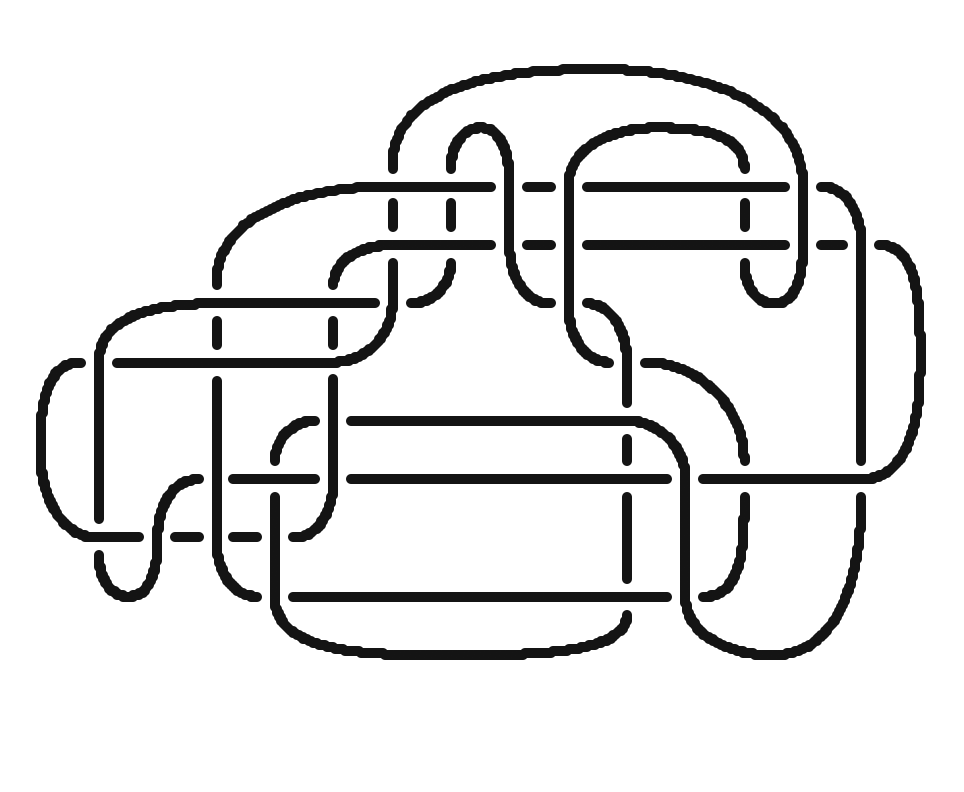}}\quad\quad\quad\quad\quad
\subfloat[$K_{17}$]{\includegraphics[scale=.08]{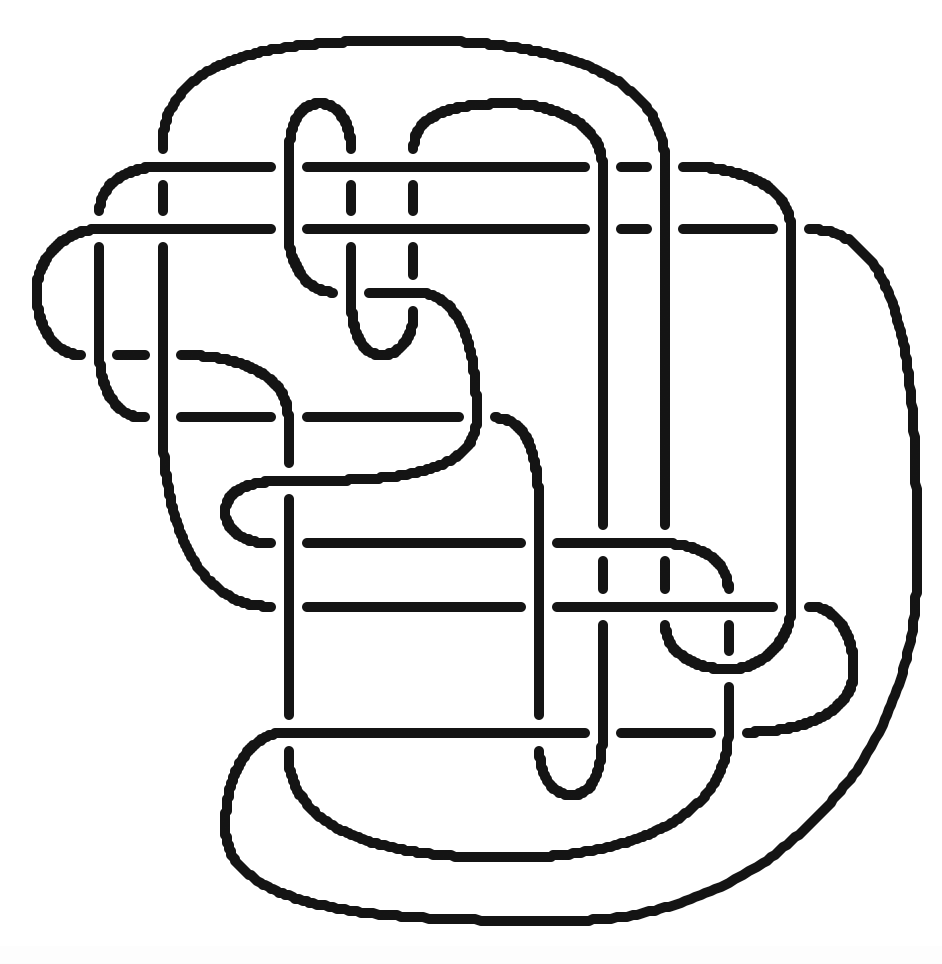}}\\
\subfloat[$K_{18}$]{\includegraphics[scale=.08]{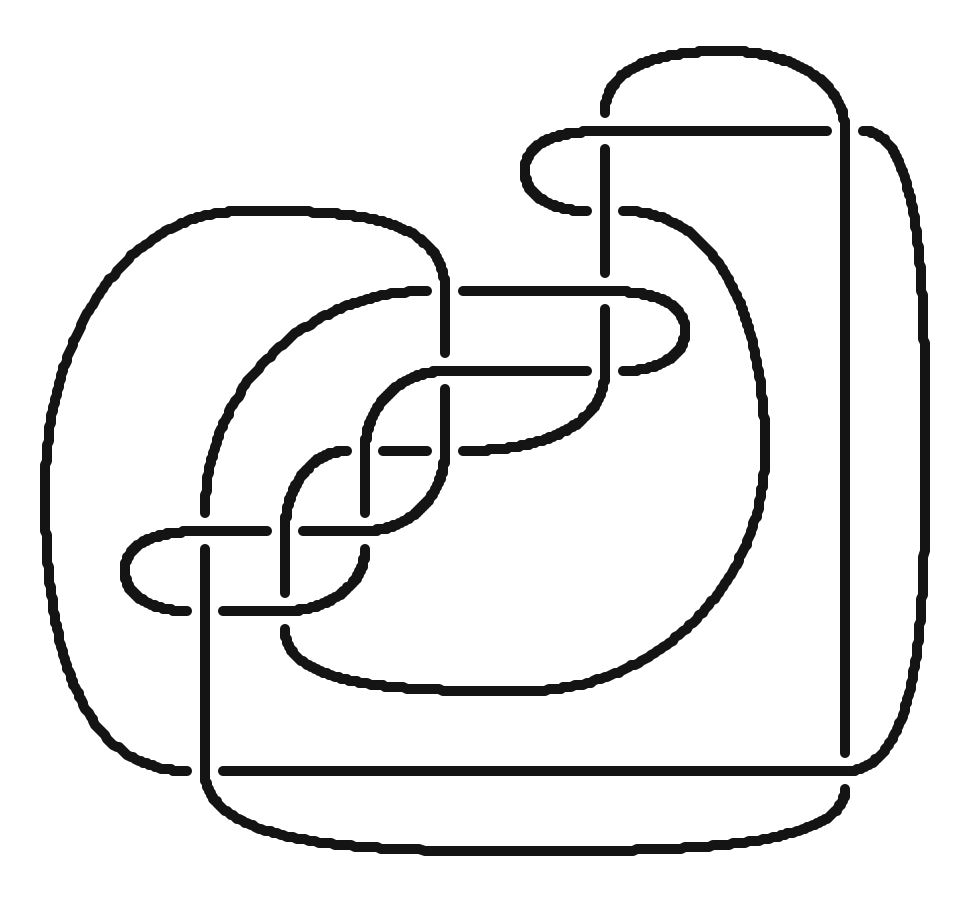}}\quad\quad\quad\quad\quad
\subfloat[$K_{19}$]{\includegraphics[scale=.08]{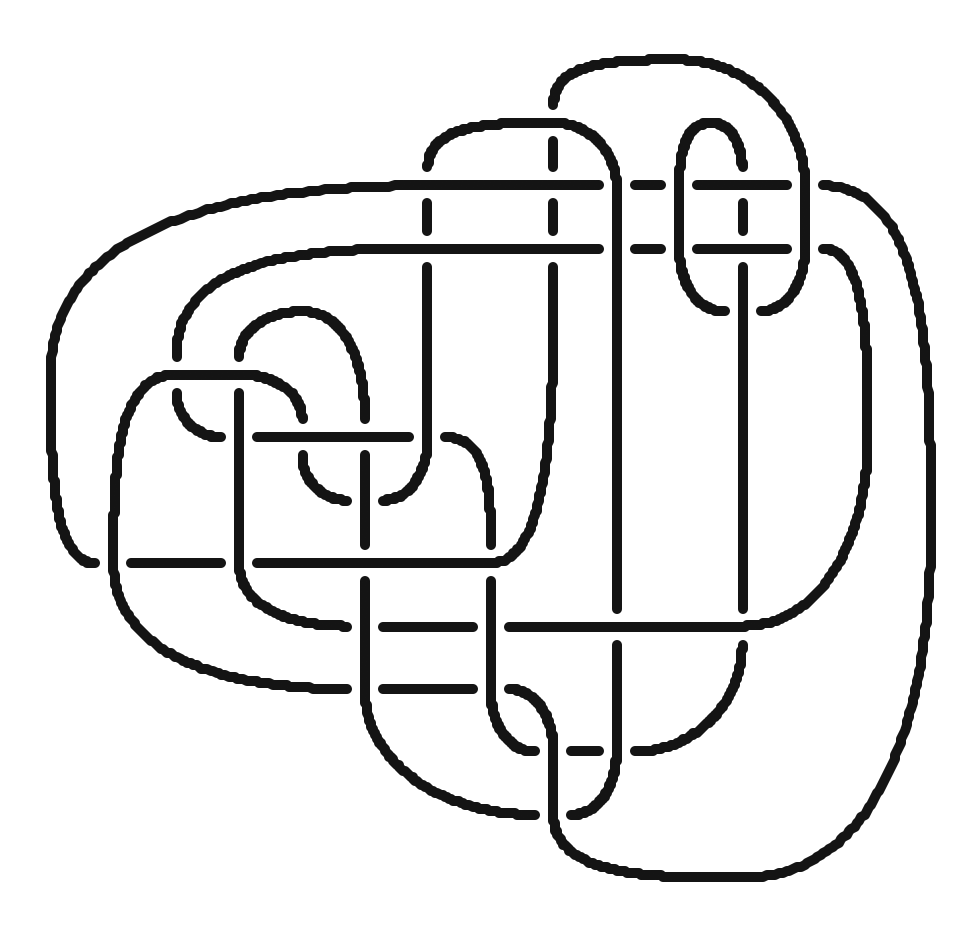}}\quad\quad\quad\quad\quad
\subfloat[$K_{20}$]{\includegraphics[scale=.08]{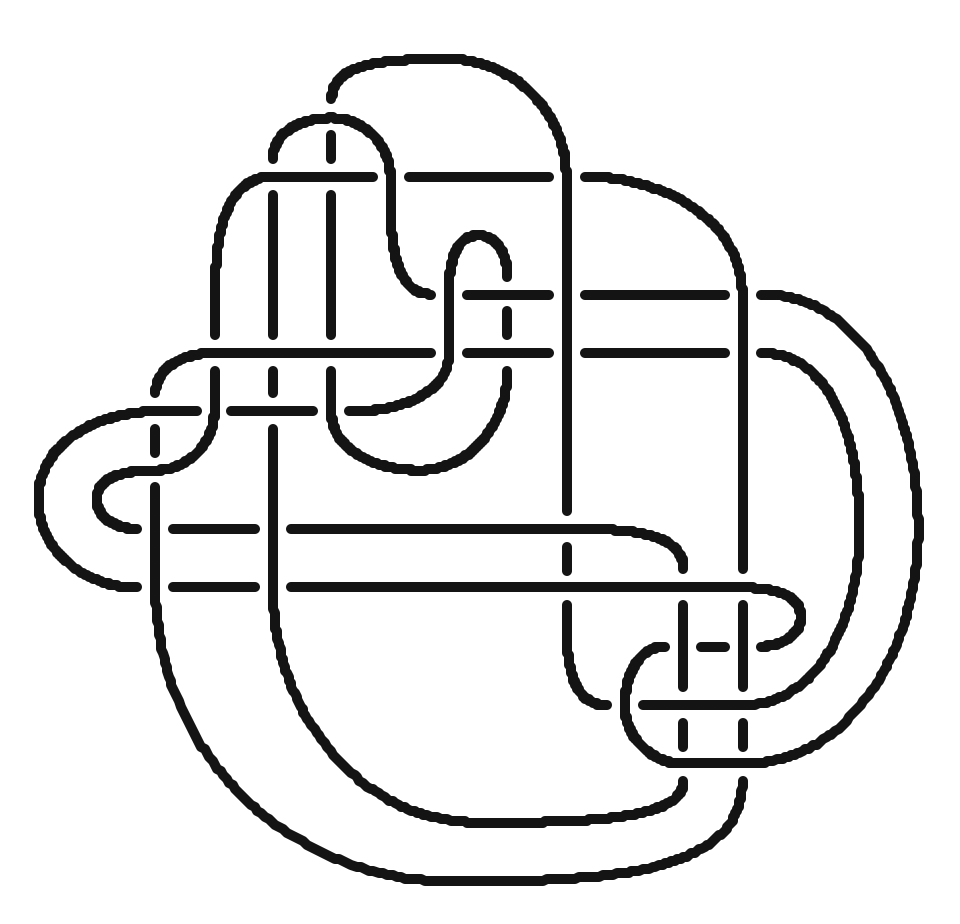}}\\
\subfloat[$K_{21}$]{\includegraphics[scale=.08]{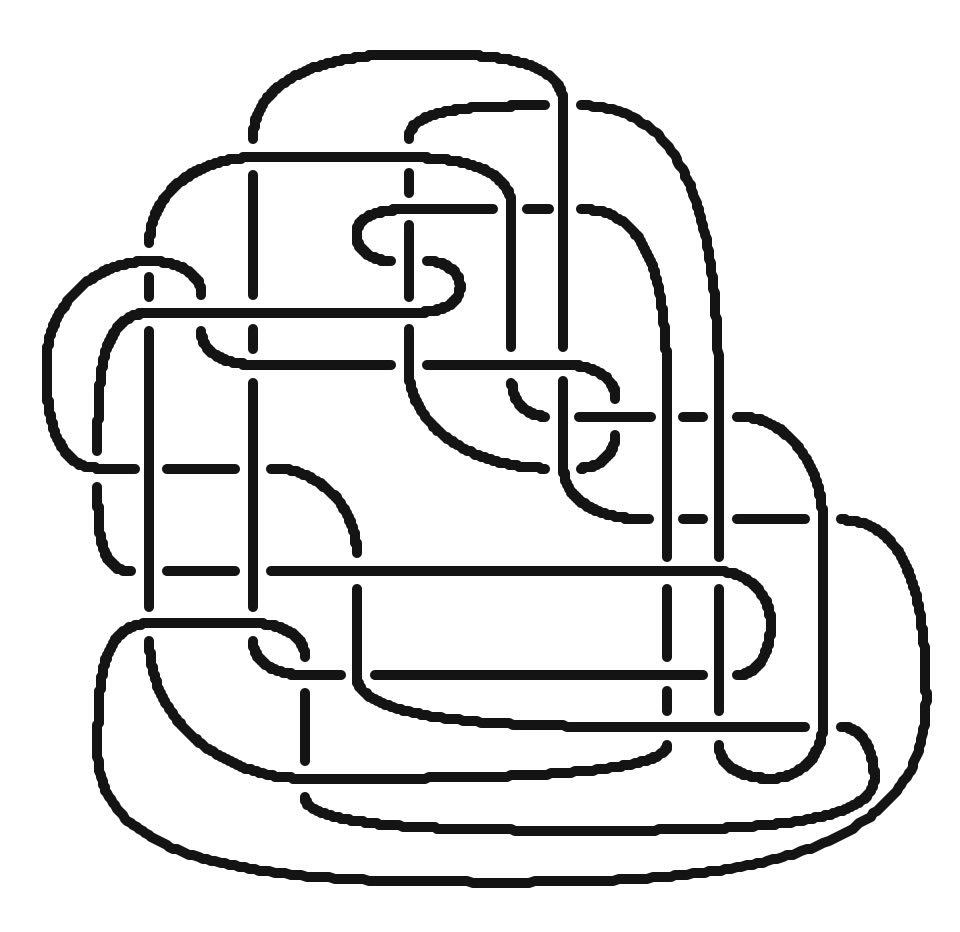}}\quad\quad\quad\quad\quad
\subfloat[$K_{22}$]{\includegraphics[scale=.08]{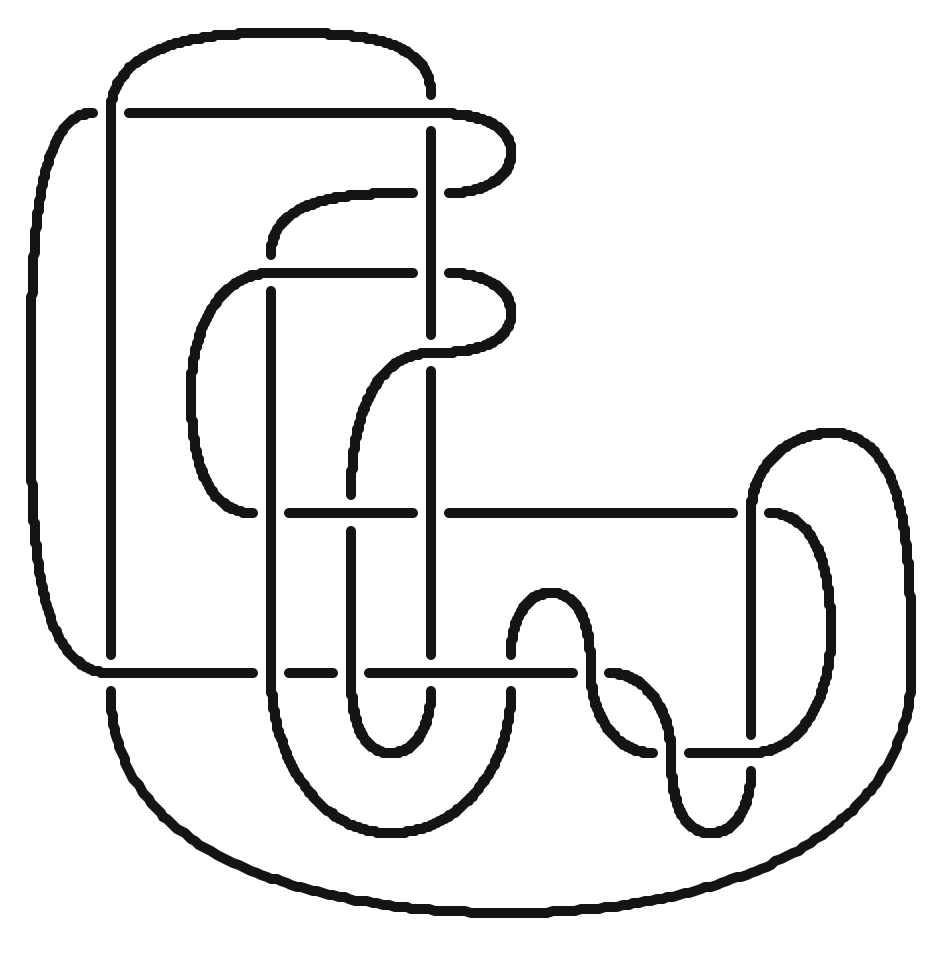}}\quad\quad\quad\quad\quad
\subfloat[$K_{23}$]{\includegraphics[scale=.08]{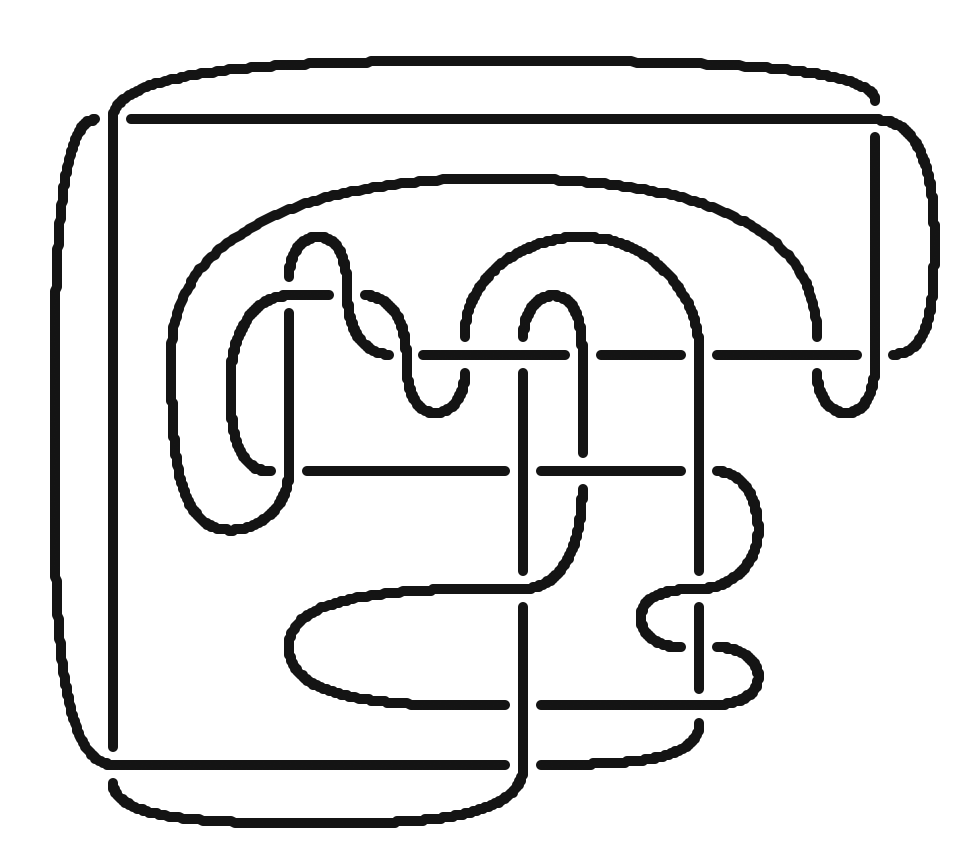}}
\caption{Candidates for BPH-slice knots.}
\label{fig:2}
\end{figure}

\end{document}